\documentclass{amsart}
\usepackage{amsmath}
\usepackage{hyperref}
\usepackage{pgf,tikz}
\usepackage{placeins}
\usepackage[bottom=2cm,top=3cm,left=3cm,right=2cm]{geometry}
\usepackage{ytableau}
\usepackage{bbold}
\usepackage{enumitem}

\newtheorem{theorem}{Theorem}[section]
\newtheorem{lemma}[theorem]{Lemma}
\newtheorem{proposition}[theorem]{Proposition}
\newtheorem{corollary}[theorem]{Corollary}

\newtheorem{main lemma}[theorem]{Main Lemma}

\theoremstyle{definition}
\newtheorem{definition}[theorem]{Definition}
\newtheorem{example}[theorem]{Example}
\newtheorem{remark}[theorem]{Remark}

\DeclareMathOperator{\spn}{\mathrm{span}}

\begin{document}
\title[Cocharacters of $UT_n(E)$]{Cocharacters of $UT_n(E)$}
\author{Lucio Centrone}\address{Dipartimento di Matematica, Universit\`a degli Studi di Bari Aldo Moro, Via Edoardo Orabona, 4, 70125 Bari, Italy}
\email{lucio.centrone@uniba.it, centrone@unicamp.br}
\author{Vesselin Drensky}\address{Institute of Mathematics and Informatics, Bulgarian Academy of Sciences, 1113 Sofia, Bulgaria}\email{drensky@math.bas.bg}
\author{Daniela Martinez Correa}
\address{IMECC, Universidade Estadual de Campinas, Rua S\'ergio Buarque de Holanda, 651 Cidade Universit\'aria ``Zeferino Vaz'' Distr. Bar\~ao Geraldo Campinas, S\~ao Paulo, Brasil, CEP 13083-859}
\email{d190688@dac.unicamp.br}

\keywords{Algebras with polynomial identity; block triangular matrices; Grassmann algebra; cocharacter sequence; multiplicities; multiplicity series}
\subjclass[2020]{16R10; 05A15; 05E05; 05E10; 15A75; 16R40; 20C30}
\thanks{D.M. Correa was partially supported by CAPES-Brazil. Financial Code 001.}
\begin{abstract}
Let $F$ be a field of characteristic $0$ and let $E$ be the infinite dimensional Grassmann algebra over $F$.
In the first part of this paper we give an algorithm  calculating the generating function of the cocharacter sequence
of the $n\times n$ upper triangular matrix algebra $UT_n(E)$ with entries in $E$, lying in a strip of a fixed size.
In the second part we compute the double Hilbert series  $H(E;\mathrm{T}_k,\mathrm{Y}_l)$ of $E$, then we define the $(k,l)$-multiplicity series of any PI-algebra.
As an application, we derive from $H(E;\mathrm{T}_k,\mathrm{Y}_l)$ an easy algorithm determining the $(k,l)$-multiplicity series of $UT_n(E)$.
\end{abstract}

\maketitle

\section{Introduction}
We fix a field $F$ of characteristic $0$ and any algebra over $F$ is considered associative with unit.
Let $X=\{x_1,x_2,\ldots\}$ be a countable set of indeterminates. We denote by $F\langle X\rangle$ the free algebra freely generated by $X$ over $F$.
Let $A$ be an algebra over $F$ satisfying a polynomial identity, i.e., a PI-algebra.
It is well known that its set of polynomial identities $T(A)$ is a $T$-ideal of $F\langle X\rangle$, i.e., an ideal that is invariant under all endomorphisms of $F\langle X\rangle$.\\
\indent
Since $F$ is a field of characteristic 0, all the polynomial identities follow from the multilinear ones.
A famous theorem by Kemer \cite{Ke} says that if $A$ is a PI-algebra, then its $T$-ideal is finitely generated,
but it is important to  recall that the complete set of finite generators of $T$-ideals is well known  only for few algebras.\\
\indent
By a result of Regev \cite{R1}, it seems to be more efficient to study the set of multilinear polynomials which (in a certain sense) are not polynomial identities for a given algebra.
More precisely, if $P_n$ is the vector space of multilinear polynomials in the variables $\{x_1,\ldots,x_n\}$, we study the factor space $P_n(A):=P_n/(P_n\cap T(A))$ for each $n$.
We recall that $P_n$ is also a left $S_n$-module under the canonical left action of the symmetric group $S_n$.
Since $P_n(A)$ inherits the $S_n$-action on $P_n$, it affords an $S_n$-character $\chi_n(A)$ called the \textit{$n$-th cocharacter of $A$}.
The sequence $(\chi_n(A))_{n\in\mathbb{N}}$ is called the \textit{sequence of cocharacters of $A$}.
We also observe that $P_n(A)$ is a finite dimensional vector space which dimension is called the \textit{$n$-th codimension} of $A$ (or in symbol $c_n(A)$)
and the sequence $(c_n(A))_{n\in\mathbb{N}}$ is called the \textit{sequence of codimensions of $A$}.\\
\indent
In \cite{GZ2}, \cite{GZ1}, see also \cite{GZ}, Giambruno and Zaicev proved that there always exists the limit
\[
\text{\rm exp}(A)=\lim_{n\rightarrow\infty}\sqrt[n]{c_n(A)}
\]
and it is a nonnegative integer called the \textit{PI-exponent of $A$}.
If we use the language of varieties, we say that the variety generated by the algebra $A$ is the class
\[
\mathcal{V}=\mathcal{V}(A)=\{\text{\rm $B$ associative algebra}\mid T(A)\subseteq T(B)\}.
\]
We say that the variety of algebra $\mathcal{V}$ is \textit{minimal} with respect to its exponent
if and only if for any proper subvariety $\mathcal{U}$ of $\mathcal{V}$
we have that $\text{\rm exp}(\mathcal{U})<\text{\rm exp}(\mathcal{V})$.
We say that a PI-algebra is \textit{minimal} if it generates a minimal variety.\\
\indent
If $S$ is any commutative ring with 1, we denote by $UT_n(S)$ the ring of upper triangular matrices with entries in $S$.
Let $E$ be the infinite dimensional Grassmann algebra over $F$. Drensky \cite{D3} proved that the $T$-ideals of the algebras $UT_n(F)$ and $UT_n(E)$
are examples of maximal $T$-ideals of a given exponent of the codimension sequences
(and the corresponding varieties of algebras are minimal varieties of this exponent).
Some years before Kemer's works, Genov in \cite{gen1} and \cite{gen2} Genov and Latyshev in \cite{Ly} proved
that every algebra belonging to $\mathcal{V}(UT_n(F))$ has a finite basis of its polynomial identities.
Latyshev in \cite{Ly2} and Popov in \cite{pop} generalized the previous result for PI-algebras satisfying the polynomial identity
\[
[x_1,x_2,x_3]\cdots[x_{3n-2},x_{3n-1},x_{3n}]
\]
which generates the $T$-ideal $T(UT_n(E))=T(E)^n$ of the algebra $UT_n(E)$.
For a long time, until Kemer developed his structure theory, the results of Genov, Latyshev and Popov
covered all known examples of classes of PI-algebras with the finite basis property.\\
\indent
The T-ideals $T(UT_n(F))$ and $T(UT_n(E))$ have another interesting property established by Volichenko and Zalesskii in \cite{VZ}.
Let the algebra $A$ satisfy a multilinear polynomial identity $f(x_1,\ldots,x_m)$ which generates an irreducible $S_m$-module
with character $\chi_{\lambda}$, where $\chi_\lambda$ is the irreducible $S_m$-character associated with the partition $\lambda$.
Then the Young diagram of $\lambda$ contains less than $n$ boxes below of the first row
if and only if $f(x_1,\ldots,x_m)$ does not hold for $UT_n(F)$. Similarly, the Young diagram of $\lambda$ contains less than $n$ boxes to the right of the first column
if and only if $f(x_1,\ldots,x_m)$ does not hold for $UT_n(E)$.
For the Grassmann algebra this means that the algebra $A$ satisfies a standard identity if and only if $T(A)$ is not contained in $T(E)$.
A proof can be found for example in the book by Giambruno and Zaicev \cite[Theorem 7.2.1]{GZ}.

Let
\[
\chi_n(A)=\sum_{\lambda\vdash n}m_{\lambda}(A)\chi_\lambda,\quad n\in\mathbb{N},
\]
be the cocharacter sequence of $A$.
Let us set $X_d:=\{x_1,\ldots,x_d\}$ and let us consider 
\[
F_d(A):=F\langle X_d\rangle/(F\langle X_d\rangle\cap T(A)).
\]
Moreover, if $\mathrm{T}=\{t_1,\ldots,t_d\}$ is a set of commutative variables,
then the Hilbert series $H(F_d(A);T_d)$ of $F_d(A)$ may be decomposed as
\[
H(F_d(A);\mathrm{T}_d)=\sum_{\lambda}m_\lambda(A)S_\lambda(\mathrm{T}_d),
\]
where $\lambda$ is a partition in no more than $d$ parts and $S_\lambda(\mathrm{T}_d)$ is the Schur function associated to $\lambda$ in the variables from $\mathrm{T}_d$.
We shall refer to $H(F_d(A);\mathrm{T}_d)$ as the {\it Hilbert series of $A$} and we shall write $H(A,\mathrm{T}_d)$ instead of $H(F_d(A);\mathrm{T}_d)$.
By a result of Berele and Drensky, (see \cite{B1} and \cite{D1}), the $m_\lambda(A)$'s are the same as in the cocharacter sequence of $A$.
Hence, in principle, the knowledge of the Hilbert series of $A$ will give us the multiplicities $m_\lambda(A)$ of the cocharacter sequence of $A$,
when $\lambda$ is a partition in no more than $d$ parts.
So if $A$ is finite dimensional, working with a sufficiently large set of variables will be enough to capture all the multiplicities. This is no longer true for infinite dimensional algebras. It is also important to recall that Belov proved in \cite{Belov} that the Hilbert series
of the relatively free algebra of a PI-algebra A in $d$ variables is a {\it rational} function.\\
\indent
The explicit form of the multiplicities in the cocharacter sequence of a PI-algebra is known for few cases. Among them are the infinite dimensional Grassmann algebra $E$ (Olsson and Regev \cite{OR}),
the $2\times 2$ matrix algebra $M_2(F)$ (Formanek \cite{for1}  and Drensky \cite{D4}), the algebra $UT_2(F)$ of $2\times 2$ upper triangular matrices
(Mishchenko et al \cite{Mi}, based on the approach of Berele and Regev \cite{BR3}, see also \cite{D2}),
the tensor square $E\otimes E$ of the Grassmann algebra (Popov \cite{pop1}, Carini and Di Vincenzo \cite{CAD}),
the algebra $UT_2(E)$ of $2\times 2$ upper triangular matrices with entries from the  Grassmann algebra $E$ (Centrone \cite{C}),
the algebra $UT_n(F)$ of $n \times n$ upper triangular matrices (Boumova and Drensky \cite{BD}),
the algebra $R_{p,q}(F)$ of upper block triangular $(p+2q)\times (p+2q)$ when $p$ and $q$ are small values (Drensky and Kostadinov \cite{DK}).\\
\indent
In \cite{DG1} Drensky and Genov define the \textit{multiplicity series} of a PI-algebra $A$,
that is the generating function of the cocharacter sequence of $A$ which corresponds to the multiplicities $m_\lambda(A)$ when $\lambda$ is a partition in no more than $d$ parts.
Then, coming back to upper triangular matrices and their central role in PI-theory,
in \cite{BD} Boumova and Drensky found an easy algorithm with input the multiplicity series of a symmetric function, and output the multiplicity series of its Young-derived.
Applying it, they found the explicit form of the multiplicity series of the Hilbert series of $UT_n(F)$.
Following this line of research, in the first part of the paper we work with $UT_n(E)$ and calculate its multiplicity series in $d$ variables.\\
\indent
Due to the fact that $E$ is infinite dimensional, we need more tools than the ones used by Boumova and Drensky in order to know all multiplicities of $UT_n(E)$.
Using the idea of Berele (see \cite{B3}), we work with \textit{double Hilbert series} instead of with Hilbert series of PI-algebras.
Due to the analogue of the result of Berele and Drensky for double Hilbert series, it suffices to study the decomposition of the double Hilbert series of $UT_n(E)$
in order to achieve the explicit form of the cocharacter sequence of $UT_n(E)$.
In the second part of the present paper, we generalize the definition of multiplicity series of a PI-algebra defining a \textit{$(k,l)$-multiplicity series}
which controls three sets of disjoint variables, where $(k,l)$ means that the partitions $\lambda=(\lambda_1,\ldots, \lambda_m)$ satisfy the condition $\lambda_{k+1}\leq l$.
In other words, their young diagrams $D_{\lambda}$ are in a hook of height $k$ of the arm and wide $l$ of the leg.
By a result of Amitsur and Regev \cite{AR} all nonzero multiplicities $m_{\lambda}(A)$ for a PI-algebra $A$ are concentrated for Young diagrams in a sufficiently large hook.
Hence the information about the multiplicities of $A$ is contained in the related with the hook $(k,l)$-multiplicity series.\\
\indent
Then we compute the double Hilbert series of $E$ and, as a consequence, we build up an algorithm with output the $(k,l)$-multiplicity series of $UT_n(E)$.
In the spirit of \cite{C} we compute the $(2,3)$-multiplicity series of $UT_2(E)$, which contains all multiplicities of the cocharacter sequence of $UT_2(E)$
and finally we compute the $(1,1)$-multiplicity series of $UT_3(E)$.


\section{Preliminaries}
\subsection{Symmetric functions}
We fix a positive integer $d$ and consider the algebra
\[
\mathbb{C}[[\mathrm{T}_d]]=\mathbb{C}[[t_1,\ldots, t_d]]
\]
of formal power series in $d$ commutative variables. Let ${\mathbb{C}[[\mathrm{T}_d]]}^{S_d}\subseteq {\mathbb{C}[[\mathrm{T}_d]]}$ be the subalgebra of symmetric functions.
Every symmetric function $g(\mathrm{T}_d)$ can be represented in the form
\[
g(\mathrm{T}_d)=\sum\limits_{\lambda} m_{\lambda}S_{\lambda}(\mathrm{T}_d), m_{\lambda}\in\mathbb{C},\hspace{0.1cm}\lambda=(\lambda_1,\ldots, \lambda_d),
\]
where $S_{\lambda}(\mathrm{T}_d)$ is the Schur function related to the partition $\lambda$  which has at most $d$ parts.
For details on the theory of Schur functions see \cite{M}.\\
\indent
There are several ways to define Schur functions. The most convenient for our purpose is to define them as fractions of Vandermonde-type determinants:
\[
S_{\lambda}(\mathrm{T}_d)= \dfrac{V(\lambda+\delta,\mathrm{T}_d)}{V(\delta,\mathrm{T}_d)},
\]
where $\delta=(d-1,\ldots,2,1)$ and for $\mu=(\mu_1,\ldots, \mu_d)$
\vspace{0.2cm}
\[
V(\mu, \mathrm{T}_d)= \left| \begin{matrix}
t_1^{\mu_1} & t_2^{\mu_1} & \ldots & t_{m-1}^{\mu_1} & t_m^{\mu_1}\\
t_1^{\mu_2} & t_2^{\mu_2} & \ldots & t_{m-1}^{\mu_2} & t_m^{\mu_2}\\
\vdots &  \vdots  &\ddots & \vdots & \vdots\\
t_1^{\mu_{m-1}} & t_2^{\mu_{m-1}} & \ldots & t_{m-1}^{\mu_{m-1}} & t_m^{\mu_{m-1}}\\
t_1^{\mu_m} & t_2^{\mu_m} & \ldots & t_{m-1}^{\mu_m} & t_m^{\mu_m}
\end{matrix}\right|.
\]
Let $\lambda=(\lambda_1,\ldots,\lambda_d)$ be a partition of a natural number.
The Young diagram  $D_{\lambda}$ associated to $\lambda$ is the subset of $\mathbb{Z}\times\mathbb{Z}$ defined as
$D_{\lambda}=\{(i,j)\mid i=1,\ldots, d,\hspace{0.05cm} j=1,\ldots, \lambda_i\}$. Graphically we draw the diagrams replacing the
knots by square boxes, adopting the convention, as with matrices, that the first coordinate $i$ (the row index) increases as one goes downwards,
and the second coordinate $j$ (the column index) increases as one goes from left to right.
The first boxes from the left of each row are one above another and the $i$-th row contains $\lambda_i$ boxes.
We denote by $\lambda'_j$ the length of the $j$-th column of $D_{\lambda}$. The partition $\lambda' = (\lambda'_1 ,\ldots, \lambda'_m)$
and its diagram $D_{\lambda'}$ are called conjugate respectively to $\lambda$ and $D_{\lambda}$.\\
\indent
For the partition $\lambda$, we define a $\lambda$-tableau $T_{\lambda}$ of content $\alpha=\alpha(T_{\lambda})=(\alpha_1,\ldots,\alpha_d)$
if each integer $i=1,\ldots, d$ appears in the tableau exactly $\alpha_i$ times.
Recall that the $\lambda$-tableau $T_{\lambda}$ is semistandard if its entries do not decrease in rows reading from left to  right,
and increase strictly in columns reading from top to bottom.\\
\indent
Another definition of Schur functions is given in terms of semistandard Young tableaux:
\[
S_{\lambda}({\mathrm{T}_d})=\sum\mathrm{T}_d^{{\alpha}(T_{\lambda})},
\]
where the summations runs on all semistandard $\lambda$-tableaux.\\

We recall the definition of elementary symmetric polynomials. 
Given $0\leq m\leq d$, the $m$-th elementary symmetric polynomial in $d$ variables $t_1,\ldots, t_d$ is defined by
\[
e_m(\mathrm{T}_d)=\sum\limits_{1\leq i_1<\cdots<i_m\leq d}t_{i_1}\cdots t_{i_m}.
\]
These polynomials will be used  several times throughout this work.
It can be easily proved  that $e_{m}(\mathrm{T}_d)=S_{(1^m)}(\mathrm{T}_d)$.\\

For each partition $\mu$ of $n$ we denote by $M_{\mu}$ and $\chi_{\mu}$ the corresponding irreducible $S_n$-module and its character, respectively.
A very useful tool for the development of this work is the \textit{Young Rule}.
This rule describes in the language of Young diagrams the induced $S_{m+n}$-characters of the $S_{m}\times S_n$-characters
$\chi_{(m)}\otimes \chi_{\mu}$ and $\chi_{(1^m)}\otimes \chi_{\mu}$, $\mu\vdash n$,
associated to the modules $M_{(m)}\otimes M_{\mu}$ and $M_{(1^m)}\otimes M_{\mu}$. The induced $S_{m+n}$-characters of this modules are  denote by
We denote them by
$\chi_{(m)}\widehat{\otimes} \chi_{\mu}$ and $\chi_{(1^m)}\widehat{\otimes} \chi_{\mu}$, respectively.
In the special case $m=1$, the Young rule is equivalent to the \textit{Branching Rule} for the induced $S_{n+1}$-character of $\chi_{\mu}$, $\mu\vdash n$.
Translated in the language of the Schur functions, the Young rule can be stated as follows:\\

\noindent \textbf{Case 1.} Let $\mu=(\mu_1,\ldots, \mu_d)$ be a partition and $m\in\mathbb{N}$. Then
\[
S_{(m)}(\mathrm{T}_d)S_{\mu}(\mathrm{T}_d)=\sum\limits_{\lambda} S_{\lambda}(\mathrm{T}_d)
\]
where the summation runs over all partitions $\lambda$ such that
\[
\lambda_1+\cdots +\lambda_d=\mu_1+\cdots +\mu_d+m,
\]
\[
\lambda_1\geq \mu_1\geq \lambda_2\geq \mu_2\geq\cdots\geq\lambda_d\geq\mu_d.
\]
This means that the Young diagrams $D_{\lambda}$ are obtained from the diagram $D_{\mu}$ by adding $m$ boxes
so that two new boxes cannot lie in the same column of $D_{\lambda}$.\\

\noindent \textbf{Case 2.} Let $\mu=(\mu_1,\ldots, \mu_d)$ and $(1^m)$ be partitions with $m\leq d$. Then
\[
S_{(1^m)}(\mathrm{T}_d)S_{\mu}(\mathrm{T}_d)=\sum\limits_{\lambda} S_{\lambda}(\mathrm{T}_d)
\]
where the summation is over all partitions $\lambda$ such that
\[
\lambda_1+\cdots +\lambda_d= \mu_1+\cdots +\mu_d+ m,
\]
\[
\mu_i=\lambda_i+ \varepsilon_i, \varepsilon_i=0,1.
\]
In other words, the Young diagrams $D_\lambda$ are obtained from the diagram $D_{\mu}$ by adding $m$ boxes so that new boxes are not allowed to lie in the same row.\\

Let  $g(\mathrm{T}_d)=\sum\limits_{\lambda} m_{\lambda}S_{\lambda}(\mathrm{T}_d)$ be a symmetric function, then we define its \textit{multiplicity series} as
\[
M(g;\mathrm{T}_d)=\sum\limits_{\lambda} m_{\lambda}\mathrm{T}_d^{\lambda}=\sum\limits_{\lambda}m_{\lambda}t_1^{\lambda_1}\cdots  t_d^{\lambda_d}\in\mathbb{C}[[\mathrm{T}_d]].
\]
It is also convenient to consider the subalgebra $\mathbb{C}[[\mathrm{V}_d]]\subseteq\mathbb{C}[[\mathrm{T}_d]]$ of the formal power series in the new set of variables $\mathrm{V}_d=\{v_1,\ldots, v_d\}$, where
\[
v_1=t_1, v_2=t_1t_2 ,\ldots, v_d=t_1\cdots t_d.
\]
Then the multiplicity series $M(g;\mathrm{T}_d)$ can be written as
\[
M'(g;\mathrm{V}_d)=\sum\limits_{\lambda} m_{\lambda}v_1^{\lambda_1-\lambda_2}\cdots v_{d-1}^{\lambda_{d-1}-\lambda_{d}}v_d^{\lambda_d}\in\mathbb{C}[[\mathrm{V}_d]].
\]
We also call $M'(g;\mathrm{V}_d)$ the multiplicity series of $g$.
The advantage of the mapping $M': {\mathbb{C}[[\mathrm{T}_d]]}^{S_d}\longrightarrow\mathbb{C}[[\mathrm{V}_d]]$
defined by $g(\mathrm{T}_d)\rightarrow M'(g;\mathrm{V}_d)$ is that it is a bijection.
\\
\indent
The functions $g(\mathrm{T}_d)\in\mathbb{C}[[\mathrm{T}_d]]^{S_d}$ and $M(g;\mathrm{T}_d)$ are related by the following equality.

\begin{lemma}[Berele \cite{Berel2}]
If
\[
g(\mathrm{T}_d)\prod\limits_{i<j}(t_i-t_j)=\sum_{p_i\geq 0} b(p_1,\ldots,p_d) t_1^{p_1}\cdots t_d^{p_d},\quad b(p_1,\ldots, p_d)\in\mathbb{C},
\]
then
\[
M(g;\mathrm{T}_d)=\dfrac{1}{t_1^{d-1}t_2^{d-2}\cdots t_{d-1}}\sum\limits_{p_i>p_{i+1}}b(p_1,\ldots,p_d)t_1^{p_1}\cdots t_d^{p_d}
\]
where the summations in the latter equation runs on all $p=(p_1,\ldots,p_d)$ such that $p_1>p_2\cdots >p_d$.
\end{lemma}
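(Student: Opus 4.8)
The plan is to expand $g$ in the Schur basis and then compare, term by term, only the monomials with strictly decreasing exponent vectors on the two sides of the asserted identity. Writing $g(\mathrm{T}_d)=\sum_\lambda m_\lambda S_\lambda(\mathrm{T}_d)$ and invoking the determinantal definition $S_\lambda(\mathrm{T}_d)=V(\lambda+\delta,\mathrm{T}_d)/V(\delta,\mathrm{T}_d)$ with $\delta=(d-1,d-2,\ldots,1,0)$, together with the elementary fact that $V(\delta,\mathrm{T}_d)=\prod_{i<j}(t_i-t_j)$, the hypothesis rewrites as
\[
\sum_{p_i\ge 0}b(p_1,\ldots,p_d)t_1^{p_1}\cdots t_d^{p_d}=g(\mathrm{T}_d)\prod_{i<j}(t_i-t_j)=\sum_\lambda m_\lambda V(\lambda+\delta,\mathrm{T}_d).
\]
Thus the coefficients $b(p)$ are the coefficients of $\sum_\lambda m_\lambda V(\lambda+\delta,\mathrm{T}_d)$, and everything reduces to understanding which monomials of this series have strictly decreasing exponents.

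First I would expand each Vandermonde determinant as $V(\mu,\mathrm{T}_d)=\sum_{\sigma\in S_d}\mathrm{sgn}(\sigma)\,t_1^{\mu_{\sigma(1)}}\cdots t_d^{\mu_{\sigma(d)}}$, so that a typical monomial arising from $V(\lambda+\delta,\mathrm{T}_d)$ carries the exponent vector $\bigl((\lambda+\delta)_{\sigma(1)},\ldots,(\lambda+\delta)_{\sigma(d)}\bigr)$. The crucial observation is that $\lambda+\delta=(\lambda_1+d-1,\lambda_2+d-2,\ldots,\lambda_d)$ is \emph{strictly} decreasing, since $\lambda$ is weakly decreasing and $\delta$ is strictly decreasing. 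Because its entries are pairwise distinct, a rearrangement of them is strictly decreasing exactly when $\sigma(1)<\cdots<\sigma(d)$, i.e.\ when $\sigma$ is the identity. Hence the unique monomial of $V(\lambda+\delta,\mathrm{T}_d)$ with a strictly decreasing exponent vector is $t_1^{\lambda_1+d-1}t_2^{\lambda_2+d-2}\cdots t_{d-1}^{\lambda_{d-1}+1}t_d^{\lambda_d}$, occurring with coefficient $\mathrm{sgn}(\mathrm{id})=+1$.

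Next I would record the bijection between partitions $\lambda$ in at most $d$ parts and strictly decreasing tuples $p=(p_1>\cdots>p_d\ge 0)$ of nonnegative integers, given by $p=\lambda+\delta$ with inverse $\lambda=p-\delta$; here $p-\delta$ is a genuine partition precisely because $p_i>p_{i+1}$ forces $(p-\delta)_i\ge(p-\delta)_{i+1}$ while $p_d\ge 0$ gives nonnegativity. Matching the strictly decreasing parts of the two series from the first display then yields
\[
\sum_{p_1>\cdots>p_d}b(p_1,\ldots,p_d)t_1^{p_1}\cdots t_d^{p_d}=\sum_\lambda m_\lambda\,t_1^{\lambda_1+d-1}t_2^{\lambda_2+d-2}\cdots t_{d-1}^{\lambda_{d-1}+1}t_d^{\lambda_d}.
\]
Factoring $t_1^{d-1}t_2^{d-2}\cdots t_{d-1}$ out of the right-hand side leaves exactly $\sum_\lambda m_\lambda t_1^{\lambda_1}\cdots t_d^{\lambda_d}=M(g;\mathrm{T}_d)$, and dividing through produces the claimed formula.

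The main obstacle, and indeed the only step demanding genuine care, is the extraction argument: one must verify both that no non-identity permutation in any $V(\lambda+\delta,\mathrm{T}_d)$ ever contributes a strictly decreasing monomial, and that distinct partitions give distinct tuples $\lambda+\delta$, so that restricting to strictly decreasing exponents is an honest term-by-term identification with the diagonal $\sigma=\mathrm{id}$ contributions. Both facts are consequences of the strict monotonicity of $\lambda+\delta$; once this is in place the remaining steps are purely formal rearrangements of power series.
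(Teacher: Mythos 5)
Your proof is correct. The paper itself states this lemma without proof, citing Berele, and your argument — expanding $g$ in the Schur basis, using the bialternant formula $S_\lambda = V(\lambda+\delta)/V(\delta)$ so that $g\prod_{i<j}(t_i-t_j)=\sum_\lambda m_\lambda V(\lambda+\delta)$, and then observing that the strictly decreasing exponent vectors pick out exactly the identity-permutation terms $t^{\lambda+\delta}$ — is precisely the standard argument behind the cited result, with the two key verifications (no non-identity permutation yields a strictly decreasing monomial, and $\lambda\mapsto\lambda+\delta$ is a bijection onto strictly decreasing tuples) carried out correctly.
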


In the general case, it is difficult to find $M(g;\mathrm{T}_d)$ even if we know $g(\mathrm{T}_d)$. But it is very easy to check whether the formal power series
\[
h(\mathrm{T}_d)=\sum\limits h(q_1,\ldots, q_d)t_1^{q_1}\cdots t_d^{q_d},  \quad q_1\geq \cdots \geq q_d,
\]
is equal to the multiplicity series $M(f;\mathrm{T}_d)$ of $f(\mathrm{T}_d)$ because $h(\mathrm{T}_d)=M(f;\mathrm{T}_d)$ if and only if
\[
f(\mathrm{T}_d)\prod\limits_{i<j}(t_i-t_j)=\sum\limits_{\sigma\in S_{d}}t_{\sigma(1)}^{d-1}t_{\sigma(2)}^{d-2}\cdots t_{\sigma(d-1)}h(t_{\sigma(1)},\ldots,t_{\sigma(d)}).
\]
The latter equation can be used to verify the computational results on multiplicities.
\subsection{PI-algebras}
Let $F$ be a field of characteristic $0$, let $A$ be an associative $F$-algebra with unity, and let $X=\{x_1,x_2,\ldots\}$ be a countable set of variables.
We denote by $F\langle X\rangle$ the free associative algebra generated by $X$ over $F$ and its elements are called \textit{polynomials}.
Let $T(A)$ be the intersection of the kernels of all homomorphisms $F\langle X\rangle\rightarrow A$. Then $T(A)$ is a two-sided ideal of $F\langle X\rangle$
and its elements are called \textit{polynomial identities} of the algebra $A$. If $T(A)$ is not trivial, then $A$ is said to be a \textit{PI-algebra}.
Note that $T(A)$ is stable under the action of any endomorphism of $F\langle X\rangle$.
Any ideal of the algebra $F\langle X\rangle$  which satisfies such property is said to be a \textit{T-ideal}.
Clearly, any T-ideal $I$ is the ideal of the polynomial identities of the algebra $F\langle X\rangle/I$.
From a celebrated theorem of Kemer \cite{Ke}, it is know that in characteristic zero every
$T$-ideal is finitely generated.\\
\indent
If $n\in\mathbb{N}$, then the vector space
\[
P_n:=\textnormal{span}_F \{ x_{\sigma(1)}\cdots x_{\sigma(n)}\mid \sigma\in S_{n}\}
\]
is called the space of \textit{multilinear polynomials of degree $n$}.\\
Since the characteristic of the field $F$ is zero, the standard process of multilinearization shows that $T(A)$ is generated, as a $T$-ideal,
by the subspaces $P_n\cap T(A)$. Actually, it is more efficient to study the quotient space
\[
P_n(A):= P_n/(P_n\cap T(A)).
\]
When $n$ is sufficiently large, the dimension of  $P_n\cap T(A)$ grows factorially, but Regev proved in \cite{R1} that the dimension of $P_n(A)$ grows at most exponentially.
An effective tool to the study of $P_n(A)$ is provided by the representation theory of the symmetric group.
Indeed, one can notice that $P_n(A)$ is an $S_n$-module with the natural left action
and $P_n\cap T(A)$ is an $S_n$-submodule of $P_n$. Hence $P_n(A)$ is an $S_n$-module too.
We shall denote by $\chi_n(A)$  the $S_n$-character of $P_n(A)$ and call it as $n$-th \textit{cocharacter of} $A$.\\
Since we are working in characteristic zero, the representations of symmetric group are totally reducible. Hence
\[
\chi_n(A)=\sum\limits_{\lambda\vdash n} m_{\lambda}(A)\chi_{\lambda}
\]
where the summation runs on all partitions $\lambda$ of $n$, $\chi_{\lambda}$ is the corresponding irreducible character of the symmetric group $S_n$
and $m_{\lambda}(A)\in\mathbb{Z}_{\geq 0}$. The integers $m_{\lambda}(A)$ are called  \textit{multiplicities}.\\
The $n$-th cocharacter $\chi_n(A)$ is related with another important group action,
namely the action of the general linear group $GL_d=GL_d(F)$ on the relatively free algebra $F_d(A)$ of rank $d$ in the variety of algebra $\mathcal{V}(A)$, where
\[
F_d(A)= F\langle X_d\rangle/ (F\langle X_d\rangle\cap T(A))= F\langle x_1,\ldots, x_d\rangle/  (F\langle x_1,\ldots, x_d\rangle\cap T(A)).
\]
The algebra $F_d(A)$ is $\mathbb{Z}^d$-graded with grading defined by
\[
\deg(x_1)=(1,0,\ldots,0),\deg(x_2)=(0,1,\ldots,0),\ldots, \deg(x_d)=(0,0,\ldots,1).
\]
The Hilbert series of $F_d(A)$
\[
H(A;\mathrm{T}_d):=\sum\limits_{n_i\geq 0}\dim(F_d^{(n_1,\ldots,n_d)}(A))t_1^{n_1}\cdots t_d^{n_d}
\]
where $F_d^{(n_1,\ldots, n_d)}(A)$ is the homogeneous component of multi-degree $(n_1,\ldots, n_d)$ of $F_d(A)$,
is a symmetric function which plays the role of the character of the corresponding $GL_d$-representation.
The Schur functions $S_{\lambda}(\mathrm{T}_d)$ are the characters of the irreducible $GL_d$-submodules $W_d(\lambda)$ of $F_d(A)$, therefore
\[
H(A;\mathrm{T}_d)=\sum\limits_{\lambda} m'_{\lambda}(A)S_{\lambda}(\mathrm{T}_d),
\]
where the summation runs on all partitions $\lambda$ with no more than $d$ parts.  By a result of Berele \cite{B1}  and Drensky \cite{D1},
$m'_{\lambda}(A)=m_{\lambda}(A)$. Hence, in principle, if  we know the Hilbert series $H(A;\mathrm{T}_d)$,
we can find the  multiplicities $m_{\lambda}(A)$ for those $\lambda$ with no more than $d$ parts.\\
Therefore, if $A$ is a PI-algebra, we have
\[
H(A;\mathrm{T}_d)=\sum\limits_{\lambda} m_{\lambda}(A)S_{\lambda}(\mathrm{T}_d),
\]
where $\lambda$ has at most $d$ parts and $m_{\lambda}(A)$ is the multiplicity corresponding to $\chi_{\lambda}$ in the cocharacter sequence of $A$.\\

The following proposition expresses the Hilbert series of the product of two $T$-ideals in terms of the Hilbert series of the factors,
and gives the corresponding relations for the Hilbert series of the relatively free algebras.

\begin{proposition}[Formanek \cite{for}, Halpin \cite{hal}]\label{Formanek}
Let $A_1$, $A_2$ and $A$ be PI-algebras such that $T(A)=T(A_1)T(A_2)$.
\begin{itemize}
\item[\rm(i)] Then the Hilbert series of $T(A_1)$, $T(A_2)$ and $T(A)$ are related by
\begin{eqnarray*}
H(F\langle X_d\rangle; \mathrm{T}_d) H(F\langle X_d\rangle\cap T(A);\mathrm{T}_d)
= H(F\langle X_d\rangle\cap T(A_1); \mathrm{T}_d) H(F\langle X_d\rangle\cap T(A_2); \mathrm{T}_d).
\end{eqnarray*}
\item[\rm(ii)] The Hilbert series of $F_d(A)$, $F_d(A_1)$, $F_d(A_2)$ are related by
\begin{eqnarray*}
H(A;\mathrm{T_d}) =  H(A_1;\mathrm{T_d})+ H(A_2;\mathrm{T_d})+ (t_1+\cdots + t_d-1) H(A_1;\mathrm{T_d})H(A_2;\mathrm{T_d}).
\end{eqnarray*}
\end{itemize}
\end{proposition}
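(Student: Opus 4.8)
The plan is to transfer everything to the free algebra in $d$ variables and then exploit that this algebra is a free ideal ring. Write $R=F\langle X_d\rangle$ and set $I=R\cap T(A)$, $I_1=R\cap T(A_1)$, $I_2=R\cap T(A_2)$; these are ideals of $R$ and, by definition, $F_d(A)=R/I$ and $F_d(A_j)=R/I_j$. First I would show that the hypothesis $T(A)=T(A_1)T(A_2)$ descends to $d$ variables, namely that $I=I_1I_2$. The inclusion $I_1I_2\subseteq I$ is immediate. For the reverse inclusion, take $f\in I$ and write $f=\sum_i a_ib_i$ with $a_i\in T(A_1)$ and $b_i\in T(A_2)$; applying the endomorphism $\varphi$ of $F\langle X\rangle$ that fixes $x_1,\ldots,x_d$ and kills all other variables, and using that $T(A_1),T(A_2)$ are $T$-ideals (hence $\varphi$-invariant) while $\varphi(f)=f$, one gets $f=\sum_i\varphi(a_i)\varphi(b_i)\in I_1I_2$.

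Next, additivity of Hilbert series on the short exact sequences $0\to I\to R\to R/I\to 0$ (and likewise for $I_1,I_2$) gives $H(F\langle X_d\rangle;\mathrm{T}_d)=H(I;\mathrm{T}_d)+H(A;\mathrm{T}_d)$ and the analogous relations for $A_1,A_2$. Consequently both (i) and (ii) reduce to the single identity
\[
H(F\langle X_d\rangle;\mathrm{T}_d)\,H(I_1I_2;\mathrm{T}_d)=H(I_1;\mathrm{T}_d)\,H(I_2;\mathrm{T}_d),
\]
from which (ii) follows by substituting $H(I_j;\mathrm{T}_d)=H(F\langle X_d\rangle;\mathrm{T}_d)-H(A_j;\mathrm{T}_d)$ and using the elementary evaluation $H(F\langle X_d\rangle;\mathrm{T}_d)=(1-(t_1+\cdots+t_d))^{-1}$.

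The core is to compute $H(I_1I_2;\mathrm{T}_d)$. By Cohn's theorem the free associative algebra $R$ is a free ideal ring, so the two-sided ideal $I_1$, regarded as a right $R$-module, is free on a homogeneous basis $\{e_\alpha\}$. Writing $B_1(\mathrm{T}_d)=\sum_\alpha\mathrm{T}_d^{\deg e_\alpha}$ for the Hilbert series of these generators, freeness gives $H(I_1;\mathrm{T}_d)=B_1(\mathrm{T}_d)\,H(F\langle X_d\rangle;\mathrm{T}_d)$, so that $B_1=H(I_1)/H(R)$. On the other hand, right-exactness of $I_1\otimes_R(-)$ applied to $0\to I_2\to R\to R/I_2\to 0$ yields the graded isomorphism $I_1\otimes_R(R/I_2)\cong I_1/I_1I_2$, since the canonical identification $I_1\otimes_R R\cong I_1$ carries the image of $I_1\otimes_R I_2$ exactly onto $I_1I_2$. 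Because $I_1$ is free as a right module, $H(I_1/I_1I_2;\mathrm{T}_d)=B_1(\mathrm{T}_d)\,H(R/I_2;\mathrm{T}_d)=B_1(\mathrm{T}_d)\,H(A_2;\mathrm{T}_d)$.

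Putting the pieces together, $H(I_1;\mathrm{T}_d)-H(I_1I_2;\mathrm{T}_d)=H(I_1/I_1I_2;\mathrm{T}_d)=B_1(\mathrm{T}_d)\,H(A_2;\mathrm{T}_d)$; substituting $B_1=H(I_1)/H(R)$ and $H(A_2)=H(R)-H(I_2)$ rearranges at once to the displayed identity, hence to (i), and then (ii) follows as explained above. I expect the main obstacle to be precisely this module-theoretic step: one must know that a two-sided ideal of the free algebra is free as a one-sided module and that the freeness is compatible with the multigrading, so that the tensor-product Hilbert series factors cleanly as $B_1\,H(A_2)$. Everything else is bookkeeping with the additivity of Hilbert series on graded short exact sequences.
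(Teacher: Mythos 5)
Your argument is correct. Note that the paper itself gives no proof of this proposition: it is quoted as a known result of Formanek and Halpin, so there is nothing internal to compare against; your write-up is essentially the classical argument from those sources. The reduction $F\langle X_d\rangle\cap T(A)=I_1I_2$ via the retraction endomorphism, the additivity of Hilbert series on graded exact sequences, and the derivation of (ii) from (i) using $H(F\langle X_d\rangle;\mathrm{T}_d)=(1-(t_1+\cdots+t_d))^{-1}$ are all fine, and the key identity $H(F\langle X_d\rangle)H(I_1I_2)=H(I_1)H(I_2)$ is obtained exactly as in Formanek's proof, from the freeness of the one-sided ideal $I_1$ together with $I_1\otimes_R(R/I_2)\cong I_1/I_1I_2$. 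The only point deserving explicit care is the one you already flag: since the series here are multigraded, one needs the basis of $I_1$ as a free right module to be multihomogeneous, which follows from the graded version of Cohn's theorem (equivalently, graded projectives over a connected graded algebra bounded below are graded free).
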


\begin{corollary}\label{hilcoro}
Let $A$ and $A_1$ be PI-algebras such that $T(A)=T(A)^n$. Then
\[
H(A;\mathrm{T_d})=\sum\limits_{j=1}^n  \binom{n}{j} (t_1+\cdots + t_d-1)^{j-1}H(A_1;\mathrm{T_d})^j.
\]
\end{corollary}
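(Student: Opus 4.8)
The plan is to argue by induction on $n$, the engine being Proposition~\ref{Formanek}(ii). The base case $n=1$ is immediate: here $T(A)=T(A_1)$, hence $H(A;\mathrm{T}_d)=H(A_1;\mathrm{T}_d)$, which is precisely the right-hand side of the claimed formula when only the $j=1$ summand survives (since $\binom{1}{1}=1$ and the factor $(t_1+\cdots+t_d-1)^{0}=1$).

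For the inductive step I would factor $T(A_1)^n=T(A_1)\cdot T(A_1)^{n-1}$. Because $T(A_1)^{n-1}$ is again a $T$-ideal, the relatively free algebra $B:=F\langle X\rangle/T(A_1)^{n-1}$ satisfies $T(B)=T(A_1)^{n-1}$, so that $T(A)=T(A_1)T(B)$. Applying Proposition~\ref{Formanek}(ii) to this factorization gives
\[
H(A;\mathrm{T}_d)=H(A_1;\mathrm{T}_d)+H(B;\mathrm{T}_d)+(t_1+\cdots+t_d-1)H(A_1;\mathrm{T}_d)H(B;\mathrm{T}_d),
\]
into which I would substitute the inductive hypothesis for $H(B;\mathrm{T}_d)$, namely the asserted formula with $n-1$ in place of $n$.

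Abbreviating $s:=t_1+\cdots+t_d-1$ and $h:=H(A_1;\mathrm{T}_d)$ to keep the bookkeeping light, the third term becomes, after reindexing,
\[
sh\sum_{j=1}^{n-1}\binom{n-1}{j}s^{j-1}h^{j}=\sum_{j=2}^{n}\binom{n-1}{j-1}s^{j-1}h^{j}.
\]
Now I would collect the coefficient of $s^{j-1}h^{j}$ across the three contributions (the lone $h$, the second sum $H(B;\mathrm{T}_d)$, and the reindexed third sum). At $j=1$ one gets $1+\binom{n-1}{1}=n=\binom{n}{1}$; at $j=n$ one gets $\binom{n-1}{n-1}=\binom{n}{n}$; and for $2\le j\le n-1$ the Pascal relation $\binom{n-1}{j}+\binom{n-1}{j-1}=\binom{n}{j}$ delivers exactly $\binom{n}{j}$. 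This produces $H(A;\mathrm{T}_d)=\sum_{j=1}^{n}\binom{n}{j}s^{j-1}h^{j}$ and closes the induction.

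The argument is entirely elementary; the only genuine ingredient is Pascal's rule, so I anticipate no real obstacle beyond keeping the two shifted summations aligned during the reindexing. One small sanity check I would flag is that the hypothesis is meant to read $T(A)=T(A_1)^{n}$ (the displayed ``$T(A)=T(A)^{n}$'' being a typo), which is the case relevant to $A_1=E$ and $A=UT_n(E)$, where $T(UT_n(E))=T(E)^{n}$.
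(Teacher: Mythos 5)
Your proof is correct and is exactly the argument the paper intends (the corollary is stated without proof): induction on $n$ via the factorization $T(A_1)^n=T(A_1)\cdot T(A_1)^{n-1}$, Proposition~\ref{Formanek}(ii), and Pascal's rule, with the passage to the relatively free algebra $B=F\langle X\rangle/T(A_1)^{n-1}$ justified by the paper's remark that every $T$-ideal is the ideal of identities of its own relatively free algebra. You are also right that the hypothesis should read $T(A)=T(A_1)^n$; the displayed $T(A)=T(A)^n$ is a typo.
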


For a PI-algebra $A$ we define \textit{the multiplicity series of $A$} in $d$ variables by
\[
M(A;\mathrm{T}_d)=\sum\limits_{\lambda} m_{\lambda}(A)\mathrm{T}_d^{\lambda}=\sum\limits_{\lambda}m_{\lambda}(A)t_1^{\lambda_1}\cdots t_d^{\lambda_d}.
\]
Notice that if we know the multiplicity series of $A$ it is possible to find the multiplicities $m_{\lambda}(A)$, where $\lambda$ is a partition in no more than $d$ parts.\\

\subsection{Infinite dimensional Grassmann algebra}

\begin{definition}
Let $W$ be an infinite dimensional vector space  with basis $\{e_1,e_2,\ldots\}$ over a field $F$ of characteristic $0$.
The \textit{Grassmann} (or \textit{exterior}) \textit{algebra} $E=E(W)$ is the associative algebra generated by $\{e_1,e_2,\ldots\}$ and with defining relations
$e_ie_j+e_je_i=0$
for all $i,j\in\mathbb{N}$.
\end{definition}
Observe that $E=E^{(0)}\oplus E^{(1)}$ where
\[
E^{(0)}:= \spn_F \{1,e_{i_1}\cdots e_{i_{2k}}\mid 1\leq i_1< i_2<\cdots< i_{2k}\ k>0\},
\]
\[
E^{(1)}:= \spn_F \{e_{i_1}\cdots e_{i_{2k+1}}\mid 1\leq i_1<\cdots< i_{2k+1}\ k\geq 0\}.
\]
It is easily checked that $E^{(0)}E^{(0)}+ E^{(1)}E^{(1)}\subseteq E^{(0)} $ and $E^{(0)}E^{(1)}+ E^{(1)}E^{(0)}\subseteq E^{(1)}$.
Hence the decomposition $E=E^{(0)}\oplus E^{(1)}$ is a $\mathbb{Z}_2$-grading of $E$. Notice that $E^{(0)}$ coincides with the center of $E$.\\
\indent
The next fact is well known.

\begin{proposition}
The Grassmann algebra $E$ satisfies the polynomial identity
\[
[[x_1, x_2],x_3],
\]
where $[\cdot, \cdot]$ is the Lie commutator, i.e. $[w,y]:=wy-yw$ for any $w,y\in F\langle X\rangle$.
\end{proposition}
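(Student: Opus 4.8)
The plan is to exploit the $\mathbb{Z}_2$-grading $E=E^{(0)}\oplus E^{(1)}$ recorded just above the statement, together with the fact—also noted there—that $E^{(0)}$ is exactly the center of $E$. The entire identity reduces to a single structural observation: the commutator of any two elements of $E$ always lands in the central even part $E^{(0)}$. Once this is established, $[x_1,x_2]$ evaluates to a central element under every substitution, so its commutator with any third element $x_3$ vanishes identically, which is precisely the assertion that $[[x_1,x_2],x_3]$ is a polynomial identity of $E$.

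First I would reduce to homogeneous elements. Since the polynomial $[[x_1,x_2],x_3]$ is multilinear, it suffices to verify that it vanishes when the variables are specialized to homogeneous elements spanning $E$. So I take $a,b\in E$ and decompose them as $a=a_0+a_1$ and $b=b_0+b_1$ with $a_0,b_0\in E^{(0)}$ and $a_1,b_1\in E^{(1)}$. Because $E^{(0)}$ is central, every commutator involving at least one even component vanishes, and therefore $[a,b]=[a_1,b_1]$.

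Next I would compute $[a_1,b_1]$. The key step—and the only place where the defining relations $e_ie_j+e_je_i=0$ genuinely enter—is that two odd elements anticommute: for homogeneous monomials $u,v$ of odd length, sliding one past the other produces the sign $(-1)^{(\deg u)(\deg v)}=-1$ (and if $u,v$ share a generator then both $uv$ and $vu$ vanish, so the relation holds trivially). Hence $uv=-vu$, and by linearity $a_1b_1=-b_1a_1$. Consequently $[a_1,b_1]=a_1b_1-b_1a_1=2a_1b_1$, which is a product of two odd elements and therefore lies in $E^{(0)}$. This confirms $[a,b]\in E^{(0)}$ for all $a,b\in E$.

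Finally, since $E^{(0)}$ is the center of $E$, we get $[[a,b],c]=[2a_1b_1,c]=0$ for every $c\in E$. As this holds for all homogeneous $a,b,c$, and the polynomial is multilinear, $[[x_1,x_2],x_3]$ is a polynomial identity of $E$. I do not anticipate any real obstacle here: the argument is essentially a bookkeeping of signs in the exterior algebra, with the grading facts stated immediately before the proposition carrying all the structural weight. The only point deserving genuine care is the sign computation establishing that odd elements anticommute, as this is the single spot where the generating relation $e_ie_j+e_je_i=0$ is invoked.
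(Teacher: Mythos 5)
Your argument is correct: the paper itself gives no proof, simply recording this as a well-known fact, and your reasoning is the standard one, resting exactly on the two grading facts the paper states just beforehand ($E^{(0)}$ is the center and $E^{(1)}E^{(1)}\subseteq E^{(0)}$), together with the sign computation showing odd monomials anticommute. Everything checks out, including the reduction $[a,b]=[a_1,b_1]=2a_1b_1\in E^{(0)}$, which immediately kills the outer commutator.
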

The triple commutator is the only generator of $T(E)$ when the field is infinite of characteristic different from $2$.
This fact in characteristic zero was proved by Krakowski and Regev in \cite{KR}. It follows also from the results of Latyshev in \cite{Ly3} and \cite{Ly4}
(but not stated explicitly there). In positive characteristic, the reader can find the proof in \cite{GK}.

\begin{theorem}
The $T$-ideal of $E$ is generated by the polynomial
\[
[x_1, x_2, x_3].
\]
\end{theorem}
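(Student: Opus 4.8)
\section*{Proof proposal}

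The plan is to prove the two inclusions $I\subseteq T(E)$ and $T(E)\subseteq I$ separately, where $I$ denotes the $T$-ideal of $\FF$ generated by $[x_1,x_2,x_3]$, and to secure the hard inclusion by a dimension count carried out through Hilbert series. The inclusion $I\subseteq T(E)$ is immediate from the preceding proposition: since $E$ satisfies $[x_1,x_2,x_3]$ we have $[x_1,x_2,x_3]\in T(E)$, and $T(E)$ being a $T$-ideal forces $I\subseteq T(E)$. Consequently, for every $d$ there is a surjection of relatively free algebras $R_d:=F\langle X_d\rangle/(F\langle X_d\rangle\cap I)\twoheadrightarrow F_d(E)$, whence $H(R_d;\mathrm{T}_d)\geq H(E;\mathrm{T}_d)$ coefficientwise. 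The goal is to show this surjection is an isomorphism for all $d$; since a multilinear identity in $n$ variables already lives in $F\langle X_n\rangle$, this yields $I=T(E)$.

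Working modulo $I$, the crucial structural fact is that every commutator $[x_i,x_j]$ is central: indeed $[[x_i,x_j],x_k]=[x_i,x_j,x_k]\in I$, so $[x_i,x_j]$ commutes with each generator and hence with all of $R_d$. From centrality I would extract the two relations that govern products of commutators. Expanding the instance $[[x_ix_k,x_j],x_l]\in I$ by the Leibniz rule $[ab,c]=a[b,c]+[a,c]b$ and using that the inner commutators are central gives
\[
[x_i,x_j][x_k,x_l]+[x_k,x_j][x_i,x_l]\equiv 0\pmod I,
\]
and the specialisation $k=i$ yields $2[x_i,x_j][x_i,x_l]\equiv 0$, i.e.\ $[x_i,x_j][x_i,x_l]\equiv 0$. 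Together with $[x_i,x_j]=-[x_j,x_i]$, these relations say that a product of commutators vanishes as soon as two of its indices coincide and is totally antisymmetric in its (distinct) indices; thus any such product reduces to $\pm$ a standard one $[x_{i_1},x_{i_2}]\cdots[x_{i_{2k-1}},x_{i_{2k}}]$ with $i_1<\cdots<i_{2k}$. Sorting the ordinary variables by means of $x_bx_a=x_ax_b-[x_a,x_b]$ (each transposition spawning a central commutator) then shows that $R_d$ is spanned by the monomials
\[
x_1^{a_1}\cdots x_d^{a_d}\,[x_{i_1},x_{i_2}]\cdots[x_{i_{2k-1}},x_{i_{2k}}],\qquad a_l\geq 0,\ i_1<\cdots<i_{2k}.
\]

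The generating function of this spanning set factors as the polynomial part times the commutator part,
\[
G(\mathrm{T}_d):=\prod_{i=1}^{d}\frac{1}{1-t_i}\sum_{\substack{S\subseteq\{1,\dots,d\}\\ |S|\ \text{even}}}\prod_{i\in S}t_i
=\frac{1}{2}\left(\prod_{i=1}^{d}\frac{1+t_i}{1-t_i}+1\right),
\]
so that $H(R_d;\mathrm{T}_d)\leq G(\mathrm{T}_d)$ coefficientwise. To close the argument I would show the reverse bound $H(E;\mathrm{T}_d)\geq G(\mathrm{T}_d)$ by evaluating the standard monomials in $E$ and proving their images are linearly independent: substituting $x_i\mapsto y_i+e_i$, where the $e_i$ are the Grassmann generators and the $y_i$ are central even elements chosen algebraically independent, one gets $[x_i,x_j]\mapsto 2e_ie_j$ while $x_i^{a}\mapsto y_i^{a}+a\,y_i^{a-1}e_i$, so that working in a fixed $x$-multidegree the even part records the exponents $(a_1,\dots,a_d)$ and the top odd part records the set $\{i_1,\dots,i_{2k}\}$. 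These data being pairwise distinct, the images are independent. Chaining the inequalities gives $H(E;\mathrm{T}_d)\geq G(\mathrm{T}_d)\geq H(R_d;\mathrm{T}_d)\geq H(E;\mathrm{T}_d)$, so all are equal.

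Equality $H(R_d;\mathrm{T}_d)=H(E;\mathrm{T}_d)$ means the surjection $R_d\twoheadrightarrow F_d(E)$ is an isomorphism in every multidegree, hence $F\langle X_d\rangle\cap I=F\langle X_d\rangle\cap T(E)$ for all $d$; letting $d$ grow gives $I=T(E)$, as claimed. I expect the main obstacle to be precisely the realisation step: while the reduction to the standard monomials modulo $I$ is a fairly mechanical consequence of the two displayed identities, the bookkeeping needed to keep the images linearly independent in $E$ (equivalently, computing the Hilbert series of $E$ exactly rather than merely bounding it) is where the genuine content lies, since the single ``leaked'' factor $a\,y_i^{a-1}e_i$ in $x_i^a$ must be separated cleanly from the commutator generators. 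An alternative, more representation-theoretic route would match the standard monomials with hook diagrams and recover the Olsson--Regev cocharacter, equivalently the codimensions $c_n(E)=2^{n-1}$, but the substitution above is the most self-contained.
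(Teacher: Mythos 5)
The paper does not actually prove this theorem: it is quoted as a known result, attributed to Krakowski and Regev \cite{KR} (with alternative sources in Latyshev's work and, for positive characteristic, \cite{GK}). Your argument is therefore necessarily a different route from the paper's, but it is essentially the classical proof of Krakowski--Regev as streamlined in \cite{D2}, and it is correct in outline: the reduction modulo $I$ to the spanning set $x_1^{a_1}\cdots x_d^{a_d}[x_{i_1},x_{i_2}]\cdots[x_{i_{2k-1}},x_{i_{2k}}]$, $i_1<\cdots<i_{2k}$, via centrality of commutators and the two relations you derive is sound, and the generating function of that set is exactly the series $\frac{1}{2}+\frac{1}{2}\prod_{i=1}^d\frac{1+t_i}{1-t_i}$ of Proposition \ref{seriedeE}, so the squeeze $H(E;\mathrm{T}_d)\geq G(\mathrm{T}_d)\geq H(R_d;\mathrm{T}_d)\geq H(E;\mathrm{T}_d)$ closes the argument. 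Two points deserve more care than you give them. First, the substitution $x_i\mapsto y_i+e_i$ with the $y_i$ central and algebraically independent cannot be performed inside $E$ itself (no such elements exist there); you should work in $F[y_1,\ldots,y_d]\otimes E$ and note that this algebra has the same $T$-ideal as $E$ because tensoring with a unital commutative algebra preserves multilinear identities, so independence of the images there still bounds $H(E;\mathrm{T}_d)$ from below. Second, ``these data being pairwise distinct, the images are independent'' is too quick: within a fixed multidegree every standard monomial indexed by an even subset $S$ contributes, after expanding the factors $(y_l+e_l)^{a_l}$, to the components supported on \emph{every} even superset of $S$ as well, so distinctness of leading terms must be upgraded to a triangularity argument (take a subset $T$ minimal under inclusion in the support of a hypothetical dependence relation; only $S=T$ contributes to the component $y^{a-\mathbb{1}_T}e_{T}$, forcing its coefficient to vanish). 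With these repairs the proof is complete and self-contained, which is more than the paper offers for this statement.
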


The following theorem of Olsson and Regev gives the cocharacter sequence of $E$.

\begin{theorem}[Olsson and Regev \cite{OR}]\label{teo KRG}
Let $E$ be the infinite dimensional Grassmann algebra over a field of characteristic zero. Then the cocharacter sequence of $E$ for any $n\geq 1$ is given by
\[
\chi_n(E)=\sum_{p=1}^n\chi_{(p,1^{n-p})}.
\]
\end{theorem}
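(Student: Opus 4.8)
The plan is to pass to the relatively free algebra $F_d(E)$, compute its Hilbert series $H(E;\mathrm{T}_d)$ in closed form using the preceding description $T(E)=\langle[x_1,x_2,x_3]\rangle$, and then extract the multiplicities by a Schur function expansion, invoking the Berele--Drensky identification $m'_\lambda(E)=m_\lambda(E)$. First I would exploit that $T(E)$ is generated by $[x_1,x_2,x_3]$, so in $F_d(E)$ every commutator $[y_i,y_j]$ (where $y_i$ is the image of $x_i$) is central. From this single identity I would derive the auxiliary relations $[y_i,y_j]^2=0$, the vanishing $[y_i,y_j][y_i,y_k]=0$ of any product of commutators sharing a variable, the commutativity of commutators among themselves, and the key antisymmetry $[y_i,y_j][y_k,y_l]=-[y_i,y_k][y_j,y_l]$. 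Iterating the last relation shows that for every subset $S\subseteq\{1,\dots,d\}$ of even cardinality, all products of $|S|/2$ commutators on the variables indexed by $S$ coincide up to sign, so they span a one--dimensional space with a canonical generator $c_S$.

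Next I would prove that
\[
\{\,y_1^{a_1}\cdots y_d^{a_d}\,c_S \mid a_i\ge 0,\ S\subseteq\{1,\dots,d\},\ |S|\text{ even}\,\}
\]
is a basis of $F_d(E)$: spanning follows by reordering an arbitrary monomial into this normal form modulo the relations above (each transposition introduces a central commutator), while linear independence is verified by evaluating the $y_i$ on generic elements of $E$. Reading off the multidegree of each basis vector, where the factor $y_1^{a_1}\cdots y_d^{a_d}$ contributes $\prod_i(1-t_i)^{-1}$ and $c_S$ contributes $\prod_{i\in S}t_i$, and summing over even $S$ via $\sum_{|S|\text{ even}}\prod_{i\in S}t_i=\tfrac12\big(\prod_i(1+t_i)+\prod_i(1-t_i)\big)$, gives the closed form
\[
H(E;\mathrm{T}_d)=\frac12\left(1+\prod_{i=1}^d\frac{1+t_i}{1-t_i}\right).
\]

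Finally I would expand this symmetric function into Schur functions. Using the determinantal identity $S_{(p,1^q)}=\sum_{i=0}^{q}(-1)^i h_{p+i}e_{q-i}$ for hooks, together with $\sum_{m\ge0}h_m=\prod_i(1-t_i)^{-1}$ and $\sum_{m\ge0}e_m=\prod_i(1+t_i)$, a short computation collapses the alternating sum to the odd complete symmetric functions and yields
\[
\frac12\left(1+\prod_{i=1}^d\frac{1+t_i}{1-t_i}\right)=1+\sum_{p\ge1}\ \sum_{0\le q\le d-1}S_{(p,1^q)}(\mathrm{T}_d).
\]
Thus every hook occurs with coefficient exactly $1$ and every non--hook with coefficient $0$. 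Since the Schur coefficients equal the cocharacter multiplicities, $m_\lambda(E)=1$ precisely for $\lambda=(p,1^{n-p})$, and as $d$ is arbitrary all column lengths are captured, giving $\chi_n(E)=\sum_{p=1}^{n}\chi_{(p,1^{n-p})}$.

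The main obstacle is twofold and sits in the two middle steps. The first is proving linear independence of the proposed basis: the normal form is routine, but independence requires an honest generic--element argument in $E$ (or, equivalently, matching against an independently established codimension $c_n(E)=2^{n-1}=\sum_{p=1}^{n}\binom{n-1}{p-1}$). The second is the Schur expansion itself, that is, recognizing the rational function $\tfrac12\big(1+\prod_i(1+t_i)/(1-t_i)\big)$ as \emph{exactly} the sum of hook Schur functions; here the decisive technical point is the hook Giambelli formula and the cancellation that leaves only the odd $h_m$.
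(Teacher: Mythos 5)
The paper does not prove this statement at all: Theorem \ref{teo KRG} is quoted from Olsson and Regev \cite{OR} and used as a black box (indeed, later in the paper the logical flow runs in the opposite direction, from the known cocharacter of $E$ to its Hilbert and double Hilbert series). So your proposal cannot be compared to an internal proof; judged on its own, it is correct and follows the standard ``relatively free algebra'' route, essentially the one in Drensky's book \cite{D2}, rather than the original colength/combinatorial argument of \cite{OR}. All the key steps check out: the consequence $[x_1,x_2][x_3,x_4]=-[x_1,x_3][x_2,x_4]$ of the centrality of commutators (obtained from $[[x_2 x_3,x_1],x_4]=0$) does give the normal form $y_1^{a_1}\cdots y_d^{a_d}[y_{j_1},y_{j_2}]\cdots[y_{j_{2k-1}},y_{j_{2k}}]$ with $j_1<\cdots<j_{2k}$, the even-subset generating function identity yields exactly the closed form of Proposition \ref{seriedeE}, and your Giambelli computation $\sum_{p,q}S_{(p,1^q)}=\bigl(\sum_j e_j\bigr)\sum_{m\ \mathrm{odd}}h_m=\tfrac12\bigl(\prod_i\tfrac{1+t_i}{1-t_i}-1\bigr)$ is verified by the telescoping $\sum_{i=0}^{m-1}(-1)^i$. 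Two points deserve care if this were written out in full: linear independence of the normal form really does require the generic substitution argument (do not fall back on $c_n(E)=2^{n-1}$, which is usually derived from the very cocharacter you are proving, hence circular); and one should say explicitly that the Berele--Drensky identification $m'_\lambda=m_\lambda$ only covers $\lambda$ with at most $d$ parts, so the conclusion for all of $\chi_n(E)$ needs $d\geq n$ with $d$ arbitrary, as you correctly note. Your approach buys a self-contained computational proof; the citation route the paper takes buys brevity and conformity with the literature.
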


The following result gives us an expression for the Hilbert series of $F_d(E)$. The proof of this can be found in \cite{D2}.

\begin{proposition}\label{seriedeE}
Let $E$ the infinite dimensional Grassmann algebra over a field of characteristic zero. The Hilbert series of $F_d(E)$ in $d$ variables is given by
\[
H(E;\mathrm{T}_d)=  \dfrac{1}{2}+\dfrac{1}{2}\displaystyle\prod\limits_{i=1}^d\dfrac{1+t_i}{1-t_i}.
\]
\end{proposition}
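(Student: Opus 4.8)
The plan is to read off the Hilbert series directly from the cocharacter of $E$ and then match it to the product on the right by a purely symmetric-function computation. By the Olsson--Regev formula (Theorem~\ref{teo KRG}) every hook partition occurs in $\chi_n(E)$ with multiplicity $1$ and every non-hook with multiplicity $0$, so the Berele--Drensky correspondence $m_\lambda(E)=m'_\lambda(E)$ between cocharacter multiplicities and the coefficients in the Schur decomposition of the Hilbert series gives, after adding the degree-zero contribution of the unit,
\[
H(E;\mathrm{T}_d)=1+\sum_{p\ge 1,\ q\ge 0}S_{(p,1^{q})}(\mathrm{T}_d),
\]
where the sum ranges effectively over hooks with at most $d$ parts, since $S_\lambda(\mathrm{T}_d)=0$ as soon as $\lambda$ has more than $d$ rows. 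Hence it suffices to prove the identity
\[
\prod_{i=1}^{d}\frac{1+t_i}{1-t_i}=1+2\sum_{p\ge 1,\ q\ge 0}S_{(p,1^{q})}(\mathrm{T}_d),
\]
which is equivalent to $2H(E;\mathrm{T}_d)-1=\prod_{i}\frac{1+t_i}{1-t_i}$, i.e. exactly the claimed closed form.

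To establish this identity I would first expand the left-hand side through the standard generating functions $\prod_i(1+t_i)=\sum_{k\ge 0}e_k(\mathrm{T}_d)=\sum_{k\ge 0}S_{(1^{k})}(\mathrm{T}_d)$ (using $e_k=S_{(1^k)}$, already recorded above) and $\prod_i(1-t_i)^{-1}=\sum_{m\ge 0}S_{(m)}(\mathrm{T}_d)$, obtaining
\[
\prod_{i=1}^{d}\frac{1+t_i}{1-t_i}=\sum_{k,m\ge 0}S_{(1^{k})}(\mathrm{T}_d)\,S_{(m)}(\mathrm{T}_d).
\]
Next I would evaluate each factor product by the second case of the Young rule (Case~2): for $k,m\ge 1$, adjoining a column of $k$ boxes to the single row $(m)$ with at most one box per row yields precisely the two hooks $(m+1,1^{k-1})$ and $(m,1^{k})$, so that $S_{(1^k)}S_{(m)}=S_{(m+1,1^{k-1})}+S_{(m,1^{k})}$, while the boundary ranges $k=0$ and $m=0$ contribute the pure rows $S_{(m)}$ and the pure columns $S_{(1^{k})}$ respectively, with the term $k=m=0$ giving the constant $1$.

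The combinatorial core is then the bookkeeping: one collects these four families of hooks and checks that every hook $(p,1^{q})$ with $p\ge 1$, $q\ge 0$ is produced with total coefficient exactly $2$. This reduces to a short case analysis according to whether $p=1$ or $p\ge 2$ and whether $q=0$ or $q\ge 1$, in each of which precisely two of the families contribute. The hard part — and the step I would treat most carefully — is the boundary, namely the pure row $(p)$, the pure column $(1^{q})$, and the single box $(1)$, since these lie exactly where the summation ranges overlap and are most prone to being over- or under-counted. Once the uniform coefficient $2$ is confirmed, the displayed identity follows and solving for $H(E;\mathrm{T}_d)$ gives $H(E;\mathrm{T}_d)=\tfrac12+\tfrac12\prod_{i=1}^{d}\frac{1+t_i}{1-t_i}$, as required.
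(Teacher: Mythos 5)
Your argument is correct: the coefficient count at the end does work out, since each hook $(p,1^q)$ with $p\ge 1$, $q\ge 0$ is hit exactly twice among the four families (constant term, pure rows $S_{(m)}$ with $m\ge 1$, pure columns $S_{(1^k)}$ with $k\ge 1$, and the Pieri products $S_{(m+1,1^{k-1})}+S_{(m,1^k)}$ for $k,m\ge 1$), including at the boundaries $p=1$, $q=0$ that you rightly single out. Note, however, that the paper does not prove this proposition at all; it simply cites Drensky's book \cite{D2}, where the standard derivation is quite different: one exhibits an explicit linear basis of $F_d(E)$ consisting of monomials $x_1^{a_1}\cdots x_d^{a_d}[x_{i_1},x_{i_2}]\cdots[x_{i_{2k-1}},x_{i_{2k}}]$ with $i_1<\cdots<i_{2k}$, whence $H(E;\mathrm{T}_d)=\prod_i(1-t_i)^{-1}\sum_{k\ge 0}e_{2k}(\mathrm{T}_d)=\frac12+\frac12\prod_i\frac{1+t_i}{1-t_i}$. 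Your route instead takes the Olsson--Regev cocharacter formula as input, converts it to the Schur decomposition of $H(E;\mathrm{T}_d)$ via the Berele--Drensky equality $m'_\lambda=m_\lambda$, and then verifies the closed form by a Pieri-rule identity; this is logically sound (Olsson--Regev is proved independently of the Hilbert series), and it has the advantage of requiring no structure theory of $F_d(E)$, at the cost of assuming the full cocharacter of $E$ rather than deriving the series from first principles.
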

The next theorem talks about the polynomial identities of
the $F$-algebra $UT_n (E)$ of $n\times n$ upper triangular matrices with entries in the Grassmann algebra $E$. See also \cite{cenviv1} for the case of $UT_2(E)$ in positive characteristic.

\begin{theorem}[Abakarov \cite{Ab}]\label{tidealutn} The $T$-ideal of $UT_n(E)$ is generated by the polynomial
\[
[x_1,x_ 2,x_3]\cdots [x_{3n-2},x_{3n-1},x_{3n}].
\]
\end{theorem}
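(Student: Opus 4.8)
The plan is to deduce the statement from the two equalities $\langle c_n\rangle_T=T(E)^n$ and $T(UT_n(E))=T(E)^n$, where $c_n=[x_1,x_2,x_3]\cdots[x_{3n-2},x_{3n-1},x_{3n}]$ and $T(E)^n$ is the $n$-th power of the $T$-ideal $T(E)$. The first equality is formal: since the characteristic is $0$ and $T(E)$ is generated as a $T$-ideal by the single multilinear polynomial $[x_1,x_2,x_3]$ \cite{KR}, I would use the standard fact that the product of finitely many $T$-ideals, each generated by a multilinear polynomial on its own (pairwise disjoint) set of variables, is generated as a $T$-ideal by the product of those polynomials. Applying this on the successive blocks of three variables gives $\langle c_n\rangle_T=\langle[x_1,x_2,x_3]\rangle_T^{\,n}=T(E)^n$.

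For the inclusion $T(E)^n\subseteq T(UT_n(E))$ I would argue through the radical. Sending a matrix of $UT_n(E)$ to its diagonal is a homomorphism onto $E\oplus\cdots\oplus E$ ($n$ copies), with kernel the ideal $J$ of strictly upper triangular matrices, and $J^n=0$. If $g_1,\dots,g_n\in T(E)$ and we substitute arbitrary elements of $UT_n(E)$, each $g_i$ takes its value in $J$, because $g_i$ is an identity of $E$ and hence of $E\oplus\cdots\oplus E$, so all diagonal entries of $g_i$ vanish. Thus $g_1\cdots g_n$ lands in $J^n=0$; as $T(E)^n$ is the span of such products, every element of $T(E)^n$ is an identity of $UT_n(E)$.

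The reverse inclusion $T(UT_n(E))\subseteq T(E)^n$ is the heart of the matter and the step I expect to be the main obstacle. The plan is to induct on $n$, the base $n=1$ being $T(UT_1(E))=T(E)$. Writing $UT_n(E)=\left(\begin{smallmatrix}E & R\\ 0 & UT_{n-1}(E)\end{smallmatrix}\right)$ with off-diagonal block $R$ a \emph{faithful} $(E,UT_{n-1}(E))$-bimodule, I would invoke the classical theorem of Lewin on $T$-ideals of triangular matrix algebras (see also \cite{GZ}) to get $T(UT_n(E))=T(E)\,T(UT_{n-1}(E))$; the induction hypothesis $T(UT_{n-1}(E))=T(E)^{n-1}$ then finishes the proof. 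The real content of this step is that every identity of $UT_n(E)$ is a consequence of $c_n$; concretely, one must show that a multilinear $f\notin T(E)^n$ admits an evaluation in $UT_n(E)$ with nonzero value, and it is exactly the faithfulness of $R$ that produces such an evaluation.

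Finally, it is useful for the sequel to record the resulting Hilbert series and to use it as a consistency check on the canonical surjection $F\langle X_d\rangle/T(E)^n\to F_d(UT_n(E))$ coming from the easy inclusion: once the $T$-ideal factorization is known, Corollary \ref{hilcoro} with $A_1=E$ gives for the relatively free algebra of $T(E)^n$ the Hilbert series $\sum_{j=1}^n\binom{n}{j}(t_1+\cdots+t_d-1)^{j-1}H(E;\mathrm{T}_d)^j$, with $H(E;\mathrm{T}_d)$ as in Proposition \ref{seriedeE}. The point to keep in mind is that $E$ is infinite dimensional, so a single (large) number of variables does not capture all multiplicities; this is why the identity of $T$-ideals, rather than a comparison in relatively free algebras of some fixed rank, is what must ultimately be established.
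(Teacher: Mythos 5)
The paper offers no proof of this theorem: it is imported verbatim from Abakarov \cite{Ab} (the introduction also points to Latyshev and Popov for the surrounding finite-basis results), so there is no internal argument to measure your proposal against. Your overall architecture --- reducing to the two equalities $\langle c_n\rangle_T=T(E)^n$ and $T(UT_n(E))=T(E)^n$ --- is the right one, and your proof of the inclusion $T(E)^n\subseteq T(UT_n(E))$ (diagonal projection onto $E\oplus\cdots\oplus E$ plus $J^n=0$ for the strictly upper triangular ideal $J$) is correct and complete.

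The two remaining steps are where the content lies, and neither is closed. For $T(E)^n\subseteq\langle c_n\rangle_T$ you appeal to a ``standard fact'' about products of $T$-ideals generated by multilinear polynomials in disjoint variables; whatever its status in general, you should not use it as a black box, since the whole issue is absorbing the monomials that sit between consecutive factors $g_i\in T(E)$. Here this is handled by the identity $[a,b,c]d=[a,b,cd]-c[a,b,d]$, which shows that $T(E)$ is spanned by left multiples $w[p,q,r]$ of triple commutators and hence that every element of $T(E)^n$ is a combination of terms $w[p_1,q_1,r_1]\cdots[p_n,q_n,r_n]z$, manifestly consequences of $c_n$. The serious gap is $T(UT_n(E))\subseteq T(E)^n$. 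Lewin's theorem \cite{L} yields $T\bigl(\begin{smallmatrix}A&M\\0&B\end{smallmatrix}\bigr)=T(A)T(B)$ when $M$ is a \emph{free} $(A,B)$-bimodule of infinite rank; for a general bimodule only the inclusion $T(A)T(B)\subseteq T\bigl(\begin{smallmatrix}A&M\\0&B\end{smallmatrix}\bigr)$ survives. The block $R=E^{1\times(n-1)}$ is nowhere near free over $(E,UT_{n-1}(E))$ (a rank-one free bimodule is already $E\otimes UT_{n-1}(E)$), and faithfulness is not a hypothesis under which any standard form of Lewin's theorem delivers the reverse inclusion. Producing, for each multilinear $f\notin T(E)\,T(UT_{n-1}(E))$, a nonvanishing substitution into $UT_n(E)$ itself rather than into Lewin's generic triangular algebra is exactly the content of Abakarov's theorem, and your proposal identifies this as the heart of the matter without supplying it.
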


\section{The operator $\widehat{Y}$}
In this section, we shall talk about the tools used  
in the development of an algorithm to calculate the multiplicities in the cocharacter sequence of the algebra $UT_n (E)$ of $n\times n$ upper triangular matrices
with entries in the Grassmann algebra $E$ over a field $F$ of characteristic zero. \\
\indent
We follow the ideas developed in \cite{BD}. We start studying the action of two basic operators ($Y$ and $\widehat{Y}$ in the text) in the language of multiplicity series.
Then, we compute the Hilbert series of $UT_n(E)$ 
and find an expression for its multiplicity series. We shall
give a description of the partitions $\lambda$ such that the multiplicities $m_{\lambda}$ are nonzero in the cocharacter sequence of $UT_n(E)$.
Finally, we compute the multiplicity $m_{\lambda}$ for $UT_n(E)$, where  $1\leq n\leq 3$ and $\lambda$ is a partition in no more than $2$ parts.

\begin{definition}\label{Youngoperator}
Let $Y$ be the linear operator in $\mathbb{C}[[\mathrm{V}_d]]$ which sends the multiplicity series of a symmetric function to  the multiplicity series of its Young-derived.
That is, if $g(T_d)$ is a symmetric function, then
\[
Y(M(g);\mathrm{T}_d)=M\left(\left(\prod\limits_{i=1}^d\dfrac{1}{(1-t_{i})}\right)g(\mathrm{T}_{d}); \mathrm{T}_d\right).
\]
The operator $Y$ is called the {\it Young-derived operator}.
\end{definition}

\begin{definition}\label{Yparnovo}
If $g(\mathrm{T}_d)\in\mathbb{C}[[\mathrm{T}_d]]^{S_d}$, then we define the linear operator $\widehat{Y}$ in $\mathbb{C}[[\mathrm{V}_d]]\subseteq\mathbb{C}[[\mathrm{T}_d]]$ as
\[
\widehat{Y}(M(g); \mathrm{T}_d):= M\left( g(\mathrm{T}_d)\left[\dfrac{1}{2}\prod\limits_{i=1}^d (1-t_i)+\dfrac{1}{2}\displaystyle\prod\limits_{i=1}^d(1+t_i)\right]; \mathrm{T}_d\right).
\]
\end{definition}

The following proposition is well known. It describes the multiple action of $Y$ on $1$. A proof can be found in \cite{BD}.

\begin{proposition}
For $d\geq k \geq 1$ the following decomposition holds
\[
\prod\limits_{i=1}^d \dfrac{1}{(1-t_i)^k}=\sum\limits_{\mu}\eta_{\mu} S_{\mu}(\mathrm{T}_d),
\]
where the summation is on all partitions $\mu=(\mu_1,\ldots,\mu_k)$ and
\[
\eta_{\mu}= S_{\mu}\underbrace{(1,\ldots,1)}_{k\ \textrm{times}}=\dim W_k(\mu).
\]
Equivalently, for $k\geq 1$
\[
\]
\end{proposition}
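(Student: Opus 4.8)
The plan is to argue by induction on $k$, using the Young rule (Case 1) as the engine and the classical branching of Schur functions under restriction of the number of variables to identify the coefficients. For the base case $k=1$ I would use that
\[
\prod_{i=1}^d\frac{1}{1-t_i}=\sum_{m\ge 0}h_m(\mathrm{T}_d)=\sum_{m\ge 0}S_{(m)}(\mathrm{T}_d),
\]
since the complete homogeneous symmetric function $h_m$ coincides with the Schur function $S_{(m)}$. Here the only partitions that occur are the one-row partitions $(m)$, each with coefficient $1$, which agrees with $\eta_{(m)}=S_{(m)}(1)=\dim W_1((m))=1$.

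For the inductive step I would write
\[
\prod_{i=1}^d\frac{1}{(1-t_i)^{k}}=\left(\prod_{i=1}^d\frac{1}{1-t_i}\right)\prod_{i=1}^d\frac{1}{(1-t_i)^{k-1}}=\left(\sum_{m\ge 0}S_{(m)}(\mathrm{T}_d)\right)\sum_{\nu}\eta_{\nu}S_{\nu}(\mathrm{T}_d),
\]
where by the induction hypothesis $\nu$ runs over partitions in at most $k-1$ parts and $\eta_\nu=\dim W_{k-1}(\nu)$. Expanding each product $S_{(m)}S_{\nu}$ by the Young rule (Case 1) replaces it by the sum of all $S_{\lambda}$ for which $\lambda$ is obtained from $\nu$ by adding a horizontal strip of size $m$, i.e. $\lambda_1\ge\nu_1\ge\lambda_2\ge\nu_2\ge\cdots$. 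Since attaching a horizontal strip to a diagram with at most $k-1$ rows yields a diagram with at most $k$ rows, only partitions $\lambda$ in at most $k$ parts survive, and after collecting terms the coefficient of a fixed $S_{\lambda}$ becomes $\sum_{\nu}\eta_{\nu}$, the sum running over all $\nu$ that interlace $\lambda$ in this way.

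It then remains to identify this coefficient with $\eta_\lambda=S_\lambda(\underbrace{1,\ldots,1}_{k})=\dim W_k(\lambda)$. For this I would invoke the branching rule for Schur polynomials under the passage from $k$ to $k-1$ variables,
\[
S_{\lambda}(t_1,\dots,t_{k-1},t_k)=\sum_{\nu}S_{\nu}(t_1,\dots,t_{k-1})\,t_k^{\,|\lambda|-|\nu|},
\]
the sum being over $\nu$ interlacing $\lambda$; specializing all variables to $1$ gives $S_{\lambda}(1^{k})=\sum_{\nu}S_{\nu}(1^{k-1})$, that is $\dim W_k(\lambda)=\sum_{\nu}\dim W_{k-1}(\nu)$ over the very same interlacing $\nu$. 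This exactly matches the coefficient computed from the Young rule, closing the induction. The one genuinely substantive point — and the step I expect to demand the most care — is this branching identity (equivalently, the $GL_k\downarrow GL_{k-1}$ restriction); everything else is bookkeeping with Case 1 of the Young rule.

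Alternatively, the statement follows in a single line from the Cauchy identity $\prod_{i,j}(1-x_iy_j)^{-1}=\sum_{\lambda}S_\lambda(x)S_\lambda(y)$: specializing $y_1=\cdots=y_k=1$ and all remaining $y_j=0$ (and $x_i=t_i$) turns the left-hand side into $\prod_{i=1}^d(1-t_i)^{-k}$ and the right-hand side into $\sum_{\lambda}S_\lambda(\mathrm{T}_d)\,S_\lambda(1^{k})$, where $S_\lambda(1^{k})$ vanishes unless $\lambda$ has at most $k$ parts. This yields the decomposition directly with $\eta_\lambda=S_\lambda(1^{k})=\dim W_k(\lambda)$, and would be my fallback route if the inductive bookkeeping becomes cumbersome.
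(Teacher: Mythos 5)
Your proposal is correct. Note that the paper does not actually prove this proposition: it is stated as well known, and the reader is referred to Boumova and Drensky \cite{BD}, so there is no in-text argument to compare yours against. Both of your routes are sound. The inductive one correctly reduces the claim to the identity $\dim W_k(\lambda)=\sum_{\nu}\dim W_{k-1}(\nu)$, the sum over partitions $\nu$ interlacing $\lambda$, which is exactly the $GL_k\downarrow GL_{k-1}$ branching rule specialized at $t_1=\cdots=t_k=1$; the bookkeeping with Case 1 of the Young rule is right, including the observation that attaching a horizontal strip to a diagram with at most $k-1$ rows yields one with at most $k$ rows, so only partitions in at most $k$ parts can occur. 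The Cauchy-identity specialization is the standard one-line derivation and is cleaner: since each coefficient of a monomial in the $x$'s in $\prod_{i,j}(1-x_iy_j)^{-1}$ is a polynomial in the $y$'s, the substitution $y_1=\cdots=y_k=1$, $y_{k+1}=\cdots=0$ is legitimate at the level of formal power series, and $S_\lambda(1^k)=\dim W_k(\lambda)$ vanishes precisely when $\lambda$ has more than $k$ parts, which accounts for the restriction on the summation. Either argument is a complete proof of the proposition.
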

In the general case there is an easy formula which translates the action of $Y$ on $g(\mathrm{T}_d)$ in the language of its multiplicity series.

\begin{proposition}[Drensky and Genov \cite{DG1}]\label{Youngoperatormult}
Let $g(\mathrm{T}_d)\in \mathbb{C}[[\mathrm{T}_d]]^{S_d}$. Then
\begin{equation*}
Y(M(g;\mathrm{T}_d))= \prod\limits_{i=1}^d \dfrac{1}{1-t_i}\sum (-t_2)^{\varepsilon_2}\cdots (-t_d)^{\varepsilon_d}
M(g; t_1t_2^{\varepsilon_2},t_2^{1-\varepsilon_2}t_3^{\varepsilon_3},\ldots, t_{d-1}^{1-\varepsilon_{d-1}}t_d^{\varepsilon_d}, t_d^{1-\varepsilon_d}),
\end{equation*}
where the summation runs on all $\varepsilon_2,\ldots,\varepsilon_d\in\{0,1\}$.
\end{proposition}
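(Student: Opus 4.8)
The plan is to compute directly the multiplicity series $M\left(\left(\prod_{i=1}^d \frac{1}{1-t_i}\right) g; \mathrm{T}_d\right)$ appearing in Definition \ref{Youngoperator} and match it with the claimed substitution formula. First I would rewrite the factor $\prod_{i=1}^d \frac{1}{1-t_i}$ as the sum $\sum_{m\geq 0} S_{(m)}(\mathrm{T}_d)$ of complete homogeneous symmetric functions. Writing $g(\mathrm{T}_d)=\sum_\mu m_\mu S_\mu(\mathrm{T}_d)$ and applying Case 1 of the Young rule to each product $S_{(m)}(\mathrm{T}_d)S_\mu(\mathrm{T}_d)$, then summing over all $m$, the Schur expansion of $\left(\prod_i \frac{1}{1-t_i}\right)g$ acquires the coefficient of $S_\lambda(\mathrm{T}_d)$ equal to $\sum_\mu m_\mu$, where $\mu$ runs over all partitions such that $\lambda/\mu$ is a horizontal strip, i.e.
\[
\lambda_1\geq\mu_1\geq\lambda_2\geq\mu_2\geq\cdots\geq\lambda_d\geq\mu_d\geq 0.
\]
Hence $Y(M(g;\mathrm{T}_d))=\sum_\lambda c_\lambda\mathrm{T}_d^\lambda$ with $c_\lambda=\sum_{\mu\text{ interlacing }\lambda}m_\mu$.

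Next I would compute the generating function $\sum_\lambda c_\lambda\mathrm{T}_d^\lambda$ by interchanging the order of summation: fixing $\mu$ and summing the monomial $t_1^{\lambda_1}\cdots t_d^{\lambda_d}$ over all $\lambda$ interlacing $\mu$ from above. The interlacing conditions decouple the $\lambda_i$, since $\lambda_1$ ranges over $[\mu_1,\infty)$ while each $\lambda_i$ with $2\leq i\leq d$ ranges over the finite interval $[\mu_i,\mu_{i-1}]$. Summing the resulting geometric series gives
\[
\sum_{\lambda}t_1^{\lambda_1}\cdots t_d^{\lambda_d}=\frac{t_1^{\mu_1}}{1-t_1}\prod_{i=2}^d\frac{t_i^{\mu_i}-t_i^{\mu_{i-1}+1}}{1-t_i},
\]
so that
\[
Y(M(g;\mathrm{T}_d))=\frac{1}{\prod_{i=1}^d(1-t_i)}\sum_\mu m_\mu\,t_1^{\mu_1}\prod_{i=2}^d\left(t_i^{\mu_i}-t_i^{\mu_{i-1}+1}\right).
\]

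Finally I would expand the telescoping product $\prod_{i=2}^d(t_i^{\mu_i}-t_i^{\mu_{i-1}+1})$ into its $2^{d-1}$ terms indexed by $\varepsilon_2,\ldots,\varepsilon_d\in\{0,1\}$, selecting $t_i^{\mu_i}$ when $\varepsilon_i=0$ and $-t_i^{\mu_{i-1}+1}$ when $\varepsilon_i=1$. Collecting the constant powers of the $t_i$ (coming from the ``$+1$'' in the exponents, together with the signs) yields exactly the prefactor $(-t_2)^{\varepsilon_2}\cdots(-t_d)^{\varepsilon_d}$. The crux is the bookkeeping of the remaining $\mu$-dependent exponents: tracking which variable carries each $\mu_j$, one checks that $\mu_1$ is attached to $t_1t_2^{\varepsilon_2}$, each $\mu_j$ with $2\leq j\leq d-1$ to $t_j^{1-\varepsilon_j}t_{j+1}^{\varepsilon_{j+1}}$, and $\mu_d$ to $t_d^{1-\varepsilon_d}$. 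Summing over $\mu$ then recognizes precisely $M(g;t_1t_2^{\varepsilon_2},\ldots,t_d^{1-\varepsilon_d})$, and the substitution list of the proposition drops out term by term.

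This exponent-matching (and the verification of the boundary indices $j=1$ and $j=d$, where the pattern $t_j^{1-\varepsilon_j}t_{j+1}^{\varepsilon_{j+1}}$ degenerates) is the one step where care is genuinely required; the remaining manipulations are routine geometric summations, valid in the ring $\mathbb{C}[[\mathrm{T}_d]]$ of formal power series, and the multiplicity series only ever sees partitions $\mu$ with $\mu_1\geq\cdots\geq\mu_d\geq 0$, so no convergence or range issues arise.
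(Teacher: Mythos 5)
Your proposal is correct, but note that the paper itself offers no proof of this proposition to compare against: it is imported verbatim from Drensky and Genov \cite{DG1}, and is used in the paper as a black box. What you have written is a correct, self-contained derivation, and it is essentially the canonical argument (the one underlying the cited reference). All three stages check out: (1) writing $\prod_i(1-t_i)^{-1}=\sum_{m\ge 0}S_{(m)}(\mathrm{T}_d)$ and applying Case 1 of the Young rule gives the coefficient of $S_\lambda$ in the Young-derived series as the finite sum of $m_\mu$ over $\mu$ interlacing $\lambda$ from below; (2) exchanging the order of summation is legitimate formally because, for fixed $\lambda$, only finitely many $\mu$ interlace it, and the decoupled geometric sums give
\[
\frac{t_1^{\mu_1}}{1-t_1}\prod_{i=2}^d\frac{t_i^{\mu_i}-t_i^{\mu_{i-1}+1}}{1-t_i};
\]
(3) in the binary expansion, the term indexed by $(\varepsilon_2,\ldots,\varepsilon_d)$ equals
\[
(-t_2)^{\varepsilon_2}\cdots(-t_d)^{\varepsilon_d}\,
\bigl(t_1t_2^{\varepsilon_2}\bigr)^{\mu_1}
\prod_{j=2}^{d-1}\bigl(t_j^{1-\varepsilon_j}t_{j+1}^{\varepsilon_{j+1}}\bigr)^{\mu_j}
\bigl(t_d^{1-\varepsilon_d}\bigr)^{\mu_d},
\]
since $\mu_1$ picks up exponent $1$ from $t_1$ and $\varepsilon_2$ from the selected $t_2^{\mu_1+1}$ factor, each intermediate $\mu_j$ picks up $1-\varepsilon_j$ from $t_j$ and $\varepsilon_{j+1}$ from $t_{j+1}$, and $\mu_d$ only ever appears through $t_d^{1-\varepsilon_d}$; summing over $\mu$ then produces exactly $M(g;t_1t_2^{\varepsilon_2},\ldots,t_d^{1-\varepsilon_d})$. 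Your closing remark is also the right one to make: although some substituted variables can degenerate to $1$ (when $\varepsilon_j=1$ and $\varepsilon_{j+1}=0$), the substituted series is still a well-defined formal power series because $\mu$ ranges over partitions and the first substituted variable always carries $t_1$, forcing $\mu_1$ (hence all $\mu_j$) to be bounded on each monomial.
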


Consider the operator $\widehat{Y}$ and notice that
\[
\widehat{Y}^{j}(M(1); \mathrm{T}_d)= \left(\dfrac{1}{2}\prod\limits_{i=1}^d (1-t_i)+\dfrac{1}{2}\displaystyle\prod\limits_{i=1}^d(1+t_i)\right )^j.
\]
Now, we want to know which Schur functions participate in the decomposition of $\widehat{Y}^{j}(M(1); \mathrm{T}_d)$,
that is we want to express $\widehat{Y}^{j}(M(1); \mathrm{T}_d)$ as a linear combination of Schur functions. The next is a direct consequence of the Young rule.

\begin{proposition}\label{Ynovo}
Let $j\geq 1$ and
\[
\left(\dfrac{1}{2}\prod\limits_{i=1}^d (1-t_i)+\dfrac{1}{2}\displaystyle\prod\limits_{i=1}^d(1+t_i)\right )^j =\sum\limits_{\rho} m_{\rho} S_{\rho}(\mathrm{T}_d).
\]
If $m_{\rho}\neq 0$, then
$\rho=(j^{s_1},(j-1)^{s_2},\ldots , 1^{s_j})\vdash 2(n_1+\cdots+n_j)$  for some $n_1,\ldots, n_j$ integers and $(s_1,\ldots, s_j)\in{\mathbb{Z}}^j_{\geq 0}$.
Equivalently, if $m_{\rho}\neq 0$, then $\rho$ has at most $j$ columns.
\end{proposition}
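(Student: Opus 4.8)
The plan is to reduce the whole statement to a repeated application of Case 2 of the Young rule. First I would rewrite a single factor in terms of elementary symmetric polynomials. Since $\prod_{i=1}^d(1+t_i)=\sum_{m=0}^d e_m(\mathrm{T}_d)$ and $\prod_{i=1}^d(1-t_i)=\sum_{m=0}^d (-1)^m e_m(\mathrm{T}_d)$, the odd-degree terms cancel in the average, giving
\[
\frac{1}{2}\prod_{i=1}^d(1-t_i)+\frac{1}{2}\prod_{i=1}^d(1+t_i)=\sum_{\substack{0\le m\le d\\ m\text{ even}}}e_m(\mathrm{T}_d)=\sum_{\substack{0\le m\le d\\ m\text{ even}}}S_{(1^m)}(\mathrm{T}_d),
\]
where I use the identity $e_m(\mathrm{T}_d)=S_{(1^m)}(\mathrm{T}_d)$ recalled in the Preliminaries. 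Thus a single factor is a sum of single-column Schur functions, each having an even number of boxes and at most $d$ of them.

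Raising this to the $j$-th power, I would expand
\[
\Big(\sum_{m\text{ even}} S_{(1^m)}(\mathrm{T}_d)\Big)^j=\sum_{m_1,\ldots,m_j\text{ even}} S_{(1^{m_1})}(\mathrm{T}_d)\cdots S_{(1^{m_j})}(\mathrm{T}_d),
\]
so every $\rho$ with $m_\rho\neq 0$ must occur in some product of $j$ single-column Schur functions whose indices $m_1,\ldots,m_j$ are even (and each at most $d$, so that Case 2 of the Young rule applies at every stage).

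The core step is an induction on the number of factors, carrying along two invariants simultaneously: the size of the largest part and the parity of the total number of boxes. Writing $m_t=2n_t$, at each stage I multiply the current symmetric function by $S_{(1^{m_t})}$, which by Case 2 of the Young rule adds exactly $m_t$ boxes to each participating diagram so that no two new boxes lie in the same row. In particular every row receives at most one new box per factor, so after $j$ factors each row of $\rho$ contains at most $j$ boxes, i.e. $\rho_1\le j$; equivalently $D_\rho$ has at most $j$ columns, which is precisely the asserted shape $\rho=(j^{s_1},(j-1)^{s_2},\ldots,1^{s_j})$. At the same time, since each $m_t$ is even the total $|\rho|=m_1+\cdots+m_j=2(n_1+\cdots+n_j)$ remains even, yielding $\rho\vdash 2(n_1+\cdots+n_j)$.

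The argument is essentially a bookkeeping exercise once the Young rule is available, so I do not expect a serious obstacle. The only point that demands care is to propagate the two invariants—largest part $\le$ number of factors, and even total number of boxes—\emph{together} through the induction, and to record that the bound $m\le d$ keeps Case 2 of the Young rule applicable at each multiplication.
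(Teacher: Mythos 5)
Your argument is correct and follows exactly the route the paper intends: the paper gives no detailed proof, stating only that the proposition is ``a direct consequence of the Young rule,'' and your decomposition of the factor as $\sum_{m\ \text{even}}S_{(1^m)}(\mathrm{T}_d)$ followed by $j$ applications of Case 2 of the Young rule is precisely the intended verification. Both invariants (each row gains at most one box per factor, and the total box count stays even) are propagated correctly.
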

The following lemma allows us to describe $M'(f(\mathrm{T}_d)S_{(1^2)}(T_d); \mathrm{V}_d)$
in terms of the multiplicity series  $M'(f( \mathrm{T}_d); \mathrm{V}_d)$, where $f(\mathrm{T}_d)$ is a symmetric function.

\begin{lemma}\label{d=2}
Let $f(\mathrm{T}_d)\in\mathbb{C}[[\mathrm{T}_d]]^{S_d}$. Then
\begin{equation*}
\begin{split}
M'(f(\mathrm{T}_d)S_{(1^2)}(\mathrm{T}_d);\mathrm{V}_d)= &v_2M'(f(\mathrm{T}_d;\mathrm{V}_d)+
 v_1\sum\limits_{j=2}^{d-1}\frac{v_{j+1}}{v_j}g_j((M'(f(\mathrm{T}_d); v_1,\ldots, v_d))\\
& + \sum\limits_{i=1}^{d-2}\frac{v_{i+2}}{v_i} g_i((M'(f(\mathrm{T}_d); v_1,\ldots, v_d))\\
& + \sum\limits_{\substack{1\leq i,j\leq d-1\\ i+1<j} }\frac{v_{i+1}}{v_i}\frac{v_{j+1}}{v_j}g_{ij}((M'(f(\mathrm{T}_d); v_1,\ldots, v_d)),
\end{split}
\end{equation*}
where
\[
g_j((M'(f(\mathrm{T}_d); \mathrm{V}_d))=M'(f(\mathrm{T}_d); v_1,\ldots, v_d)-M'(f(\mathrm{T}_d);v_1,\ldots,v_{j-1},0,v_{j+1},\ldots,v_d),
\]
\begin{equation*}
\begin{split}
g_{ij}((M'(f(\mathrm{T}_d);\mathrm{V}_d))= & M'(f(\mathrm{T}_d);v_1,\ldots, v_d)\\
+& M'(f(\mathrm{T}_d); v_1,\ldots,v_{i-1},0,v_{i+1},\ldots,v_j,0,v_{j+1},\ldots,v_d)\\
-& M'(f(\mathrm{T}_d);v_1,\ldots,v_{i-1},0,v_{i+1},\ldots,v_d)\\
-& M'(f(\mathrm{T}_d);v_1,\ldots,v_{j-1},0,v_{j+1},\ldots,v_d).
\end{split}
\end{equation*}
\end{lemma}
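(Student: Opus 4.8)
The plan is to expand $f(\mathrm{T}_d)=\sum_{\mu}m_{\mu}S_{\mu}(\mathrm{T}_d)$ and exploit the fact (recorded just above in the excerpt) that $S_{(1^2)}=e_2$, so that \textbf{Case 2} of the Young rule applies: $f(\mathrm{T}_d)S_{(1^2)}(\mathrm{T}_d)=\sum_{\mu}m_{\mu}\sum_{\lambda}S_{\lambda}(\mathrm{T}_d)$, where for each $\mu$ the inner sum runs over all $\lambda$ obtained from $D_{\mu}$ by adding two boxes lying in \emph{distinct rows}. Passing to multiplicity series via the bijection $M'$, it then suffices to track how each such box-addition transforms the monomial $M'(S_{\mu};\mathrm{V}_d)=v_1^{\mu_1-\mu_2}\cdots v_{d-1}^{\mu_{d-1}-\mu_d}v_d^{\mu_d}$ in the $v$-variables, and to sum these transforms over all admissible placements.

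The key computational observation I would isolate first is that adding a single box to row $i$ sends $\mu_i\mapsto\mu_i+1$, which lowers the exponent of $v_{i-1}$ by one and raises that of $v_i$ by one; equivalently it multiplies $M'(S_{\mu})$ by $v_i/v_{i-1}$ (with the convention $v_0=1$), subject to the partition-validity constraint that the resulting exponent of $v_{i-1}$ stays nonnegative, i.e.\ $\mu_{i-1}>\mu_i$ unless a box is simultaneously added to row $i-1$. I would then classify the unordered pair of rows $\{i,j\}$, $i<j$, receiving the two boxes into the four mutually exclusive and exhaustive cases $\{1,2\}$; adjacent $\{i,i+1\}$ with $i\geq 2$; $\{1,j\}$ with $j\geq 3$; and non-adjacent $\{i,j\}$ with $i\geq 2$, $j\geq i+2$. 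Computing the monomial shift and the surviving validity constraint in each case and re-indexing (with $k=i-1$ in the single cases and $a=i-1$, $b=j-1$ in the double case), these four cases produce exactly the four summands $v_2M'(f)$, then $v_1\sum_{j=2}^{d-1}(v_{j+1}/v_j)g_j$, then $\sum_{i=1}^{d-2}(v_{i+2}/v_i)g_i$, and finally $\sum_{i+1<j}(v_{i+1}/v_i)(v_{j+1}/v_j)g_{ij}$.

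The role of the operators $g_j$ and $g_{ij}$ is the encoding of these validity constraints: since substituting $v_j\mapsto 0$ annihilates precisely the monomials in which $v_j$ occurs with positive exponent, $g_j(M'(f))=M'(f)-M'(f)|_{v_j=0}$ selects the terms with $\mu_j>\mu_{j+1}$, and $g_{ij}$ is the two-variable inclusion–exclusion selecting the terms with both $\mu_i>\mu_{i+1}$ and $\mu_j>\mu_{j+1}$. I would check that these are exactly the constraints dictated by each case: an adjacent pair at rows $i,i+1$ ($i\geq 2$) is legal iff $\mu_{i-1}>\mu_i$, matching $g_{i-1}$; a non-adjacent pair at rows $i,j$ is legal iff $\mu_{i-1}>\mu_i$ and $\mu_{j-1}>\mu_j$, matching $g_{(i-1)(j-1)}$; while the two rows-$1$ cases carry no lower constraint because a box added to the first row is always legal (formally $v_0=1$). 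A quick sanity check at $d=2$, where only the placement $\{1,2\}$ survives and $M'(fS_{(1,1)})=v_2M'(f)$, confirms the normalization.

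The main obstacle is the careful bookkeeping of these validity constraints and their asymmetry. A box added to row $1$ is unconstrained, which is exactly why the cases containing row $1$ (yielding the $v_2M'(f)$ term and the $v_1\sum\cdots$ term) look structurally different from the cases with $i\geq 2$; and for an adjacent pair one must notice that the two opposite contributions to the exponent of $v_i$ cancel, so that only the single constraint on $v_{i-1}$ survives rather than two separate ones. Keeping the four-case division simultaneously exhaustive and disjoint, and matching each validity constraint to the correct $g$-operator after the index shift, is the crux of the argument; the remaining work is routine monomial manipulation.
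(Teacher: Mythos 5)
Your proposal is correct and follows essentially the same route as the paper: reduce to a single Schur function $S_\mu$ by linearity, apply Case 2 of the Young rule to $S_\mu S_{(1^2)}$, split the two-box placements into the same four cases (rows $\{1,2\}$, row $1$ plus a lower row, an adjacent pair below row $1$, and a non-adjacent pair below row $1$), track the resulting monomial shifts in the $v$-variables, and encode the validity constraints $\mu_j>\mu_{j+1}$ via the vanishing of $g_j$ and $g_{ij}$ under the substitution $v_j\mapsto 0$. The only cosmetic difference is that you phrase the monomial shift uniformly as multiplication by $v_i/v_{i-1}$ with $v_0=1$, whereas the paper lists the four resulting partitions explicitly; the content is identical.
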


\begin{proof}
Notice that it is sufficient to prove the lemma for $f(\mathrm{T}_d)=S_{\mu}(\mathrm{T}_d)$,
where $\mu=(\mu_1,\ldots,\mu_d)$ is a partition in no more than $d$ parts.
Rewriting  $M'(S_{\mu}(\mathrm{T}_d;\mathrm{V}_d))$ as
\[
M'(S_{\mu}(\mathrm{T}_d);\mathrm{V}_d)=v_1^{p_1}\cdots v_d^{p_d}
\]
where $p_i=\mu_i-\mu_{i+1}$, $i=1,\ldots,d-1$ and $p_d=\mu_d$,
by the Young rule, we have that $S_{\mu}(\mathrm{T}_d)S_{(1^2)}(\mathrm{T}_d)$ is a linear combination of $S_{\lambda}(\mathrm{T}_d)$, where
\begin{itemize}
\item $\lambda=(\mu_1+1,\mu_2+1,\mu_3,\ldots,\mu_d)$;
\item $\lambda=(\mu_1+1,\ldots,\mu_j,\mu_{j+1}+1,\ldots,\mu_d)$, if $\mu_j>\mu_{j+1}$ and $j\geq 2$;
\item $\lambda=(\mu_1,\ldots,\mu_i,\mu_{i+1}+1,\mu_{i+2}+1,\ldots,\mu_d)$, if $\mu_i>\mu_{i+1}$;
\item $\lambda=(\mu_1+1,\ldots, \mu_i,\mu_{i+1}+1,\ldots,\mu_j,\mu_{j+1}+1,\ldots,\mu_d)$, if $\mu_i>\mu_{i+1}$, $\mu_j>\mu_{j+1}$ and $i+1<j$.
\end{itemize}
In the language of multiplicity series, this means that $M'(S_{\mu}(\mathrm{T}_d)S_{(1,1)}(\mathrm{T}_d))$ is a linear combination of the following terms
\begin{itemize}
\item $v_2(v_1^{p_1}\cdots v_d^{p_d})=v_2 M'(S_{\mu}(\mathrm{T}_d;\mathrm{V}_d))$;
\item $\displaystyle v_1\frac{v_{j+1}}{v_j}M'(S_{\mu}(\mathrm{T}_d;\mathrm{V}_d))$, if $\mu_j>\mu_{j+1}$ and $j\geq 2$;
\item $\displaystyle \frac{v_{i+2}}{v_i}M'(S_{\mu}(\mathrm{T}_d,\mathrm{V}_d))$, if $\mu_i>\mu_{i+1}$;
\item $\displaystyle \frac{v_{i+1}}{v_i}\frac{v_{j+1}}{v_j}M'(S_{\mu}(\mathrm{T}_d,\mathrm{V}_d))$, if $\mu_i>\mu_{i+1}$, $\mu_j>\mu_{j+1}$ and $i+1<j$.
\end{itemize}
Now, observe that
\[
g_j((M'(f(\mathrm{T}_d);\mathrm{V}_d)) = \left\{ \begin{array}{lcc}
             M'(f(\mathrm{T}_d);\mathrm{V}_d), &   if  & p_j>0, \\
             \\ 0, &  if & p_j=0; \\
             \end{array}
\right.
\]
\[
g_{ij}((M'(f(\mathrm{T}_d);\mathrm{V}_d)) = \left\{ \begin{array}{lcc}
             M'(f(\mathrm{T}_d);\mathrm{V}_d), &   if  & p_i>0,\ p_j>0, \\
             \\ 0, &  for & {\it\textrm{all other cases.} }\\
             \end{array}
\right.
\]
Then the result follows easily.
\end{proof}

By Lemma \ref{d=2}, we have the following corollary.

\begin{corollary}\label{Md2}
If $f(\mathrm{T}_2)\in\mathbb {C}[[\mathrm{T}_2]]^{S_2}$, then
\[
\widehat{Y}(M(f;\mathrm{T}_2))=(1+t_1t_2)M(f; T_2)
=\left[\dfrac{1}{2}\prod\limits_{i=1}^2 (1-t_i)+\dfrac{1}{2}\displaystyle\prod\limits_{i=1}^2(1+t_i)]\right] M(f; T_2).
\]
\end{corollary}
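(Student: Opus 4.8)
The plan is to reduce everything to the $d=2$ specialization of Lemma \ref{d=2}, where the bookkeeping collapses completely. First I would verify the second equality in the statement, which is purely a matter of expanding the bracket appearing in Definition \ref{Yparnovo} for $d=2$: since
\[
\tfrac{1}{2}(1-t_1)(1-t_2) + \tfrac{1}{2}(1+t_1)(1+t_2) = \tfrac{1}{2}\big[(1 - t_1 - t_2 + t_1t_2) + (1 + t_1 + t_2 + t_1t_2)\big] = 1 + t_1t_2,
\]
the operator $\widehat{Y}$ acts in degree $d=2$ simply by multiplication of the underlying symmetric function by $1 + t_1t_2$. Here I would also record that $t_1t_2 = S_{(1^2)}(\mathrm{T}_2) = e_2(\mathrm{T}_2)$.

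Next, by Definition \ref{Yparnovo} and the linearity of the multiplicity-series map $M$ (which merely reads off the Schur coefficients of a symmetric function), I would write
\[
\widehat{Y}(M(f;\mathrm{T}_2)) = M\big(f(\mathrm{T}_2)(1 + S_{(1^2)}(\mathrm{T}_2));\mathrm{T}_2\big) = M(f;\mathrm{T}_2) + M\big(f(\mathrm{T}_2)S_{(1^2)}(\mathrm{T}_2);\mathrm{T}_2\big).
\]
The first summand is already in the desired form, so the whole task reduces to identifying the second summand with $t_1t_2\,M(f;\mathrm{T}_2)$.

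For this I would invoke Lemma \ref{d=2} with $d=2$. The key observation is that every correction term there is indexed by a range that becomes empty: the sum $\sum_{j=2}^{d-1}$ is empty because $d-1 = 1 < 2$, the sum $\sum_{i=1}^{d-2}$ is empty because $d-2 = 0$, and the double sum over pairs with $i+1 < j$ and $1 \le i,j \le d-1 = 1$ has no admissible indices. Hence the formula degenerates to
\[
M'(f(\mathrm{T}_2)S_{(1^2)}(\mathrm{T}_2);\mathrm{V}_2) = v_2\,M'(f(\mathrm{T}_2);\mathrm{V}_2).
\]
Translating back through $v_2 = t_1t_2$, and recalling that $M'(\cdot;\mathrm{V}_2)$ is just $M(\cdot;\mathrm{T}_2)$ rewritten in the variables $\mathrm{V}_2$, this reads $M(f\,S_{(1^2)};\mathrm{T}_2) = t_1t_2\,M(f;\mathrm{T}_2)$. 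Substituting into the previous display yields $(1 + t_1t_2)M(f;\mathrm{T}_2)$, completing the argument.

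Since this is a direct corollary, I do not expect a genuine obstacle: the only point requiring care is checking that the index ranges of Lemma \ref{d=2} really do vanish at $d=2$, so that no hidden correction term survives. The one mild subtlety is the passage between the $M$- and $M'$-normalizations, which is harmless because the two are related by the invertible change of variables $v_1 = t_1,\ v_2 = t_1t_2$.
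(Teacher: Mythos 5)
Your proof is correct and follows exactly the route the paper intends: the paper derives Corollary \ref{Md2} directly from Lemma \ref{d=2} (stating only ``By Lemma \ref{d=2}, we have the following corollary''), and your write-up simply makes explicit the two ingredients involved — the identity $\tfrac12(1-t_1)(1-t_2)+\tfrac12(1+t_1)(1+t_2)=1+t_1t_2$ and the observation that at $d=2$ all correction sums in Lemma \ref{d=2} are empty, leaving $M'(fS_{(1^2)};\mathrm{V}_2)=v_2M'(f;\mathrm{V}_2)$. Your check of the empty index ranges and of the $M$ versus $M'$ normalization is accurate, so nothing is missing.
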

Note that if we want to describe $M' (S_{\lambda}(\mathrm{T}_d)S_{(1^k)}(T_d); \mathrm{V}_d)$ in terms of the multiplicity series  $M'(S_{\lambda}( \mathrm{T}_d)$ where $\lambda$ is a partition in no more than $d$ parts and $k\leq d$ a positive integer, we need expressions obtained from  $M'(S_{\lambda}(\mathrm(T)_d);\mathrm{V}_d)$ making $v_{j_1}=v_{j_2}=\cdots =v_{j_r}=0$ where $1\leq r\leq s$ and $j_1\leq\cdots\leq j_r$.

%



\section{Hilbert series and multiplicity series of $UT_n(E)$}
In this section we give an algorithm to calculate the multiplicities in the cocharacter sequence of $UT_n(E)$. We follow the ideas developed in \cite{BD}.\\
\indent
The next result follows from Corollary \ref{hilcoro}, Theorem \ref{tidealutn} and Proposition \ref{seriedeE}.

\begin{theorem}\label{HUk}
The Hilbert series  $H(UT_n(E); \mathrm{T}_d)$ of the algebra $F_d(UT_n(E))$ is
\begin{eqnarray*}
H(UT_n(E));\mathrm{T}_d) & = &\sum_{j=1}^{n} \binom{n}{j} {\left(\dfrac{1}{2}+\dfrac{1}{2}\displaystyle\prod\limits_{i=1}^d\dfrac{1+t_i}{1-t_i}\right)}^{j}{(t_1+\cdots t_d-1)}^{j-1}.
\end{eqnarray*}
\end{theorem}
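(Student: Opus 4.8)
The plan is to obtain the formula as an immediate consequence of the three cited results, essentially by substitution. First I would recall, as already noted in the Introduction, that the generator of $T(UT_n(E))$ furnished by Theorem \ref{tidealutn} is the product of $n$ triple commutators on disjoint variables, $[x_1,x_2,x_3]\cdots[x_{3n-2},x_{3n-1},x_{3n}]$. Since $[x_1,x_2,x_3]$ generates $T(E)$, this product generates the $T$-ideal power $T(E)^n$. Reading Abakarov's theorem this way yields the key identification
\[
T(UT_n(E)) = T(E)^n.
\]

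With this in hand, I would apply Corollary \ref{hilcoro} to $A = UT_n(E)$ and $A_1 = E$. The hypothesis $T(A) = T(A_1)^n$ required there is precisely the equality above, so the corollary gives
\[
H(UT_n(E);\mathrm{T}_d) = \sum_{j=1}^{n} \binom{n}{j}(t_1+\cdots+t_d-1)^{j-1} H(E;\mathrm{T}_d)^j.
\]
The final step is to replace each occurrence of $H(E;\mathrm{T}_d)$ by the closed form supplied by Proposition \ref{seriedeE}, namely $H(E;\mathrm{T}_d)=\frac{1}{2}+\frac{1}{2}\prod_{i=1}^d\frac{1+t_i}{1-t_i}$, which produces exactly the claimed expression.

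As for obstacles, there is no genuine difficulty once the cited results are granted; the argument is pure bookkeeping and the statement is really a corollary. The one point deserving a word of justification is the identification $T(UT_n(E))=T(E)^n$, which rests on recognizing Abakarov's generator as an $n$-fold product of disjoint copies of the generator $[x_1,x_2,x_3]$ of $T(E)$. After that, Corollary \ref{hilcoro}—itself an inductive consequence of the Formanek--Halpin product formula of Proposition \ref{Formanek}, where the binomial coefficients emerge from iterating the relation $H_n = H_{n-1}(1+(t_1+\cdots+t_d-1)H_1)+H_1$ and summing the resulting geometric-type telescope—together with Proposition \ref{seriedeE} does all the remaining work.
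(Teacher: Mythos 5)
Your argument is exactly the one the paper intends: the text introduces Theorem \ref{HUk} with the single sentence that it ``follows from Corollary \ref{hilcoro}, Theorem \ref{tidealutn} and Proposition \ref{seriedeE},'' and you have correctly spelled out how those three results combine, including the identification $T(UT_n(E))=T(E)^n$ via Abakarov's generator. The proposal is correct and matches the paper's (implicit) proof.
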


\noindent
From Definitions  \ref{Youngoperator} and \ref{Yparnovo} we derive
\begin{eqnarray*}
Y (\widehat{Y}(M(g); T_d))& =  & Y\left(M\left( g(T_d)\left[\dfrac{1}{2}\prod\limits_{i=1}^d (1-t_i)+\dfrac{1}{2}\displaystyle\prod\limits_{i=1}^d(1+t_i)\right]\right); \mathrm{T_d}\right) \\
& = & M\left(\displaystyle\prod\limits_{i=1}^d\dfrac{1}{1-t_i} g(T_d)\left[\dfrac{1}{2}\prod\limits_{i=1}^d (1-t_i)+\dfrac{1}{2}\displaystyle\prod\limits_{i=1}^d(1+t_i)\right]; \mathrm{T_d}\right)\\
& = & M \left(g(\mathrm{T}_d)\left[\dfrac{1}{2}+\dfrac{1}{2}\displaystyle\prod\limits_{i=1}^d\dfrac{1+t_i}{1-t_i}\right]; \mathrm{T}_d\right).
\end{eqnarray*}
Notice that $Y (\widehat{Y}(M(g); T_d))$ is well-defined because $g(\mathrm{T}_d)$
and  $\left(\dfrac{1}{2}+\dfrac{1}{2}\displaystyle\prod\limits_{i=1}^d\dfrac{1+t_i}{1-t_i}\right)$ are symmetric functions.
Hence, $g(\mathrm{T}_d)\left[\dfrac{1}{2}+\dfrac{1}{2}\displaystyle\prod\limits_{i=1}^d\dfrac{1+t_i}{1-t_i}\right]$ is symmetric.
Moreover we have $Y\circ \widehat{Y}=\widehat{Y}\circ Y$ too.\\
\indent
Using the previous result, we can obtain an expression for the multiplicity series of $UT_n(E)$. This is the content of the following corollary.

\begin{corollary}\label{MRK}
The multiplicity series of $UT_n(E)$ is
\[
M(UT_n(E);\mathrm{T}_d))  = \sum_{j=1}^{n} \sum\limits_{q=0}^{j-1} \sum\limits_{\lambda\vdash q}{(-1)}^{j-1-q}\binom{n}{j}\binom{j-1}{q} d_{\lambda}Z^j(\mathrm{T}_d^{\lambda}),
\]
where $d_{\lambda}$ is the degree of the irreducible $S_n$-character $\chi_{\lambda}$, $\mathrm{T}_d=t_1^{\lambda_1}\cdots t_d^{\lambda_d}$ and $Z=Y\circ\widehat{Y}$.
\end{corollary}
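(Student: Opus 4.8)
The plan is to derive the corollary directly from the Hilbert series formula of Theorem \ref{HUk} by passing to multiplicity series, using the operator identity $Z=Y\circ\widehat{Y}$ established in the display preceding the statement. The key observation is that the bracketed factor appearing in Theorem \ref{HUk} is exactly the series $H(E;\mathrm{T}_d)=\frac12+\frac12\prod_{i=1}^d\frac{1+t_i}{1-t_i}$ of Proposition \ref{seriedeE}, so that
\[
H(UT_n(E);\mathrm{T}_d)=\sum_{j=1}^n\binom{n}{j}H(E;\mathrm{T}_d)^j\,(t_1+\cdots+t_d-1)^{j-1}.
\]
Since $M$ is linear and $M(UT_n(E);\mathrm{T}_d)=M(H(UT_n(E);\mathrm{T}_d);\mathrm{T}_d)$, the whole computation reduces to evaluating $M$ on summands of the form $S_{\lambda}(\mathrm{T}_d)\,H(E;\mathrm{T}_d)^j$.

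First I would expand the two scalar factors into Schur functions. By the binomial theorem,
\[
(t_1+\cdots+t_d-1)^{j-1}=\sum_{q=0}^{j-1}(-1)^{j-1-q}\binom{j-1}{q}(t_1+\cdots+t_d)^q .
\]
Next, since $t_1+\cdots+t_d=S_{(1)}(\mathrm{T}_d)$, iterating the branching rule (Case 1 of the Young rule with $m=1$) gives the classical decomposition
\[
(t_1+\cdots+t_d)^q=\sum_{\lambda\vdash q}d_{\lambda}S_{\lambda}(\mathrm{T}_d),
\]
where $d_{\lambda}$, the number of standard tableaux of shape $\lambda$, equals the degree of $\chi_{\lambda}$. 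Substituting both expansions into the displayed formula for $H(UT_n(E);\mathrm{T}_d)$ writes it as a linear combination of terms $S_{\lambda}(\mathrm{T}_d)H(E;\mathrm{T}_d)^j$ with coefficients $(-1)^{j-1-q}\binom{n}{j}\binom{j-1}{q}d_{\lambda}$.

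Finally I would apply $M$ termwise. From the definition of the multiplicity series, $M(S_{\lambda};\mathrm{T}_d)=\mathrm{T}_d^{\lambda}$, and the identity $Y(\widehat{Y}(M(g);\mathrm{T}_d))=M(g\,H(E);\mathrm{T}_d)$, i.e.\ $Z(M(g))=M(g\,H(E))$, iterates---each intermediate product $g\,H(E)^k$ being symmetric by Proposition \ref{seriedeE}---to $Z^j(M(g))=M(g\,H(E)^j)$. Taking $g=S_{\lambda}$ yields $M(S_{\lambda}H(E)^j;\mathrm{T}_d)=Z^j(\mathrm{T}_d^{\lambda})$, and collecting the coefficients produces exactly the asserted formula. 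The computation is essentially bookkeeping once the operator identity is in hand; the only genuinely nontrivial inputs are the Schur decomposition of $(t_1+\cdots+t_d)^q$ and the linearity and iterability of $Z$, so the main point to check carefully is that $Z^j$ faithfully records multiplication by $H(E;\mathrm{T}_d)^j$ at the level of multiplicity series.
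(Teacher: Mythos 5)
Your proposal is correct and follows essentially the same route as the paper: binomial expansion of $(t_1+\cdots+t_d-1)^{j-1}$, the decomposition $(t_1+\cdots+t_d)^q=\sum_{\lambda\vdash q}d_\lambda S_\lambda(\mathrm{T}_d)$, and then termwise application of $M$ using $M(S_\lambda;\mathrm{T}_d)=\mathrm{T}_d^{\lambda}$ together with the iterated identity $Z^j(M(g))=M\bigl(g\,H(E;\mathrm{T}_d)^j\bigr)$. No gaps; your emphasis on checking that $Z^j$ faithfully records multiplication by $H(E;\mathrm{T}_d)^j$ is exactly the point the paper settles in the display preceding the corollary.
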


\begin{proof} Notice that
\[
{(t_1+\cdots + t_d-1)}^{j-1}= \sum\limits_{q=0}^{j-1}{(-1)}^{j-1-q}\binom{j-1}{q}{(t_1+\cdots +t_d)}^{q}
\]
and expanding the expression of $H(U_k(E);\mathrm{T_d})$ from Proposition \ref{HUk}, we get
\begin{eqnarray*}
H(UT_n(E);\mathrm{T}_d) & = &\sum_{j=1}^{n} \binom{n}{j} {\left(\dfrac{1}{2}+\dfrac{1}{2}\displaystyle\prod\limits_{i=1}^d\dfrac{1+t_i}{1-t_i}\right)}^{j}
\sum\limits_{q=0}^{j-1}{(-1)}^{j-1-q}\binom{j-1}{q}{(t_1+\cdots+ t_d)}^{q}.
\end{eqnarray*}
Using the well-known equality
\[
(t_1+\cdots+t_d)^q= S_{(1)}^{q}(\mathrm{T}_d)=\sum\limits_{\lambda\vdash q}d_{\lambda}S_{\lambda}(\mathrm{T}_d),
\]
where $d_{\lambda}$ is the degree of the irreducible $S_q$-character $\chi_{\lambda}$, we have
\begin{eqnarray*}
H(UT_n(E);\mathrm{T}_d) & = &\sum_{j=1}^{n} \binom{n}{j} {\left(\dfrac{1}{2}+\dfrac{1}{2}\displaystyle\prod\limits_{i=1}^d\dfrac{1+t_i}{1-t_i}\right)}^{j}
\sum\limits_{q=0}^{j-1}{(-1)}^{j-1-q}\binom{j-1}{q}\sum\limits_{\lambda\vdash q}d_{\lambda}S_{\lambda}(\mathrm{T}_d)\\
& = & \sum_{j=1}^{n} \sum\limits_{q=0}^{j-1} \sum\limits_{\lambda\vdash q}{(-1)}^{j-1-q}\binom{n}{j}\binom{j-1}{q} d_{\lambda}
{\left(\dfrac{1}{2}+\dfrac{1}{2}\displaystyle\prod\limits_{i=1}^d\dfrac{1+t_i}{1-t_i}\right)}^{j}S_{\lambda}(\mathrm{T}_d).
\end{eqnarray*}
Indeed, the multiplicity series of $S_{\lambda}(\mathrm{T}_d)$ is
\[
M(S_{\lambda}(\mathrm{T}_d); \mathrm{T}_d)=t_1^{\lambda_1}\cdots t_d^{\lambda_d}=\mathrm{T}_d^{\lambda}.
\]
Then
\[
M\left({\left(\dfrac{1}{2}+\dfrac{1}{2}\displaystyle\prod\limits_{i=1}^d\dfrac{1+t_i}{1-t_i}\right)}^{j}S_{\lambda}(\mathrm{T}_d); \mathrm{T}_{d}\right)
=Z^j (M(S_{\lambda}(\mathrm{T}_d); \mathrm{T}_d))= Z^j(\mathrm{T}_d^{\lambda}).
\]
Hence, the multiplicity series of $UT_n(E)$ is

\[
M(H(UT_n(E);\mathrm{T}_d))  =   \sum_{j=1}^{n} \sum\limits_{q=0}^{j-1} \sum\limits_{\lambda\vdash q}{(-1)}^{j-1-q}\binom{n}{j}\binom{j-1}{q} d_{\lambda}Z^j(\mathrm{T}_d^{\lambda}).
\]
\end{proof}
We want to describe those partitions $\lambda$ such that $m_{\lambda}(U_k(E))\neq 0$. Hence we get a sharpening result on the height of $\lambda$.

\begin{theorem}\label{mRk}
If $m_{\lambda}(UT_n(E))\neq 0$ and $\lambda=(\lambda_1,\ldots,\lambda_d)$, then $\lambda_{n+1}\leq 2n-1$.
\end{theorem}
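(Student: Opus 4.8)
The plan is to prove the contrapositive: if $\lambda_{n+1}\ge 2n$ (we may assume $d>n$, otherwise $\lambda_{n+1}=0$ and there is nothing to prove), then $m_\lambda(UT_n(E))=0$. By Corollary \ref{MRK} the multiplicity series of $UT_n(E)$ is a signed combination of the series $Z^j(\mathrm{T}_d^\nu)$ with $1\le j\le n$, $\nu\vdash q$ and $0\le q\le j-1$, where $Z=Y\circ\widehat Y$. Each summand equals $Z^j(\mathrm{T}_d^\nu)=M\big(H(E;\mathrm{T}_d)^jS_\nu(\mathrm{T}_d);\mathrm{T}_d\big)$, the multiplicity series of a genuine product of Schur functions, so its coefficients are nonnegative. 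It therefore suffices to show that no partition $\mu$ with $\mu_{n+1}\ge 2n$ occurs in any single product $H(E;\mathrm{T}_d)^jS_\nu(\mathrm{T}_d)$; then $\mathrm{T}_d^\mu$ has coefficient $0$ in every summand and the signs $(-1)^{j-1-q}$ cannot create a nonzero total coefficient.

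First I would record how $Z$ acts on shapes. Writing $\prod_i(1-t_i)^{-1}=\sum_{m\ge0}S_{(m)}(\mathrm{T}_d)$ and $\tfrac12\prod_i(1-t_i)+\tfrac12\prod_i(1+t_i)=\sum_{k\ \mathrm{even}}S_{(1^k)}(\mathrm{T}_d)$, Definitions \ref{Youngoperator} and \ref{Yparnovo} show that one application of $\widehat Y$ multiplies by some $S_{(1^k)}$ with $k$ even, and one application of $Y$ multiplies by some $S_{(m)}$. By the two cases of the Young rule, the former adds a vertical strip (at most one new box per row, so $\sigma'_r\le\sigma_r+1$ for the new shape $\sigma'$) and the latter adds a horizontal strip (at most one new box per column, so $\sigma'_r\le\sigma_{r-1}$). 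Since $Z=Y\circ\widehat Y$ and $Z^j$ is applied to $S_\nu$, every $\mu$ occurring in $H(E;\mathrm{T}_d)^jS_\nu(\mathrm{T}_d)=Z^j(S_\nu(\mathrm{T}_d))$ arises from $\nu$ by applying, in order, the $2j$ strip operations $V_1,H_1,\dots,V_j,H_j$, with each $V_s$ a vertical strip and each $H_s$ a horizontal strip.

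The core is a bound on how deep a long row can reach. Let $\sigma^{(0)}=\nu,\sigma^{(1)},\dots,\sigma^{(2j)}=\mu$ be the intermediate shapes, and let $h_t,v_t$ be the numbers of horizontal, resp. vertical, strips among the first $t$ operations. I would prove, by induction on $t$, the estimate
\[
\sigma^{(t)}_r\le \nu_{\,r-h_t}+v_t \qquad (r>h_t),
\]
with the convention $\nu_i=+\infty$ for $i\le0$ and $\nu_i=0$ for $i>\ell(\nu)$. The base case $t=0$ is $\sigma^{(0)}_r=\nu_r$. For the step, if $O_{t+1}$ is horizontal then for $r>h_{t+1}=h_t+1$ one has $\sigma^{(t+1)}_r\le\sigma^{(t)}_{r-1}\le\nu_{(r-1)-h_t}+v_t=\nu_{r-h_{t+1}}+v_{t+1}$, and if $O_{t+1}$ is vertical then $\sigma^{(t+1)}_r\le\sigma^{(t)}_r+1\le\nu_{r-h_t}+v_t+1=\nu_{r-h_{t+1}}+v_{t+1}$. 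At the end $h_{2j}=v_{2j}=j$, so taking $r=j+1$ gives $\mu_{j+1}\le\nu_1+j\le(j-1)+j=2j-1$, using $\nu_1\le|\nu|=q\le j-1$.

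Finally, since the parts of $\mu$ are weakly decreasing and $j\le n$,
\[
\mu_{n+1}\le\mu_{j+1}\le 2j-1\le 2n-1 .
\]
Hence every $\mu$ appearing in any term of Corollary \ref{MRK} satisfies $\mu_{n+1}\le 2n-1$, so a partition $\lambda$ with $\lambda_{n+1}\ge 2n$ appears in none of them and has $m_\lambda(UT_n(E))=0$; equivalently, $m_\lambda(UT_n(E))\neq0$ forces $\lambda_{n+1}\le 2n-1$. The only delicate points I anticipate are bookkeeping ones: reading off from the Young rule that $\widehat Y$ contributes exactly vertical strips (of even size, which is irrelevant for the estimate) and $Y$ exactly horizontal strips, and justifying the term-by-term argument with nonnegative coefficients so that the signs in Corollary \ref{MRK} play no role. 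The depth-penetration induction above is the substantive step, and it uses nothing beyond the two interlacing inequalities for strips.
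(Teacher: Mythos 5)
Your proof is correct and follows essentially the same route as the paper's: both start from Corollary \ref{MRK}, split $H(E;\mathrm{T}_d)^j$ into $j$ horizontal-strip factors $\prod_i(1-t_i)^{-1}$ and $j$ vertical-strip factors $\tfrac{1}{2}\prod_i(1-t_i)+\tfrac{1}{2}\prod_i(1+t_i)$, and use the Young rule to bound $\lambda_{j+1}$ by $j+q\le 2n-1$. Your explicit depth-penetration induction $\sigma^{(t)}_r\le\nu_{r-h_t}+v_t$ is just a cleaner, more rigorous packaging of what the paper does via Proposition \ref{Ynovo} (at most $j$ columns from the vertical part) followed by the iterated Young rule for the $j$ horizontal strips.
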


\begin{proof}
By Theorem \ref{HUk} and in the spirit of Corollary \ref{MRK} the nonzero multiplicities $m_{\lambda}(UT_n(E))$ in the cocharacter sequence of $UT_n(E)$ come from the decomposition
\[
{\left(\dfrac{1}{2}+\dfrac{1}{2}\displaystyle\prod\limits_{i=1}^d\dfrac{1+t_i}{1-t_i}\right)}^{j}{(t_1+\cdots + t_d)}^{q}
={\left(\displaystyle\prod\limits_{i=1}^d\dfrac{1}{1-t_i}\right)}^j{\left(\dfrac{1}{2}\displaystyle\prod\limits_{i=1}^d(1-t_i)
+\dfrac{1}{2}\displaystyle\prod\limits_{i=1}^d(1+t_i)\right)}^{j} {S_{(1)}(\mathrm{T}_d)}^{q},
\]
$j\leq n$ and $q\leq n-1$, as linear combination of Schur functions.

By Proposition \ref{Ynovo} the Schur functions $S_{\pi}(\mathrm{T}_d)$ participating in the product
\[
{\left(\dfrac{1}{2}\displaystyle\prod\limits_{i=1}^d(1-t_i)+\dfrac{1}{2}\displaystyle\prod\limits_{i=1}^d(1+t_i)\right)}^{j}
\]
are indexed by partitions $\pi$ having at most $j\leq n$ columns. By the Branching rule,
the multiplication of $S_{\pi}(\mathrm{T}_d)$ by $S_{(1)}(T_d)$ is a linear combination of $S_{\rho}(\mathrm{T}_d)$
where the diagrams of $\rho$ are obtained from the diagrams of $\pi$ by adding a box. Multiplying $q$ times by $S_{(1)}(\mathrm{T}_d)$
we add to the diagram of $\pi$ no more than $q\leq n-1$ boxes in the first row.\\
\indent
\[
{\left(\dfrac{1}{2}\displaystyle\prod\limits_{i=1}^d(1-t_i)+\dfrac{1}{2}\displaystyle\prod\limits_{i=1}^d(1+t_i)\right)}^{j} {S_{(1)}(\mathrm{T}_d)}^{q},
\]
have at most $j+q\leq 2n-1$ boxes in the first row.\\
\indent
Due to the fact that
\[
{\left(\displaystyle\prod\limits_{i=1}^d\dfrac{1}{1-t_i}\right)}^j=\sum\limits_{n_i\geq 0}S_{(m_1)}(\mathrm{T}_d)\cdots S_{(m_j)}(\mathrm{T}_d),
\]
applying Young rule iteratively, it follows that if the partition $\lambda$ appears in the decomposition of
\[
{\left(\displaystyle\prod\limits_{i=1}^d\dfrac{1}{1-t_i}\right)}^j S_{\rho}(\mathrm{T}_d)
\]
then $\lambda$ is of the type $(m_1,\ldots,m_j,(j+q)^{s_1},\ldots, 1^{s_{j+q}})$ with $m_1,\ldots, m_j,s_1,\ldots,s_{j+q}$ nonnegative. Hence $\lambda_{j+1}\leq j+q$.\\
\indent
Therefore, if $m_{\lambda}(UT_n(E))\neq 0$, then $\lambda_{k+1}\leq 2n-1$.
\end{proof}
It is worth mentioning that results about the cocharacters sequence of $E$ and $U_2(E)$ satisfy the bound given in the previous theorem. Olsson and Regev proved in \cite{OR} that
\[
\chi_n(E)= \sum\limits_{k=0}^{n-1} \chi_{(n-k,1^k)}.
\]
Note that the diagrams of  the partition $\lambda=(n-k,1^k)$ have at most one box in the second column, that is, $\lambda_2\leq 1$. Therefore, it agrees with Theorem \ref{mRk}.\\
 In \cite{C} Centrone proved that
\[
H(UT_2(E); \mathrm{T}_d)=\sum m_{\lambda}(U_2(E))S_{\lambda}(\mathrm{T}_d),
\]
where  $\lambda=(m_1,m_2, 3,2^m, 1^l)$ or $\lambda=(m_1,m_2,2^m, 1^l)$. Hence the diagrams of  partition $\lambda$ have at most 3 boxes in the third row, that is, $\lambda_3\leq 3$.

\section{Some applications of the multiplicity series of $UT_n(E)$}
In this section, we shall compute the multiplicity series of $UT_n(E)$ in two variables for $n\in\{1,2,3\}$. As a consequence, we get the multiplicities $m_{\lambda}$
in the cocharacter sequences of $UT_n(E)$ where $1\leq n\leq 3$ and $\lambda$ is a partition in no more than 2 parts.

\begin{proposition}\label{M2EUTE}
Consider the algebras $E$ and $UT_2(E)$ then
\begin{enumerate}
\item[{\rm(i)}] the multiplicity series of $E$ in two the variables is
\begin{eqnarray}
M'(E;\mathrm{V}_2) & = & \dfrac{1+v_2}{1-v_1}.
\end{eqnarray}
\item[{\rm(ii)}] the multiplicity series of $UT_2(E)$ in two variables is
\begin{eqnarray}\label{M2UT2}
M'(UT_2(E);\mathrm{V}_2) & = & \dfrac{2(1+v_2)}{1-v_1}+ \dfrac{(1+v_2)^2(-1+v_1+2v_2-v_1v_2)}{(1-v_1)^2(1-v_2)}.
\end{eqnarray}
\end{enumerate}
\end{proposition}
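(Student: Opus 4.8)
The plan is to handle the two parts with the machinery of Sections~2 and~3, reducing everything to a few explicit two-variable computations. For part~(i) the cleanest route is to read the multiplicities straight off the Olsson--Regev description in Theorem~\ref{teo KRG}: $m_\lambda(E)=1$ exactly when $\lambda$ is a hook $(p,1^{\,n-p})$ and $0$ otherwise. Among partitions with at most two parts the only hooks are the one-row partitions $(p)$ with $p\ge 1$ and the two-row hooks $(p,1)$ with $p\ge 1$; together with the empty partition these are all the $\lambda$ contributing to the two-variable multiplicity series. Under the passage to $\mathrm V_2$ a partition $\lambda=(\lambda_1,\lambda_2)$ contributes $v_1^{\lambda_1-\lambda_2}v_2^{\lambda_2}$, so $(p)\mapsto v_1^{p}$, $(p,1)\mapsto v_1^{p-1}v_2$ and $\emptyset\mapsto 1$. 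Summing the resulting geometric series,
\[
M'(E;\mathrm V_2)=1+\sum_{p\ge1}v_1^{p}+\sum_{p\ge1}v_1^{p-1}v_2=1+\frac{v_1}{1-v_1}+\frac{v_2}{1-v_1}=\frac{1+v_2}{1-v_1},
\]
which is~(i); this can be cross-checked against Berele's Lemma applied to $H(E;\mathrm T_2)$ of Proposition~\ref{seriedeE}.

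For part~(ii) I would specialize Corollary~\ref{MRK} to $n=d=2$. Only $j\in\{1,2\}$ and $q\in\{0,1\}$ survive, and since $d_\lambda=1$ for $\lambda\vdash 0$ and for $\lambda=(1)$ the sum collapses to
\[
M'(UT_2(E);\mathrm V_2)=2\,Z(1)-Z^2(1)+Z^2(t_1),\qquad Z=Y\circ\widehat Y .
\]
By part~(i) we have $Z(1)=M'(E;\mathrm V_2)=\tfrac{1+v_2}{1-v_1}$, so the term $2Z(1)$ already produces the first summand $\tfrac{2(1+v_2)}{1-v_1}$ of the claimed formula, and it remains to evaluate $Z^2(1)$ and $Z^2(t_1)$.

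Both operators are transparent in two variables. By Corollary~\ref{Md2}, $\widehat Y$ acts on a two-variable multiplicity series as multiplication by $1+t_1t_2=1+v_2$. For $Y$ I would rewrite Proposition~\ref{Youngoperatormult} with $d=2$ in the variables $\mathrm V_2$: writing $\tilde F(v_1,v_2)$ for a multiplicity series and using $t_1=v_1$, $t_2=v_2/v_1$, $t_1t_2=v_2$, the arguments $(t_1,t_2)$ and $(t_1t_2,1)$ become $(v_1,v_2)$ and $(v_2,v_2)$, giving
\[
Y(\tilde F)=\frac{v_1\,\tilde F(v_1,v_2)-v_2\,\tilde F(v_2,v_2)}{(1-v_1)(v_1-v_2)} .
\]
Running $Z=Y\circ\widehat Y$ on $Z(1)=\tfrac{1+v_2}{1-v_1}$ (apply $\widehat Y$ to reach $\tfrac{(1+v_2)^2}{1-v_1}$, then $Y$) yields $Z^2(1)=\tfrac{(1+v_2)^2}{(1-v_1)^2(1-v_2)}$; starting instead from $t_1=v_1$ one finds $Z(t_1)=\tfrac{(1+v_2)(v_1+v_2)}{1-v_1}$ and then $Z^2(t_1)=\tfrac{(1+v_2)^2(v_1-v_1v_2+2v_2)}{(1-v_1)^2(1-v_2)}$. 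Subtracting,
\[
-Z^2(1)+Z^2(t_1)=\frac{(1+v_2)^2(-1+v_1+2v_2-v_1v_2)}{(1-v_1)^2(1-v_2)},
\]
which combined with $2Z(1)$ gives exactly~(ii).

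The one step requiring genuine care, which I expect to be the main obstacle, is the rational-function algebra of these three successive applications of $Z$: in the $\mathrm V_2$-coordinates $Y$ carries the spurious denominator $v_1-v_2$ inherited from $1/(1-t_2)$. At each stage one must confirm that the numerator $v_1\,\tilde F(v_1,v_2)-v_2\,\tilde F(v_2,v_2)$ is divisible by $v_1-v_2$ so that $Y(\tilde F)$ is an honest element of $\mathbb C[[\mathrm V_2]]$. This divisibility is in fact automatic, since at $v_1=v_2=v$ the numerator equals $v\,\tilde F(v,v)-v\,\tilde F(v,v)=0$; the real work is carrying out the division cleanly (seeing the factor $v_1-v_2$ inside $v_1^2-v_2^2$ in the $Z(t_1)$ step and inside the degree-two numerator in the $Z^2(t_1)$ step) without sign or bookkeeping slips. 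Once these cancellations are executed the rest is routine manipulation of geometric series, and the final identity can be validated through the verification criterion stated just after Berele's Lemma.
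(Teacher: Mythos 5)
Your proposal is correct and follows essentially the same route as the paper: part (i) is read off from the Olsson--Regev cocharacter of $E$ (the paper also restates it via Corollary \ref{MRK}), and part (ii) specializes Corollary \ref{MRK} to $M(UT_2(E);\mathrm{T}_2)=2Z(1)-Z^2(1)+Z^2(t_1)$ and evaluates the three terms with Proposition \ref{Youngoperatormult} and Corollary \ref{Md2}, exactly as the paper does. Your intermediate values of $Z^2(1)$ and $Z^2(t_1)$ agree with the paper's after the substitution $v_1=t_1$, $v_2=t_1t_2$, and your explicit two-variable form of $Y$ (with the removable factor $v_1-v_2$) is just the $d=2$ case of Proposition \ref{Youngoperatormult}.
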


\begin{proof} (i) This case follows immediately from the result of Olsson and Regev \cite{OR} because
\[
m_{\lambda}=\begin{cases}1 \text{ for }\lambda=(k,1^{n-k}),\\
0\text{ otherwise}
\end{cases}
\]
and
\[
M(E;\mathrm{T}_2)=\sum_{n\geq 0}t_1^n+\sum_{n\geq 2}t_1^{n-1}t_2.
\]
We shall restate this proof in terms of Corollary \ref{MRK},

By Corollary \ref{MRK}, we get
\begin{eqnarray*}
M(E;\mathrm{T_2}) & = & Z(1;\mathrm{T_2}).
\end{eqnarray*}
Since $Z=Y\circ\widehat{Y}$, applying Proposition \ref{Youngoperatormult} and Corollary \ref{Md2} we have
\[
M(E;\mathrm{T_2})  =  \dfrac{1+t_1t_2}{1-t_1}.
\]
Recall that $v_1=t_1$ and $v_2=t_1t_2$. Hence
\begin{eqnarray*}
M'(E;\mathrm{V}_2) & = & \dfrac{1+v_2}{1-v_1}.
\end{eqnarray*}
(ii) By Corollary \ref{MRK}, we have
\begin{eqnarray*}
M(UT_2(E); \mathrm{T}_2) & = & 2Z(1)-Z^2(1) + Z^2(t_1).
\end{eqnarray*}
Now, we shall compute $2Z(1),Z^2(1)$ and $Z^2(t_1)$ using Proposition \ref{Youngoperatormult} and Corollary \ref{Md2}
\begin{itemize}
\item $2Z(1)=\dfrac{2(1+t_1t_2)}{1-t_1}$;
\item $Z^2(1)=\dfrac{(1+t_1t_2)^2}{(1-t_1)^2(1-t_1t_2)}$;
\item  $Z^2(t_1)=\dfrac{(1+t_1t_2)^2(t_1+2t_1t_2-t_1^2t_2)}{(1-t_1)^2(1-t_1t_2)}$.
\end{itemize}
Hence
\begin{eqnarray*}
M(UT_2(E); \mathrm{T}_2) & = & \dfrac{2(1+t_1t_2)}{1-t_1}-\dfrac{(1+t_1t_2)^2}{(1-t_1)^2(1-t_1t_2)}+ \dfrac{(1+t_1t_2)^2(t_1+2t_1t_2-t_1^2t_2)}{(1-t_1)^2(1-t_1t_2)}\\
& = & \dfrac{2(1+t_1t_2)}{1-t_1}+ \dfrac{(1+t_1t_2)^2(-1+t_1+2t_1t_2-t_1^2t_2)}{(1-t_1)^2(1-t_1t_2)}.
\end{eqnarray*}
Finally, we have
\begin{eqnarray*}
M'(UT_2(E);\mathrm{V}_2) & = & \dfrac{2(1+v_2)}{1-v_1}+ \dfrac{(1+v_2)^2(-1+v_1+2v_2-v_1v_2)}{(1-v_1)^2(1-v_2)}
\end{eqnarray*}
and we are done.
\end{proof}
Now we are able to compute the multiplicities $m_{\lambda}$ in the cocharacter sequences of $E$ and $UT_2(E)$ when $\lambda$ is a partition in no more than $2$ parts.
The goal of this result is to show how to find out the multiplicities using Corollary \ref{MRK} and Proposition \ref{M2EUTE}.

\begin{corollary}\label{mulEUT2}
Let $\lambda$ be a partition in no more than $2$ parts. Then:
\begin{enumerate}
\item[{\rm (i)}]  the multiplicity $m_{\lambda}$ in the cocharacter sequences of $E$ is given by
\[
m_{\lambda}=\begin{cases}
1\text{ if }  \lambda=(n), \\
1\text{ if } \lambda=(\lambda_1,1),\ \lambda_1\geq 1, \\
0\text{ for all other } \lambda.
\end{cases}
\]
\item[{\rm (ii)}] the multiplicity $m_{\lambda}$ in the cocharacter sequence of $UT_2(E)$ is given by
\[
m_{\lambda}= \left\{ \begin{array}{lcl}
             1 &   if &   \lambda=(n), \\
             \\ \lambda_1 &  if &\lambda=(\lambda_1,1),\ \lambda_1\geq 1, \\
              \\ 3\lambda_1-4 &   if  & \lambda=(\lambda_1,2), \lambda_1\geq 2,\\
             \\ 4(\lambda_1-\lambda_2+1) & if & \lambda=(\lambda_1,\lambda_2), \ \lambda_1\geq \lambda_2\geq 3.
             \end{array}
   \right.
\]
\end{enumerate}
\end{corollary}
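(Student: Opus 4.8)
The plan is to read off each multiplicity $m_\lambda$ directly from the rational expressions for $M'(E;\mathrm{V}_2)$ and $M'(UT_2(E);\mathrm{V}_2)$ supplied by Proposition \ref{M2EUTE}. Since for $d=2$ one has $M'(g;\mathrm{V}_2)=\sum_\lambda m_\lambda v_1^{\lambda_1-\lambda_2}v_2^{\lambda_2}$, recovering $m_{(\lambda_1,\lambda_2)}$ is exactly the problem of computing the coefficient of $v_1^{\lambda_1-\lambda_2}v_2^{\lambda_2}$ in the power series expansion of these functions. Writing $a:=\lambda_1-\lambda_2\geq 0$ and $b:=\lambda_2\geq 0$, the task becomes: expand each expression as a formal power series in $v_1,v_2$ and extract the coefficient of $v_1^a v_2^b$. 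For part (i) this is immediate, since $\frac{1+v_2}{1-v_1}=\sum_{a\geq 0}v_1^a + v_2\sum_{a\geq 0}v_1^a$, so the coefficient of $v_1^a v_2^b$ equals $1$ for $b\in\{0,1\}$ and $0$ for $b\geq 2$, which is precisely the claimed table.

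For part (ii) I would first simplify the second summand of $M'(UT_2(E);\mathrm{V}_2)$. The idea is to decompose its numerator as $-1+v_1+2v_2-v_1v_2 = -(1-v_1)+v_2(2-v_1)$; dividing by $(1-v_1)^2$ and using $2-v_1=(1-v_1)+1$ collapses the $v_1$-dependence into clean geometric series, giving $\frac{-1+v_1+2v_2-v_1v_2}{(1-v_1)^2}=-\sum_{a\geq 0}v_1^a + v_2\sum_{a\geq 0}(a+2)v_1^a$. Next I would expand the remaining $v_2$-factor, abbreviated $A(v_2):=\frac{(1+v_2)^2}{1-v_2}=1+3v_2+4\sum_{b\geq 2}v_2^b$, whose coefficients stabilize to $4$ from $v_2^2$ onward.

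Collecting the coefficient of $v_1^a$ across all three contributions (the term $\frac{2(1+v_2)}{1-v_1}$ together with the two pieces above) yields a single power series in $v_2$, namely $C_a(v_2)=2+2v_2+A(v_2)\bigl[(a+2)v_2-1\bigr]$, and $m_{(a+b,b)}$ is then the coefficient of $v_2^b$ in $C_a(v_2)$. The remaining work is a routine case analysis on $b$ using the known coefficients of $A$: for $b=0,1,2$ these take the exceptional values $1,3,4$, producing $m_{(\lambda_1)}=1$, $m_{(\lambda_1,1)}=\lambda_1$, and $m_{(\lambda_1,2)}=3\lambda_1-4$; for $b\geq 3$ the coefficient of $A$ is the stable value $4$, yielding $m_{(\lambda_1,\lambda_2)}=4(a+1)=4(\lambda_1-\lambda_2+1)$.

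I do not anticipate any deep obstacle here, as the argument is pure coefficient extraction. The one point genuinely requiring care is that the "generic" formula $4(\lambda_1-\lambda_2+1)$ holds only once $A$ has reached its stable coefficient, i.e.\ for $\lambda_2\geq 3$; consequently the rows $\lambda_2=0,1,2$ are true boundary cases that must be computed separately rather than folded into the general formula, and keeping the shift coming from the factor $(a+2)v_2$ aligned with the transient coefficients $1,3,4$ of $A$ is where an error would most easily creep in.
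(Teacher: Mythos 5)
Your proposal is correct and follows essentially the same route as the paper: both proofs extract $m_\lambda$ as the coefficient of $v_1^{\lambda_1-\lambda_2}v_2^{\lambda_2}$ in the power-series expansion of the rational expressions from Proposition \ref{M2EUTE}, arriving at the same four families of coefficients (with the boundary cases $\lambda_2=0,1,2$ treated separately from the stable case $\lambda_2\geq 3$). The only difference is organizational: you factor the numerator as $-(1-v_1)+v_2(2-v_1)$ to collapse the $v_1$-dependence before expanding in $v_2$, whereas the paper expands everything into a long list of geometric series and then collects terms; the end result is identical.
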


\begin{proof}
(i) By Proposition \ref{M2EUTE}, we get
\[
M'(E; \mathrm{V}_2)=\sum\limits_{n\geq 0}v_1^n +\sum\limits_{n\geq 0} v_1^nv_2.
\]
From the first summand of the above equality, we have that if $\lambda=(n)$ with $n\geq 0$, then $m_{\lambda}=1$.
Observe that $v_1^nv_2$ with $n\geq 0$ corresponds to the partition $\lambda=(n+1,1)$. It follows that if $\lambda=(\lambda_1, 1)$, where $\lambda_1\geq 1$, then $m_{\lambda}=1$.

(ii) By Proposition \ref{M2EUTE}, it follows that
\begin{eqnarray*}
M'(UT_2(E); \mathrm{V}_2) & = & \sum\limits_{n\geq 0}2v_1^n +\sum\limits_{n\geq 0} 2v_1^nv_2-\sum\limits_{m,n\geq 0} (n+1)v_1^n v_2^m - \sum\limits_{n\geq 0,m\geq 1} 2(n+1)v_1^n v_2^m \\
&  & -\sum\limits_{m\geq 2,n\geq 0} (n+1)v_1^n v_2^m +
 \sum\limits_{m\geq 1,n\geq 0} 2(n+1)v_1^n v_2^m \\
 & & +\sum\limits_{m\geq 2,n\geq 0} 4(n+1)v_1^n v_2^m + \sum\limits_{m\geq 3,n\geq 0} 2(n+1)v_1^n v_2^m \\
 & &  +\sum\limits_{n\geq 1} nv_1^n +  \sum\limits_{n\geq 1} 2nv_1^nv_2+  \sum\limits_{n\geq 1} nv_1^nv_2^2.\\
\end{eqnarray*}
Therefore
\begin{equation*}\label{sM2}
\begin{split}
M'(UT_2(E); \mathrm{V}_2) = & \sum\limits_{n\geq 0} v_1^n+ \sum\limits_{n\geq 0}(n+1)v_1^nv_2+   \sum\limits_{n\geq 0} (3n+2) v_1^nv_2^2 \\
 & +\sum\limits_{n\geq 0,m\geq 3} 4(n+1) v_1^nv_2^m.
\end{split}
\end{equation*}
First, consider the first summand of the above equality  and observe that $v_1^{n}$ corresponds to the partition $\lambda=(n)$. So, if $\lambda=(n)$, then $m_{\lambda}=1$.\\
\indent
Now, notice that there is a one-to-one correspondence between monomials $v_1^nv_2$ with $n\geq 0$ and partitions $\lambda=(n+1,1)$. Hence the equality gives that if $\lambda=(n,1)$ where $n\geq 1$ then $m_{\lambda}=n$.\\
\indent
Finally, we have that $v_1^nv_2^m$ corresponds to the partition $\lambda=(n+m,m)$. Then $m_{\lambda}=4(n+1)=4((n+m)-m+1)$
and it follows that if $\lambda=(\lambda_1,\lambda_2)$ with $\lambda_1\geq\lambda_2\geq 3$, then $m_{\lambda}= 4(\lambda_1-\lambda_2 +1)$.\\
\indent
The case $n\geq 0$, $m\geq 3$ is treated similarly.
\end{proof}

We highlight that Corollary $\ref{mulEUT2}$ agrees with the results presented in \cite{OR} and \cite{C} when the partitions have no more than two parts. \\
\indent
Now, we shall compute the multiplicity series of $UT_3(E)$ in two variables.

\begin{theorem}\label{MOUT3}
\begin{enumerate}
\item[{\rm (i)}] The multiplicity series of $UT_3(E)$ in two variables is
\begin{equation}
\begin{split}
M'(UT_3(E);\mathrm{V}_2)= & \dfrac{3(1+v_2)}{1-v_1}-\dfrac{3(1+v_2)^2}{(1-v_1)^2(1-v_2)}+ 3\left(\dfrac{2(1+v_2)^2}{(1-v_1)^2(1-v_2)}+ \dfrac{v_1(1+v_2)^2}{(1-v_1)^2}\right)\\
& +\dfrac{1}{(1-v_1)^3(1-v_2)^3}\left(1-2v_1+v_1^2-2v_2+2v_1v_2-4v_2^2+8v_1v_2^2\right.\\
&  -3v_1^2v_2^2+7v_2^3
 -5v_1v_2^3 + 10v_2^4-13v_1v_2^4+ 3v_1^2v_2^4 -v_2^5-v_1v_2^5\\
 &\left. -3v_2^6+3v_1v_2^6-v_1^2v_2^6\right).
\end{split}
\end{equation}
\item[{\rm (ii)}] Let $\lambda$ be a partition in not more than 2 parts. Then the multiplicities $m_{\lambda}$ in the cocharacter sequence of $UT_3(E)$ are given by
{\small
\[
m_{\lambda}= \left\{ \begin{array}{lcl}
             1 &   if &   \lambda=(n), \\
             \\ \lambda_1 &  if &\lambda=(\lambda_1,1),\ \lambda_1\geq 1, \\
             \\ \dfrac{1}{2}(\lambda_1+2)(\lambda_1-1) &   if  & \lambda=(\lambda_1,2), \lambda_1\geq 2,\\
              \\
               \dfrac{1}{2}(16-17\lambda_1+5\lambda_1^2) &   if  & \lambda=(\lambda_1,3), \lambda_1\geq 3,\\
             \\ 14-16\lambda_2+ 4\lambda_2^2 + 2(\lambda_1-\lambda_2)(2-5(\lambda_1-\lambda_2))&&\\ \\
             + 4\lambda_2(\lambda_1-\lambda_2)(-3+ \lambda_2+ (\lambda_1-\lambda_2)) & if & \lambda=(\lambda_1,\lambda_2), \ \lambda_1\geq \lambda_2\geq 4.\\
             \end{array}
                \right.
\]
}
\end{enumerate}
\end{theorem}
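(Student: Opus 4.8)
The plan is to prove part (i) by a direct evaluation of the multiplicity series formula of Corollary \ref{MRK} in the case $n=3$, $d=2$, and then to read off part (ii) from the resulting rational function by a power series expansion. First I would specialize Corollary \ref{MRK}: writing out the triple sum for $n=3$, the only partitions $\lambda\vdash q$ with $q\le 2$ that occur are $\emptyset$, $(1)$, $(2)$ and $(1^2)$, all of degree $1$, so the formula collapses to
\[
M(UT_3(E);\mathrm{T}_2) = 3Z(1) - 3Z^2(1) + 3Z^2(t_1) + Z^3(1) - 2Z^3(t_1) + Z^3(t_1^2) + Z^3(t_1t_2),
\]
where $Z=Y\circ\widehat{Y}$. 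The first three summands are exactly the quantities already computed in the proof of Proposition \ref{M2EUTE}(ii) for $UT_2(E)$, so they can be reused verbatim; the genuinely new content is the four cubic terms $Z^3(1)$, $Z^3(t_1)$, $Z^3(t_1^2)$ and $Z^3(t_1t_2)$.

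The key computational device, valid for $d=2$, is the closed recursion
\[
Z(h)(\mathrm{T}_2) = \frac{1+t_1t_2}{(1-t_1)(1-t_2)}\bigl[\,h(t_1,t_2) - t_2\,h(t_1t_2,1)\,\bigr],
\]
which I would derive by combining Proposition \ref{Youngoperatormult} (the action of $Y$ in two variables, where only $\varepsilon_2\in\{0,1\}$ occurs) with Corollary \ref{Md2} (which identifies $\widehat{Y}$ with multiplication by $1+t_1t_2$). Starting from $h=1,\,t_1,\,t_1^2,\,t_1t_2$ and applying this recursion three times produces each cubic term as an explicit rational function. At every stage the bracket $h(t_1,t_2)-t_2\,h(t_1t_2,1)$ always carries a factor $1-t_2$ that cancels the corresponding factor in the denominator, keeping the intermediate expressions inside $\mathbb{C}[[\mathrm{V}_2]]$ and expressible through $v_1=t_1$ and $v_2=t_1t_2$ alone. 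Substituting the seven pieces into the displayed expression, passing to the variables $v_1,v_2$, and placing everything over the common denominator $(1-v_1)^3(1-v_2)^3$ yields the stated formula of part (i).

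For part (ii) I would expand the rational function of (i) as a formal power series in $v_1,v_2$ using $\frac{1}{(1-v_1)^a}=\sum_n\binom{n+a-1}{a-1}v_1^n$ and the analogous expansion in $v_2$, and then extract the coefficient of $v_1^n v_2^m$, which equals $m_\lambda$ for $\lambda=(n+m,m)$. For each fixed exponent $m$ this coefficient is a polynomial in $n$ (equivalently in $\lambda_1-\lambda_2$); collecting terms according to the power of $v_2$ gives a single generic polynomial expression valid once $m=\lambda_2\ge 4$, while the low-degree terms of the degree-six numerator perturb the coefficients for $m=0,1,2,3$, producing the separate cases $\lambda=(n)$, $\lambda=(\lambda_1,1)$, $\lambda=(\lambda_1,2)$ and $\lambda=(\lambda_1,3)$.

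I expect the main obstacle to be the iterated evaluation of the recursion for the cubic terms, in particular $Z^3(t_1^2)$ and $Z^3(t_1t_2)$: each application multiplies by $1+t_1t_2$ and by $((1-t_1)(1-t_2))^{-1}$ and introduces the substitution $t_1\mapsto t_1t_2,\ t_2\mapsto 1$, so the numerators grow rapidly and one must repeatedly verify the cancellation of the $1-t_2$ factor to keep the intermediate expressions symmetric. The final simplification over the degree-six numerator, and the careful separation of the boundary cases $\lambda_2\le 3$ from the generic case in part (ii), is where the bulk of the bookkeeping lies, but it is routine once the recursion is in place.
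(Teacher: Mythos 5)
Your proposal follows essentially the same route as the paper: both reduce via Corollary \ref{MRK} to the combination $3Z(1)-3Z^2(1)+3Z^2(t_1)+Z^3(1)-2Z^3(t_1)+Z^3(t_1^2)+Z^3(t_1t_2)$, evaluate the $Z^j$ terms using Proposition \ref{Youngoperatormult} together with Corollary \ref{Md2} (your explicit two-variable recursion $Z(h)=\frac{1+t_1t_2}{(1-t_1)(1-t_2)}[h(t_1,t_2)-t_2h(t_1t_2,1)]$ is exactly what these two results combine to give, and it is correct), and then obtain part (ii) by expanding the resulting rational function in $v_1,v_2$ and extracting coefficients with the boundary cases $\lambda_2\le 3$ treated separately. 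The argument is correct and matches the paper's proof in all essentials.
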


\begin{proof} (i)
By Corollary \ref{MRK}, we have
\begin{equation*}
M(UT_3(E); \mathrm{T}_2)=\ 3Z(1)-3Z^2(1)+ 3Z^2(t_1)+ Z^3(1)-2Z^3(t_1)+ Z^3(t_1^2)+Z^3(t_1t_2).
\end{equation*}
Due to Corollary \ref{Md2} and Proposition \ref{Youngoperatormult}, we get
\begin{equation}\label{MPUT3}
\begin{split}
3Z(1)-3Z^2(1)+ 3Z^2(t_1)=& \dfrac{3(1+v_2)}{1-v_1}-\dfrac{3(1+v_2)^2}{(1-v_1)^2(1-v_2)}\\
& +3\left(\dfrac{2(1+v_2)^2}{(1-v_1)^2(1-v_2)}+ \dfrac{v_1(1+v_2)^2}{(1-v_1)^2}\right),
\end{split}
\end{equation}
\begin{equation}\label{MP2UT3}
\begin{split}
Z^3(1)-2Z^3(t_1)+ Z^3(t_1^2)+Z^3(t_1t_2)= & \dfrac{1}{(1-v_1)^3(1-v_2)^3}\left(1-2v_1+v_1^2-2v_2+2v_1v_2\right. \\
& -4v_2^2+8v_1v_2^2 -3v_1^2v_2^2+7v_2^3
 -5v_1v_2^3 + 10v_2^4\\
 &\left. -13v_1v_2^4+ 3v_1^2v_2^4 -v_2^5-v_1v_2^5
  -3v_2^6+3v_1v_2^6-v_1^2v_2^6\right).
\end{split}
\end{equation}
Now, the result follows.

(ii)
We have expanded the expression of
$M'(UT_3(E);\mathrm{V}_2)$  given in the part (i) into a power series
using the following  well known equalities:
\begin{equation*}
\dfrac{v_1^{a_1}v_2^{a_2}}{1-v_1}= \sum\limits_{n\geq a_1}v_1^nv_2^{a_2},
\end{equation*}
\begin{equation*}
\dfrac{v_1^{a_1}v_2^{a_2}}{(1-v_1)^2}= \sum\limits_{n\geq a_1}(n-a_1+1)v_1^{n}v_2^{a_2},
\end{equation*}
\begin{equation*}
\dfrac{v_1^{a_1}v_2^{a_2}}{(1-v_1)^2(1-v_2)}=\sum\limits_{n\geq a_1}\sum\limits_{m\geq a_2}(n-a_1+1)v_1^{n}v_2^{m},
\end{equation*}
\begin{equation*}
\dfrac{v_1^{a_1}v_2^{a_2}}{(1-v_1)^3(1-v_2)^3}=\sum\limits_{n_\geq {a_1}}\sum\limits_{m\geq a_2} {n-a_1+2\choose 2}{m-a_2+2\choose 2}v_1^{n}v_2^{m}.
\end{equation*}
Easy manipulations give the explicit expression for $m_{\lambda}$ where $\lambda$ is a partition in no more than two parts.
In particular, if we want to compute the multiplicity of $\lambda=(\lambda_1,1)$, we need to study  the terms of type $v_1^nv_2$ in $M'(UT_{3};\mathrm{V}_2)$. Hence we shall study the following expression
\begin{equation*}\label{M3}
\begin{split}
\dfrac{3v^2}{1-v_1}-\left(\dfrac{3}{(1-v_1)^2(1-v_2)}+\dfrac{6v_2}{(1-v_1)^2(1-v_2)}\right)+ \dfrac{6v_1v_2}{(1-v_1)^2}\\
 +\dfrac{6v_2}{(1-v_1)^2(1-v_2)}+ \dfrac{1}{(1-v_1)^3(1-v_2)^3}\left(1-2v_1+v_1^2-2v_2+2v_1v_2\right).
\end{split}
\end{equation*}
Notice that if $n\geq 2 $ and $m=1$, then $\lambda=(n+1,1)$. By the last expression, we get
\begin{equation*}
\begin{split}
m_{\lambda}= & 3-(3(n+1)+6(n+1))+6n+6(n+1)+\dfrac{6(n+1)(n+2)}{4}-\dfrac{6n(n+1)}{2}\\
& + \dfrac{6(n-1)n}{4}-\dfrac{2(n+2)(n+1)}{2}+ \dfrac{2n(n+1)}{2}\\
= & n+1.
\end{split}
\end{equation*}
 Equivalently, if ${\lambda}=(n,1)$, with $n\geq 3$, then $m_{\lambda}=n$. Observe that, if $m=0$ and $n=1$, then $\lambda=(1,1)$ and $m_{\lambda}=1$.
 Finally, if $n=1$ and $m=1$, then $\lambda=(2,1)$. By the last expression, $m_{\lambda}=2$.\\
 \indent
We conclude that, if $\lambda=(\lambda_1,1)$, with $\lambda_1\geq 1$, then $m_{\lambda}=\lambda_1$.
The other cases are treated similarly and the proof follows.
\end{proof}

\section{Double Hilbert series and hook-Schur functions}
Notice that the multiplicity series of $UT_n(E)$ given in Corollary \ref{MRK} gives us only the multiplicities $m_{\lambda}$ when $\lambda$ has no more than $d$ parts.
Our next goal is to find an algorithm that allows us to calculate the multiplicities in the cocharacter sequence of $UT_n(E)$ having more ``freedom" in the partition $\lambda$,
and to find $m_{\lambda}$ without any restriction of the height of $\lambda$. For this purpose we shall introduce some new concepts.\\

Consider the infinite dimensional Grassmann algebra $E$, as pointed out above.
Then
\[
{\mathcal B}=\{1, e_{i_1}\cdots e_{i_m}\mid i_1<\cdots<i_m,\ m=1,2,\ldots\}
\]
is a basis of $E$.
We recall the action $*$ of $S_n$ introduced by Olsson and Regev \cite{OR} and used by Berele and Regev in \cite{BR2}.
Given $1\neq a=e_{i_1}\cdots e_{i_m}\in\mathcal B$, we write $l(a)=m$. Let $(a)=(a_1,\ldots,a_n)$, where
$a_1,\ldots,a_n\in\mathcal B$, and define
\[
I=\text{\rm Odd}(a)=\{i\mid l(a_i)\equiv 1\ \text{(mod 2)} \}.
\]

\begin{remark}
Let $I\subseteq\{1,\ldots,n\}$ ($I$ is possibly empty), $\sigma\in S_n$. Choose any $(a)=(a_1,\ldots,a_n)$, $a_i\in\mathcal B$, such that
$a_1\cdots a_n\neq 1$ and $\text{\rm Odd}(a)=I$. Then in $E$
\[
a_{\sigma(1)}\cdots a_{\sigma(n)}=\pm a_1\cdots a_n
\]
Note that the sign $\pm$ depends on $I$ and $\sigma$ but does not depend on the concrete choice of $a$.
\end{remark}

\begin{definition}
Let $I\subseteq\{1,\ldots,n\}$ ($I$ is possibly empty), $\sigma\in S_n$. Choose any $n$-tuple $(a)=(a_1,\ldots,a_n)$, $a_i\in\mathcal B$, such that
$a_1\cdots a_n\neq 0$ and $\text{\rm Odd}(a)=I$.
We define $f_I(\sigma)=\pm 1$ by the equality
\[
a_{\sigma(1)}\cdots a_{\sigma(n)}=f_I(\sigma)a_1\cdots a_n.
\]
\end{definition}

\begin{definition}
Fixing two noncommuting sets of variables $X=\{x_1,\ldots,x_k\}$ and $Z=\{z_1,\ldots,z_l\}$ and a vector space $V$ with basis
$X\cup Z=\{x_1,\ldots,x_k,z_1,\ldots,z_l\}$,
the tensors $v_1\otimes\cdots\otimes v_n$, $v_i\in X\cup Z$, form a basis of $V^{\otimes n}$. Given such $(v)=v_1\otimes\cdots\otimes v_n$, we define the $Z$-indices of $(v)$
by $IZ(v)=\{i\mid v_i\in Z\}$. Let $\sigma\in S_n$ and let us define the right action $*$ of $\sigma$ by
\[
(v_1\otimes\cdots\otimes v_n)* \sigma= f_{IZ(v)} (\sigma) v_{\sigma(1)}\otimes\cdots v_{\sigma(n)}.
\]
Finally, extend the action $*$ of $\sigma$ to the whole $V^{\otimes n}$ by linearity.
As usual, we may identify the vector space $P_n$ of multilinear polynomials of degree $n$ with the group algebra $FS_n$
and define an action $*$ of $P_n$ on $V^{\otimes n}$.
\end{definition}

Let us consider now the double Hilbert series (or the double Poincar\'e series)
related to the polynomial identities of a PI-algebra $A$.
In the above setup we identify
the tensor algebra of $V$ by $T_V$ and the free algebra \[F\langle X,Z\rangle=F\langle x_1,\ldots,x_k,z_1,\ldots,z_l\rangle.\]
The latter algebra is a free superalgebra assuming, as usual, that $x_1,\ldots,x_k$
and $z_1,\ldots,z_l$ are, respectively, the even and the odd free generators.
Let
\[
\langle a;b\rangle=\langle a_1,\ldots,a_k;b_1,\ldots,b_l\rangle,
\]
where $a_1+\cdots+a_k+b_1+\cdots+b_l=n$, and let $V\langle a;b\rangle\subseteq V^{\otimes n}$ being the subspace of all
polynomials which are homogeneous in each of $x_1,\ldots,x_k,z_1,\ldots,z_l$ of degree $a_i$ in $y_i$ and $b_j$ in $z_j$.

\begin{definition}
Let $A$ be a PI-algebra and consider the following set of commutative variables  $T_k=\{t_1,\ldots , t_k\}$, $Y_l=\{y_1,\ldots, y_l\}$. The double Hilbert series
of $A$ is defined to be
\[
H(A; \mathrm{T}_{k}, \mathrm{Y}_{l})= H(A; t_1,\ldots, t_k;y_1,\ldots, y_l):=\sum_{\langle a;b\rangle}\dim_F(V\langle a;b\rangle/V\langle a;b\rangle* Q_n)t_1^{a_1}\cdots t_k^{a_k}y_1^{b_1}\cdots y_l^{b_l},
\]
where $Q_n=T(A)\cap P_n$.
\end{definition}

Note that the variables $t$'s and $y$'s count, respectively, the degrees of the $x$'s and $z$'s.\\

There is another way to define double Hilbert series which is an exact analog of the definition of Hilbert series of relatively free algebras.
We recall that if $A$ is a PI-algebra, then $A^M:=A\otimes_F E$ inherits the superalgebra structure
from the natural ${\mathbb Z}_2$-grading of $E$, i.e, $A^{(0)}=A\otimes E^{(0)}$ and $A^{(1)}=A\otimes E^{(1)}$.\\

If $T_2(A^M)\subseteq F\langle x_1,x_2,\ldots,z_1,z_2,\ldots\rangle$
is the $T_2$-ideal of the ${\mathbb Z}_2$-graded polynomial identities of $A^M$,
then the relatively free ${\mathbb Z}_2$-graded algebra
\[
F\langle x_1,\ldots,x_k,z_1,\ldots,z_l\rangle/(T_2(A^M)\cap F\langle x_1,\ldots,x_k,z_1,\ldots,z_l\rangle)
\]
is called the \textit{magnum} of $A$. For more details on the magnum of a PI-algebra see \cite{B2}. The following result is well known (see \cite{B2}) and gives that the double Hilbert series related to the PI-algebra
$A$ coincides with the Hilbert series of the magnum of $A$.

\begin{proposition}
Let $A$ be a PI-algebra. If $\langle a;b\rangle=\langle a_1,\ldots,a_k;b_1,\ldots,b_l\rangle$
be such that $a_1+\cdots+a_k+b_1+\cdots+b_l=n$, then $V\langle a;b\rangle*Q_n=V\langle a;b\rangle\cap T_2(A^M)$.
\end{proposition}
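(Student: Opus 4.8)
The statement reduces to a single comparison, so I would first strip away the bookkeeping. Fix a basis monomial $(v)=v_1\otimes\cdots\otimes v_n$ of multidegree $\langle a;b\rangle$ and set $I=IZ(v)$. Since $*$ permutes the tensor factors of $(v)$ up to the signs $f_I(\sigma)=\pm1$, the orbit $(v)*FS_n$ spans $V\langle a;b\rangle$; as $Q_n=P_n\cap T(A)$ is a left ideal of $P_n\cong FS_n$, this gives $V\langle a;b\rangle*Q_n=(v)*Q_n$. Writing $\psi(q)=(v)*q$, everything thus amounts to comparing $\psi(Q_n)$ with $V\langle a;b\rangle\cap T_2(A^M)$. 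The engine is the following evaluation, carried out on the multilinear monomial $(v_0)$ in $n$ distinct variables of the same parities as $(v)$: for $q=\sum_\sigma c_\sigma\sigma$ and per-variable simple tensors $\gamma_p\otimes\epsilon_p$ with $\epsilon_p\in\mathcal B$ of the parity of the $p$-th variable (so $\mathrm{Odd}(\epsilon)=I$), the relation $\epsilon_{\sigma(1)}\cdots\epsilon_{\sigma(n)}=f_I(\sigma)\,\epsilon_1\cdots\epsilon_n$ in $E$ together with $f_I(\sigma)^2=1$ yields
\[
(v_0)*q\,\big|_{\gamma\otimes\epsilon}=\sum_\sigma c_\sigma f_I(\sigma)\,(\gamma_{\sigma(1)}\cdots\gamma_{\sigma(n)})\otimes(\epsilon_{\sigma(1)}\cdots\epsilon_{\sigma(n)})=q(\gamma_1,\ldots,\gamma_n)\otimes(\epsilon_1\cdots\epsilon_n),
\]
where now $(\gamma_1,\ldots,\gamma_n)$ is an arbitrary tuple of $A$ because the variables of $(v_0)$ are distinct.

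From this I read off the multilinear correspondence $(v_0)*q\in T_2(A^M)\iff q\in T(A)$. Indeed, if $q\in T(A)$ then the right-hand side vanishes for all simple tensors, hence $(v_0)*q$ vanishes on a spanning set of each graded slot and, being multilinear, is a graded identity. Conversely, since the variables of $(v_0)$ are distinct one may pick the $\epsilon_p$ from pairwise disjoint families of generators of $E$, so that $\epsilon_1\cdots\epsilon_n\neq0$; then $(v_0)*q\in T_2(A^M)$ forces $q(\gamma_1,\ldots,\gamma_n)\otimes(\epsilon_1\cdots\epsilon_n)=0$ with nonzero second factor, i.e. $q\in T(A)$.

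To transfer this to the repeated monomial $(v)$ I would use characteristic zero. Let $H=S_{a_1}\times\cdots\times S_{a_k}\times S_{b_1}\times\cdots\times S_{b_l}\le S_n$ be the Young subgroup permuting the positions of each single variable; for $\tau\in H$ one has $(v)*\tau=f_I(\tau)(v)$, so $f_I|_H$ is a linear character and $\psi(e_Hq)=\psi(q)$ for $e_H=\frac1{|H|}\sum_{\tau\in H}f_I(\tau)\tau$. Now $(v)*q$ is multihomogeneous, and in characteristic zero it is a graded identity if and only if its complete graded multilinearization is; that multilinearization is a nonzero scalar multiple of $(v_0)*(e_Hq)$, so by the previous paragraph $(v)*q\in T_2(A^M)\iff e_Hq\in T(A)$. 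The inclusion $\subseteq$ follows: if $q\in Q_n$ then $q\in T(A)$, hence $e_Hq\in T(A)$ (as $T(A)$ is $S_n$-stable), hence $(v)*q\in T_2(A^M)$. The inclusion $\supseteq$ follows too: if $p=(v)*q\in T_2(A^M)$ then $e_Hq\in T(A)\cap P_n=Q_n$ and $p=\psi(q)=\psi(e_Hq)=(v)*(e_Hq)\in(v)*Q_n=V\langle a;b\rangle*Q_n$.

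The heart of the proof — and the step I expect to fight with — is the multilinearization in the last paragraph: one must verify that when each repeated variable is split and the polynomial is restituted through disjointly supported Grassmann monomials, the reordering signs reproduce exactly the factors $f_I(\tau)$, so that the multilinear component is genuinely a nonzero multiple of $(v_0)*(e_Hq)$ and not some other sign-twist. The symmetry $\tau q=f_I(\tau)q$ built into $e_Hq$ (the even variables contributing trivial sign because $E^{(0)}$ is central, the odd ones contributing the alternating sign) is precisely what makes the $H$-summation collapse to a single nonzero multiple, and the infinite dimensionality of $E$ is what provides enough disjoint generators to separate the repeated variables in the first place.
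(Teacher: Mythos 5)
The paper does not actually prove this proposition: it is stated as ``well known'' and attributed to Berele's \emph{Magnum P.I.} paper \cite{B2}, so there is no in-text argument to measure yours against. Your blind proof is, however, correct and self-contained. The reduction $V\langle a;b\rangle * Q_n=(v)*Q_n$ is legitimate because $Q_n=P_n\cap T(A)$ is a two-sided ideal of $FS_n$ (both renaming variables and permuting argument positions preserve multilinear identities); the evaluation identity sending $(v_0)*q$ at $\gamma_p\otimes\epsilon_p$ to $q(\gamma_1,\ldots,\gamma_n)\otimes(\epsilon_1\cdots\epsilon_n)$ is exactly the cancellation $f_I(\sigma)^2=1$; and the sign bookkeeping you flag as the crux does go through: since $\tau\in H$ stabilizes $I$, the cocycle relation gives $f_I(\tau\sigma)=f_I(\tau)f_I(\sigma)$, and a direct computation shows that the complete multilinearization of $(v)*q$ equals $|H|\,(v_0)*(e_Hq)$ on the nose, so your equivalence $(v)*q\in T_2(A^M)\iff e_Hq\in T(A)$ holds and both inclusions follow as you state. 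Two small points you should make explicit rather than leave implicit: first, to form $(v_0)$ you must enlarge the sets of even and odd variables beyond $X\cup Z$ whenever some $a_i$ or $b_j$ exceeds $1$ --- harmless, since $T_2(A^M)$ lives in the free superalgebra on countably many variables of each parity, but it should be said; second, in the converse direction of the multilinear step the implication $q(\gamma_1,\ldots,\gamma_n)\otimes(\epsilon_1\cdots\epsilon_n)=0\Rightarrow q(\gamma_1,\ldots,\gamma_n)=0$ rests on the second tensor factor being nonzero, which is where the infinite dimensionality of $E$ (supplying $n$ generators with pairwise disjoint supports) is genuinely used. Neither point is a gap.
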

Now, we are going to talk about hook Schur functions and its relations with the double Hilbert series of a PI-algebra. We shall begin with a definition that generalizes the concept of a semistandard tableau.

\begin{definition}
Fix integers $k,l\geq 0$ such that $k+l>0$ and $k+l$ variables $t_1,\ldots, t_k, y_1,\ldots y_l$, so that $t_1<\cdots <t_k<y_1<\cdots y_l$. Let $\lambda$ be a partition with Young diagram $D_{\lambda}$. Fill $D_{\lambda}$ with elements from $\{t_1,\ldots, t_k, y_1,\ldots y_l\}$, allowing repetitions, to get a $(k,l)$-tableau $T_{\lambda}$. Such $T_{\lambda}$ is said to be \textit{$(k,l)$-semistandard} if
\begin{itemize}
\item[a)] The ``$t$ part" (i.e., the cells filled with $t_i$'s) of $T_{\lambda}$ is a tableau. (Thus the ``$y$ part" is a skew tableau);
\item[b)]  The  $t_i$'s are nondecreasing in rows, strictly increasing in columns;
\item[c)] The $y_j$'s are nondecreasing in columns, strictly increasing in rows.
\end{itemize}
\end{definition}

\begin{definition}\label{hsfunctions}
For a $(k,l)$-semistandard tableau $T_{\lambda}$ we define $w^{T_{\lambda}}= t_1^{a_1}\cdots t_k^{a_k}y_1^{b_1}\cdots y_l^{b_l}$,
where each $a_i$ counts the number of entries of $t_i$ in $T_{\lambda}$  and each $b_j$ counts the number of entries of  $y_j$ in $T_{\lambda}$. So the {\it hook Schur function} is defined by
\[
HS_{\lambda}(\mathrm{T}_k, \mathrm{Y}_{l})=HS_{\lambda}(t_1,\ldots,t_k,y_1,\ldots, y_l)=\sum \{w^{T_{\lambda}}\mid T_{\lambda} \text{ is a $(k,l)$-semistandard}\}.
\]
\end{definition}

Let $H(k,l;n)=\{\lambda=(\lambda_1,\lambda_2,\cdots)\vdash n\mid\lambda_{k+1}\leq l\}$ and
\[
H(k,l)=\displaystyle\bigcup_{n\geq 0} H(k,l;n).
\]
Note that if $\lambda\in H(k,l)$, then the Young diagram  $D_{\lambda}$ lies in the hook of width $k$ of the {\it arm}  and width $l$ of the {\it leg}. It is not hard to see from the definition that  $HS_{\lambda}(\mathrm{T}_k, \mathrm{Y}_{l})\neq 0$ if and only if $\lambda\in H(k,l)$.\\

The following theorem of Amitsur and Regev  shows that by taking $k,l$ large enough we can capture all partitions that have  nonzero multiplicities in the cocharacter sequence of a PI-algebra.

\begin{theorem}[Amitsur and Regev \cite{AR}]\label{existegancho}
If $A$ is a PI-algebra over a field of characteristic zero, then there exist $k$ and $l$ such that the cocharacter sequence of $A$ lies in the $k$ by $l$  hook, i.e.,  if $m_{\lambda}(A)\neq 0$, then $\lambda\in H(k,l)$.
\end{theorem}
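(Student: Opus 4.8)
The plan is to combine Regev's exponential bound on codimensions with a combinatorial lower bound for the degrees $d_\lambda$ of the irreducible characters indexed by large rectangles, and then to reduce the case of an arbitrary out-of-hook partition to that of a single square-shaped one by a monotonicity (branching) argument. First I would reduce to the unital case. If $A^{\#}=A\oplus F$ is the algebra obtained by adjoining a unit, then $T(A^{\#})\subseteq T(A)$, so $P_n(A)$ is a homomorphic image of $P_n(A^{\#})$ and hence $m_\lambda(A)\le m_\lambda(A^{\#})$ for every $\lambda$. It therefore suffices to treat unital algebras, and from now on I assume that $A$ has a unit and satisfies a multilinear identity of degree $d$.

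Next I would invoke Regev's theorem that the codimensions grow exponentially, say $c_n(A)\le(d-1)^{2n}$ for all $n$. Writing $c_n(A)=\sum_{\lambda\vdash n}m_\lambda(A)d_\lambda$, where $d_\lambda$ is the number of standard Young tableaux of shape $\lambda$, every $\lambda$ with $m_\lambda(A)\neq 0$ satisfies $d_\lambda\le\alpha^n$ with $\alpha=(d-1)^2$. The combinatorial heart is then an estimate for the square partition $R_N=(N^N)\vdash N^2$: the hook length formula together with Stirling's approximation yields $\log d_{R_N}=N^2\log N+O(N^2)$, so $(d_{R_N})^{1/N^2}\to\infty$ as $N\to\infty$. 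Hence I can fix $N$ so large that $(d_{R_N})^{1/N^2}>\alpha$, i.e. $d_{R_N}>\alpha^{N^2}\ge c_{N^2}(A)$. Comparing this with $m_{R_N}(A)d_{R_N}\le c_{N^2}(A)$ forces $m_{R_N}(A)=0$. I then set $k=l=N-1$, so that $R_N=((l+1)^{k+1})$.

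Finally I would pass from the square to an arbitrary partition outside the hook. Observe that $\lambda\notin H(k,l)$ means exactly $\lambda_{k+1}\ge l+1$, which is equivalent to $R_N\subseteq\lambda$. For unital algebras the support $\{\lambda:m_\lambda(A)\neq 0\}$ is a down-set for inclusion of diagrams: if $m_\lambda(A)\neq 0$ and $\mu$ is obtained from $\lambda$ by deleting a removable corner, then substituting the unit for one variable in a highest weight vector of weight $\lambda$ inside $F_d(A)$ produces a nonzero highest weight vector of weight $\mu$, so $m_\mu(A)\neq 0$; iterating along a chain of corner deletions gives $m_\mu(A)\neq 0$ for every $\mu\subseteq\lambda$. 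Applying this with $\mu=R_N\subseteq\lambda$ and using $m_{R_N}(A)=0$, I conclude $m_\lambda(A)=0$ for every $\lambda\supseteq R_N$, that is, for every $\lambda\notin H(k,l)$, which is the assertion.

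The main obstacle is the interplay between the dimension estimate and the monotonicity reduction. The codimension bound alone does not eliminate the ``thin'' partitions lying just outside the hook, since a large square with a long first row adjoined has only sub-exponential degree growth; this is exactly why the down-set property is indispensable, and its justification rests on the explicit description of highest weight vectors in the relatively free algebra. The rectangle estimate itself is routine once set up, but forcing its exponential base above $\alpha$ is precisely the reason both $k$ and $l$ must be taken large. I note that the hook Schur function and Grassmann envelope machinery developed above offers an alternative viewpoint: since $A\otimes E$ is again a PI-algebra, one may phrase the bound in terms of the $(k,l)$-decomposition, where $HS_{\lambda}(\mathrm{T}_k,\mathrm{Y}_l)\neq 0$ precisely when $\lambda\in H(k,l)$.
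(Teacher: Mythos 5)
The paper offers no proof of this statement (it is quoted from Amitsur and Regev \cite{AR}), so your argument has to stand on its own. Its first half does: the reduction to the unital case, Regev's bound $c_n(A)\le (d-1)^{2n}$, the hook-length estimate showing $(d_{R_N})^{1/N^2}\to\infty$, and the conclusion $m_{R_N}(A)=0$ for $N$ large are all correct, and you rightly observe that this alone cannot dispose of the ``thin'' partitions just outside the hook. The proof collapses, however, exactly at the device you introduce to bridge that gap: the claim that for a unital algebra the set $\{\lambda:\ m_\lambda(A)\neq 0\}$ is closed under passing to subdiagrams. This is false. Take $A=M_2(F)$. By the Amitsur--Levitzki theorem $s_4\in T(M_2(F))$, and since the $(1^4)$-isotypic component of $P_4$ is one-dimensional, spanned by $s_4$, we get $m_{(1^4)}(M_2(F))=0$. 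On the other hand the double Capelli polynomial $\sum_{\sigma\in S_4}\mathrm{sgn}(\sigma)\,y_0x_{\sigma(1)}y_1x_{\sigma(2)}y_2x_{\sigma(3)}y_3x_{\sigma(4)}y_4$ is multilinear, is not an identity of $M_2(F)$ (a classical fact, since $\dim M_2(F)=4$), and is alternating in $x_1,\dots,x_4$; hence the $S_9$-module it generates has all its irreducible constituents indexed by partitions $\lambda\supseteq(1^4)$, and at least one of them must satisfy $m_\lambda(M_2(F))\neq 0$. So a partition containing $(1^4)$ lies in the support while $(1^4)$ does not, and iterated corner deletion cannot preserve nonvanishing of multiplicities. (The same conclusion follows from $\mathrm{exp}(M_2(F))=4$, which forces four-row partitions into the support.)

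The mechanism you offer for the monotonicity is also broken at the technical level: replacing one occurrence of $x_i$ by $1$ in a highest weight vector does not produce a highest weight vector, because this substitution operator does not commute with the raising operators; and even if the resulting polynomial of multidegree $\mu$ were a non-identity, that would only show that some partition dominating $\mu$ occurs in the cocharacter, not $\mu$ itself. This is precisely the hard point of the Amitsur--Regev theorem: one does not descend from $\lambda$ to the rectangle through the support. Instead, from $m_\lambda(A)\neq 0$ one extracts a multilinear non-identity alternating on the long columns of $\lambda$, and plays the $S_m$-module it generates on the $(k+1)(l+1)$ distinguished variables against the codimension bound in the full degree $n$ (alternatively, Berele--Regev's hook Schur function machinery, as in Theorem \ref{Berel1}, packages this). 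Your rectangle computation would be a correct ingredient of such a proof, but the inclusion-monotonicity you rely on to finish is not available, so the argument as written does not prove the theorem.
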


Let $A$ be a PI-algebra. We shall write $\chi(A)\subseteq H(k,l)$ when the nonzero multiplicities $m_{\lambda}(A)$ in the cocharacter sequence $\chi_n(A)$, $n=0,1,2\ldots , $ appear  only for those $\lambda\in H(k,l)$. By Theorems 6 and 11 of \cite{BR1} we have the following.

\begin{theorem}[Berele and Regev \cite{BR1}]\label{Berel1}
Let $A$ be a PI-algebra with cocharacter sequence
\[
\chi_n(A)=\sum\limits_{\lambda\vdash n}m_{\lambda}(A)\chi_{\lambda}, \ n\geq 0.
\]
Then there exist nonnegative integers $k$ and $l$ such that
\[
H(A;\mathrm{T}_k,\mathrm{Y}_l)=\sum\limits_{n=0}^{\infty} \sum\limits_{\lambda\in H(k,l;n)}m_{\lambda}(A)HS_{\lambda}(\mathrm{T}_{k}, \mathrm{Y}_{l}).
\]
\end{theorem}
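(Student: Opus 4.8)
The plan is to read both sides of the asserted identity as bigraded characters of a single $GL(k|l)$-module and to match the two decompositions by means of super Schur--Weyl duality. The cocharacter multiplicities $m_\lambda(A)$ live in the left $S_n$-module $P_n(A)=P_n/Q_n$ (where $Q_n=T(A)\cap P_n$), while the double Hilbert series records the sign-twisted right $\ast$-action on $V^{\otimes n}$; the whole proof is about reconciling these two sides of the same semisimple algebra $FS_n$.

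First I would invoke Theorem~\ref{existegancho} of Amitsur and Regev to fix nonnegative integers $k,l$ with $m_\lambda(A)=0$ for every $\lambda\notin H(k,l)$; these are the $k,l$ claimed in the statement. Put $V=\operatorname{span}(x_1,\dots,x_k,z_1,\dots,z_l)$ with the $x_i$ even and the $z_j$ odd, and set $\mathcal F=\bigoplus_{n\ge 0}V^{\otimes n}/(V^{\otimes n}\ast Q_n)$. Since $\ast\sigma$ merely permutes the tensor factors (up to the sign $f_{IZ(v)}(\sigma)$), it preserves each multidegree $\langle a;b\rangle$, so the quotient is multigraded and the left-hand side $H(A;\mathrm T_k,\mathrm Y_l)$ is exactly the bigraded character of $\mathcal F$, the variables $t_i,y_j$ recording the degrees in $x_i,z_j$.

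Next I would apply super Schur--Weyl duality in the form of \cite{BR1}: as a $GL(k|l)\times S_n$-bimodule one has
\[
V^{\otimes n}\;\cong\;\bigoplus_{\lambda\in H(k,l;n)}W_\lambda\otimes M_\lambda,
\]
where $M_\lambda$ is the irreducible $S_n$-module carrying the right $\ast$-action and $W_\lambda$ is the irreducible $GL(k|l)$-module whose bigraded character is the hook Schur function $HS_\lambda(\mathrm T_k,\mathrm Y_l)$; for $\lambda\notin H(k,l)$ one has $W_\lambda=0$, so only those $\lambda$ occur and the matrix blocks $B_\mu$ of $FS_n$ with $\mu\notin H(k,l)$ annihilate $V^{\otimes n}$. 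Because $FS_n$ is semisimple in characteristic zero, the left ideal $Q_n$ splits as $Q_n=\bigoplus_\lambda Q_n^{(\lambda)}$ with $Q_n^{(\lambda)}=Q_n\cap B_\lambda$, and the block-diagonal right action then gives $V^{\otimes n}\ast Q_n=\bigoplus_\lambda W_\lambda\otimes(M_\lambda\ast Q_n^{(\lambda)})$, whence
\[
V^{\otimes n}/(V^{\otimes n}\ast Q_n)\;\cong\;\bigoplus_{\lambda\in H(k,l;n)}W_\lambda\otimes\bigl(M_\lambda/(M_\lambda\ast Q_n^{(\lambda)})\bigr).
\]

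The hard part, and the step I expect to be the main obstacle, is the left/right bookkeeping: $Q_n$ is a \emph{left} ideal of $FS_n$, whereas $\ast$ is a \emph{right} action, so I must check that the multiplicity space $M_\lambda/(M_\lambda\ast Q_n^{(\lambda)})$ has dimension exactly $m_\lambda(A)$. I would argue inside the block $B_\lambda\cong\operatorname{Mat}_{d_\lambda}(F)$: the left ideal $Q_n^{(\lambda)}$ is the span of some $r_\lambda$ ``columns'', and a direct matrix computation shows that the image $M_\lambda\ast Q_n^{(\lambda)}$ of the simple right module has dimension $r_\lambda$, so the quotient has dimension $d_\lambda-r_\lambda$. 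The same splitting shows that $M_\lambda$ occurs in the left module $P_n(A)=FS_n/Q_n$ with multiplicity $d_\lambda-r_\lambda$, which is precisely $m_\lambda(A)$ by the definition of the cocharacter. Hence $V^{\otimes n}/(V^{\otimes n}\ast Q_n)\cong\bigoplus_\lambda W_\lambda^{\oplus m_\lambda(A)}$ as $GL(k|l)$-modules; taking bigraded characters and summing over $n$ yields
\[
H(A;\mathrm T_k,\mathrm Y_l)=\sum_{n=0}^{\infty}\sum_{\lambda\in H(k,l;n)}m_\lambda(A)\,HS_\lambda(\mathrm T_k,\mathrm Y_l),
\]
with all terms indexed by $\lambda\notin H(k,l)$ absent on both sides by the choice of $k,l$, as required.
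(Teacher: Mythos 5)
The paper does not prove this statement at all: it is quoted directly from Berele and Regev, with the remark ``By Theorems 6 and 11 of \cite{BR1} we have the following,'' so there is no in-paper argument to compare against. Your proof is correct and is essentially the original Berele--Regev argument: hook Schur--Weyl duality for the signed $\ast$-action, the block decomposition of the semisimple algebra $FS_n$ along its central idempotents, and the dimension count in a single block (a left ideal of $\mathrm{Mat}_{d_\lambda}(F)$ isomorphic to $r_\lambda$ copies of the simple left module is $\mathrm{Mat}_{d_\lambda}(F)e$ with $\mathrm{rank}\,e=r_\lambda$, so the simple right module times it has dimension $r_\lambda$ and the multiplicity space has dimension $d_\lambda-r_\lambda=m_\lambda(A)$), which checks out.
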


The following result is a generalization of Theorem \ref{Formanek}.

\begin{proposition}\label{HS2}
Let $A_1, A_2$ and $A$ be PI-algebras such that
$T(A)=T(A_1)T(A_2)$. Then
\[
H(A;\mathrm{T}_k,\mathrm{Y}_l)=H(A_1;\mathrm{T}_k,\mathrm{Y}_l)+ H(A_2;\mathrm{T}_k,\mathrm{Y}_l)+ (HS_{(1)}(\mathrm{T}_k,\mathrm{Y}_l)-1)H(A_1;\mathrm{T}_k,\mathrm{Y}_l)H(A_2;\mathrm{T}_k,\mathrm{Y}_l).
\]
\end{proposition}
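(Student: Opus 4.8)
The plan is to prove the identity by transporting the relation of Proposition \ref{Formanek}(ii), already established for ordinary Hilbert series, over to double Hilbert series, exploiting that both are governed by the \emph{same} multiplicities $m_\lambda$. Recall that by Berele and Drensky the ordinary Hilbert series of a PI-algebra $B$ decomposes as $H(B;\mathrm{T}_d)=\sum_\lambda m_\lambda(B)S_\lambda(\mathrm{T}_d)$ (sum over $\lambda$ with at most $d$ parts), while by Theorem \ref{Berel1} its double Hilbert series decomposes as $H(B;\mathrm{T}_k,\mathrm{Y}_l)=\sum_{\lambda\in H(k,l)}m_\lambda(B)HS_\lambda(\mathrm{T}_k,\mathrm{Y}_l)$ with the very same coefficients. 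Thus both series are images of one formal object $\mathcal H(B)=\sum_\lambda m_\lambda(B)s_\lambda$ (a formal series of Schur functions in the ring $\Lambda$ of symmetric functions) under two different specializations.

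Next I would make these two specializations precise. Let $\pi_d:\Lambda\to\mathbb C[[\mathrm{T}_d]]^{S_d}$ be the homomorphism $s_\lambda\mapsto S_\lambda(\mathrm{T}_d)$ (which kills every $s_\lambda$ with more than $d$ parts), so that $\pi_d(\mathcal H(B))=H(B;\mathrm{T}_d)$. Let $\varphi_{k,l}:\Lambda\to\mathbb C[[\mathrm{T}_k,\mathrm{Y}_l]]$ be the \emph{super-specialization} $s_\lambda\mapsto HS_\lambda(\mathrm{T}_k,\mathrm{Y}_l)$, so that $\varphi_{k,l}(\mathcal H(B))=H(B;\mathrm{T}_k,\mathrm{Y}_l)$ once $k,l$ are large enough to contain the cocharacter of $B$ (Theorem \ref{existegancho}). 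The decisive structural point is that $\varphi_{k,l}$ is a \emph{ring homomorphism}: hook Schur functions are the characters of the irreducible polynomial $GL(k|l)$-modules, and their products decompose by the Littlewood--Richardson rule, exactly as ordinary Schur functions do (Berele and Regev \cite{BR1}). Finally, since $s_{(1)}=p_1$ in degree one, we get directly from the definitions that $\pi_d(s_{(1)})=t_1+\cdots+t_d$ and $\varphi_{k,l}(s_{(1)})=HS_{(1)}(\mathrm{T}_k,\mathrm{Y}_l)$.

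With this set-up the computation is immediate. Working in a fixed homogeneous degree $n$ (so that all products below involve only finitely many terms), Proposition \ref{Formanek}(ii) says precisely that
\[
\pi_d\bigl(\mathcal H(A)\bigr)=\pi_d\Bigl(\mathcal H(A_1)+\mathcal H(A_2)+(s_{(1)}-1)\,\mathcal H(A_1)\mathcal H(A_2)\Bigr)
\]
for every $d$, where I used that $\pi_d$ is a homomorphism and $\pi_d(s_{(1)})=t_1+\cdots+t_d$. For $d\geq n$ the map $\pi_d$ is injective on the degree-$n$ component of $\Lambda$ (the $S_\lambda(\mathrm{T}_d)$, $\lambda\vdash n$, being linearly independent), so the identity
\[
\mathcal H(A)=\mathcal H(A_1)+\mathcal H(A_2)+(s_{(1)}-1)\,\mathcal H(A_1)\mathcal H(A_2)
\]
holds degreewise, hence as an identity of formal symmetric-function series. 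Applying the ring homomorphism $\varphi_{k,l}$ and using $\varphi_{k,l}(\mathcal H(B))=H(B;\mathrm{T}_k,\mathrm{Y}_l)$ together with $\varphi_{k,l}(s_{(1)})=HS_{(1)}(\mathrm{T}_k,\mathrm{Y}_l)$ yields exactly the asserted formula.

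The main obstacle is the justification that $\varphi_{k,l}$ respects multiplication, i.e.\ the Littlewood--Richardson rule for hook Schur functions; this is the genuine input from the representation theory of $GL(k|l)$ and is what makes the ordinary-to-super transfer legitimate. Everything else (the passage from ``for all $d$'' to an identity in $\Lambda$, and the bookkeeping using $s_{(1)}=p_1$) is routine. I note that the alternative, more computational route --- redoing the Formanek--Halpin argument inside the free superalgebra $\mathcal F=F\langle x_1,\ldots,x_k,z_1,\ldots,z_l\rangle$, where $H(\mathcal F;\mathrm{T}_k,\mathrm{Y}_l)=1/(1-HS_{(1)}(\mathrm{T}_k,\mathrm{Y}_l))$, and establishing the multiplicative analog of Proposition \ref{Formanek}(i) --- would instead require the product identity $T_2(A^M)=T_2(A_1^M)T_2(A_2^M)$ for graded $T$-ideals, and it is precisely this point that the transfer argument sidesteps.
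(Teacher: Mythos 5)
Your argument is correct, but it reaches the identity by a genuinely different route than the paper. The paper's proof quotes the Berele--Regev cocharacter formula for a product of $T$-ideals from \cite{BR3}, namely the character identity (\ref{br1}) expressed through outer tensor products $\widehat{\otimes}$, and then converts it term by term into hook Schur functions via Theorem \ref{Berel1} and the Littlewood--Richardson multiplication of the $HS_\lambda$. You instead take Proposition \ref{Formanek}(ii) for ordinary Hilbert series as the input, lift it to the identity $\mathcal H(A)=\mathcal H(A_1)+\mathcal H(A_2)+(s_{(1)}-1)\mathcal H(A_1)\mathcal H(A_2)$ in the ring of symmetric functions by letting the number of variables grow (using that $s_\lambda\mapsto S_\lambda(\mathrm{T}_d)$ is injective on the degree-$n$ component once $d\geq n$), and then push it down through the super-specialization $s_\lambda\mapsto HS_\lambda(\mathrm{T}_k,\mathrm{Y}_l)$. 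Both proofs ultimately consume the same two facts --- that the ordinary and double Hilbert series carry the same multiplicities $m_\lambda$ as the cocharacter sequence, and that hook Schur functions multiply by the Littlewood--Richardson rule (the point where the paper cites \cite{BR2}) --- but you replace the citation of (\ref{br1}) by the already-stated Proposition \ref{Formanek} together with a stabilization argument, in effect re-deriving (\ref{br1}) rather than quoting it. What your route buys is a clean transfer principle: any Schur-positive identity among ordinary Hilbert series valid for all $d$ passes automatically to double Hilbert series; what it requires is the explicit verification that the super-specialization is a ring homomorphism, which you correctly identify as the genuine representation-theoretic input. One caveat you share with the paper: Theorem \ref{Berel1} is stated only for sufficiently large $k,l$, so strictly speaking both arguments prove the proposition under that proviso (in fact the hook decomposition holds for every $k,l$, since the hook Schur functions indexed by partitions outside $H(k,l)$ vanish).
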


\begin{proof}
Berele and Regev proved in \cite{BR3} that if  $T(A)=T(A_1)T(A_2)$ then
\begin{equation}\label{br1}
\chi_n (A)=\chi_n(A_1)+\chi_n(A_2)+ \chi_{(1)}\widehat{\otimes}\sum\limits_{j=0}^{n-1} \chi_j(A_1)\widehat{\otimes}\chi_{n-j-1}(A_2)-\sum\limits_{j=0}^n \chi_j(A_1)\widehat{\otimes}\chi_{n-j}(A_2)
\end{equation}
where $\widehat{\otimes}$ denotes the ``outer" tensor product of characters. Recall that for irreducible characters $\widehat{\otimes}$ behaves as in the Littlewood-Richardson rule.\\
\indent
In virtue of Theorem \ref{Berel1} we have
\[
H(A;\mathrm{T}_k,\mathrm{Y}_l)=\sum\limits_{n=0}^{\infty} \sum\limits_{\lambda\in H(k,l;n)}m_{\lambda}(A) HS_{\lambda}(T_{k}, Y_{l}),
\]
\[
H(A_1;\mathrm{T}_k;\mathrm{Y}_l)=\sum\limits_{n=0}^{\infty} \sum\limits_{\alpha\in H(k,l;n)}m_{\alpha}(A_1) HS_{\alpha}(T_{k}, Y_{l}),
\]
\[
H(A_2;\mathrm{T}_k,\mathrm{Y}_l)=\sum\limits_{n=0}^{\infty} \sum\limits_{\beta\in H(k,l;n)}m_{\beta}(A_2) HS_{\beta}(T_{k}, Y_{l}).
\]

Multiplying the hook Schur functions with the Littlewood-Richardson rule (see \cite{BR2}, section 6), the equality (\ref{br1}) implies
\[
H(A;\mathrm{T}_k,\mathrm{Y}_l)=H(A_1;\mathrm{T}_k,\mathrm{Y}_l)+ H(A_2;\mathrm{T}_k,\mathrm{Y}_l)+ (HS_{(1)}(\mathrm{T}_k,\mathrm{Y}_l)-1)H(A_1;\mathrm{T}_k,\mathrm{Y}_l)H(A_2;\mathrm{T}_k,\mathrm{Y}_l),
\]
as desired.
\end{proof}

\begin{corollary}\label{prodhs}
Let $A$ and $C$ be PI-algebras such that
$T(C)=T(A)^m$. Then
\begin{equation*}
H(C;\mathrm{T}_k,\mathrm{Y}_l)=\sum_{j=1}^{m} \binom{m}{j}{H(A; \mathrm{T}_k, \mathrm{Y}_l)}^{j}{( S_{(1)}(\mathrm{T}_k; \mathrm{Y}_l)-1)}^{j-1}.
\end{equation*}
\end{corollary}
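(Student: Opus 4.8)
The plan is to prove the identity by induction on $m$, using Proposition \ref{HS2} as the single-step recursion, in exact parallel with the way Corollary \ref{hilcoro} is deduced from Proposition \ref{Formanek}. Throughout I would write $h = H(A;\mathrm{T}_k,\mathrm{Y}_l)$ and $w = HS_{(1)}(\mathrm{T}_k,\mathrm{Y}_l) - 1$; note that the hook Schur function $HS_{(1)}$ of the one-box diagram is simply $t_1 + \cdots + t_k + y_1 + \cdots + y_l$, so that $HS_{(1)} = S_{(1)}(\mathrm{T}_k;\mathrm{Y}_l)$ and $w = S_{(1)}(\mathrm{T}_k;\mathrm{Y}_l) - 1$ is precisely the factor appearing in the statement.

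For the base case $m = 1$ we have $T(C) = T(A)$, hence $C$ and $A$ generate the same variety and $H(C;\mathrm{T}_k,\mathrm{Y}_l) = h$; the right-hand side reduces to $\binom{1}{1} h^1 w^0 = h$, so the two sides agree. For the inductive step, assume the formula for $m-1$. Since a product of $T$-ideals is again a $T$-ideal, $T(A)^{m-1}$ is a $T$-ideal, and the relatively free algebra $C' = F\langle X\rangle / T(A)^{m-1}$ satisfies $T(C') = T(A)^{m-1}$. Then $T(C) = T(A)^m = T(C')\,T(A)$, so Proposition \ref{HS2} applied with $A_1 = C'$ and $A_2 = A$ gives
\[
H(C;\mathrm{T}_k,\mathrm{Y}_l) = H(C';\mathrm{T}_k,\mathrm{Y}_l) + h + w\,H(C';\mathrm{T}_k,\mathrm{Y}_l)\,h = (1 + wh)\,H(C';\mathrm{T}_k,\mathrm{Y}_l) + h.
\]

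Substituting the induction hypothesis $H(C';\mathrm{T}_k,\mathrm{Y}_l) = \sum_{j=1}^{m-1}\binom{m-1}{j} h^j w^{j-1}$ and expanding the factor $(1+wh)$, I would split the result into the standalone $h = h^1 w^0$ together with two sums: the ``$1$'' part contributes $\sum_{j=1}^{m-1}\binom{m-1}{j} h^j w^{j-1}$, while the ``$wh$'' part contributes $\sum_{j=1}^{m-1}\binom{m-1}{j} h^{j+1} w^{j}$, which I would reindex by $i = j+1$ to $\sum_{i=2}^{m}\binom{m-1}{i-1} h^{i} w^{i-1}$. Collecting the coefficient of each monomial $h^j w^{j-1}$ then reduces everything to Pascal's rule.

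The only genuine work is this final bookkeeping. For $2 \le j \le m-1$ the coefficient of $h^j w^{j-1}$ is $\binom{m-1}{j} + \binom{m-1}{j-1} = \binom{m}{j}$, while the extreme cases yield $1 + \binom{m-1}{1} = m = \binom{m}{1}$ at $j=1$ and $\binom{m-1}{m-1} = \binom{m}{m}$ at $j=m$; hence the total equals $\sum_{j=1}^m \binom{m}{j} h^j w^{j-1}$, completing the induction. I do not expect a real obstacle here: the two points needing a word of care are the identification $HS_{(1)} = S_{(1)}$, so that Proposition \ref{HS2} feeds correctly into the claimed formula, and the remark that $T(A)^{m-1}$ is realizable as the $T$-ideal of an honest PI-algebra, so that Proposition \ref{HS2} actually applies.
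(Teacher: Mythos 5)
Your proof is correct and follows exactly the route the paper intends: the corollary is stated without proof as an immediate consequence of Proposition \ref{HS2}, obtained by the induction on $m$ (with $A_1=F\langle X\rangle/T(A)^{m-1}$, $A_2=A$) and Pascal's rule that you carry out, in complete parallel with how Corollary \ref{hilcoro} follows from Proposition \ref{Formanek}. Your two points of care (that $HS_{(1)}=S_{(1)}(\mathrm{T}_k;\mathrm{Y}_l)$ and that $T(A)^{m-1}$ is realized by an honest PI-algebra) are both consistent with what the paper uses elsewhere.
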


\section{The double Hilbert series of $UT_n(E)$}

We know that the cocharacter sequence of $E$ lies in the hook $H(1,1)$.
The double Hilbert series $H(E; t_1,y_1)$ was computed in \cite{BR1}.
We present once again the computation of $H(E; t_1,y_1)$
as a direct application of the definition of hook Schur functions.\\
\indent
We shall also compute $H(E; \mathrm{T}_k,  \mathrm{T}_l)$ for $k,l$ nonnegative integers.
Moreover, we shall find an expression for the double Hilbert series of $UT_n(E)$.
Finally, using $H(UT_n(E);\mathrm{T}_k, \mathrm{Y}_l)$ we shall give a description of the  nonzero multiplicities $m_{\lambda}$ in the cocharacter sequence of $UT_n(E)$.

\begin{proposition}
Let $E$ be the infinite dimensional Grassmann algebra. Then
\[
H(E;t_1, y_1)=\frac{1+t_1y_1}{(1-t_1)(1-y_1)}.
\]
\end{proposition}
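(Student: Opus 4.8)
The plan is to combine the Berele--Regev expansion (Theorem \ref{Berel1}) with the Olsson--Regev description of the cocharacters of $E$ (Theorem \ref{teo KRG}), and then to evaluate the resulting hook Schur functions directly from their combinatorial definition. Since $E$ lies in the hook $H(1,1)$, taking $k=l=1$ in Theorem \ref{Berel1} gives
\[
H(E;t_1,y_1)=\sum_{n\ge 0}\ \sum_{\lambda\in H(1,1;n)}m_\lambda(E)\,HS_\lambda(t_1,y_1).
\]
The set $H(1,1;n)$ consists exactly of the hook partitions $(p,1^{n-p})$, $1\le p\le n$, for $n\ge 1$, together with the empty partition for $n=0$; by Theorem \ref{teo KRG} each of these has multiplicity $m_\lambda(E)=1$. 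Thus the problem reduces to computing $HS_{(p,1^{q})}(t_1,y_1)$ for an arbitrary hook and summing the contributions.

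Next I would compute this hook Schur function straight from Definition \ref{hsfunctions}, using that here we have a single even variable $t_1$ and a single odd variable $y_1$ with $t_1<y_1$. In a $(1,1)$-semistandard filling of $\lambda=(p,1^{q})$ the cells carrying $t_1$ must form a tableau whose columns strictly increase; as $t_1$ is the only available even value, no column may contain two $t_1$'s, so the $t$-part is a horizontal strip occupying the initial cells $(1,1),\ldots,(1,j)$ of the first row. Dually, the cells carrying $y_1$ must strictly increase along rows, so each row contains at most one $y_1$. The first row then has $p-j$ cells left for $y_1$, forcing $p-j\le 1$, i.e. $j\in\{p-1,p\}$. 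The two admissible fillings give weights $t_1^{p}y_1^{q}$ and $t_1^{p-1}y_1^{q+1}$, whence
\[
HS_{(p,1^{q})}(t_1,y_1)=t_1^{p}y_1^{q}+t_1^{p-1}y_1^{q+1}.
\]

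Finally I would sum over all hooks. Writing $q=n-p$ and letting $p\ge 1$, $q\ge 0$ range freely, the two families of monomials factor as geometric series:
\[
\sum_{p\ge1}\sum_{q\ge0}\bigl(t_1^{p}y_1^{q}+t_1^{p-1}y_1^{q+1}\bigr)=\frac{t_1}{(1-t_1)(1-y_1)}+\frac{y_1}{(1-t_1)(1-y_1)}=\frac{t_1+y_1}{(1-t_1)(1-y_1)}.
\]
Adding the constant term $1$ coming from the empty partition and clearing denominators via $(1-t_1)(1-y_1)+t_1+y_1=1+t_1y_1$ yields the claimed formula $H(E;t_1,y_1)=\frac{1+t_1y_1}{(1-t_1)(1-y_1)}$. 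The only genuinely delicate step is the tableau bookkeeping that pins the hook Schur function down to exactly two monomials; once that is in hand the summation is routine, and the outcome can be cross-checked against the value of $H(E;t_1,y_1)$ already recorded in \cite{BR1}.
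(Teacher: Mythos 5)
Your proposal is correct and follows essentially the same route as the paper: both invoke Theorem \ref{teo KRG} for the multiplicities, Theorem \ref{Berel1} for the hook Schur expansion, identify the exactly two $(1,1)$-semistandard fillings of each hook $(p,1^{n-p})$ with weights $t_1^p y_1^{n-p}$ and $t_1^{p-1}y_1^{n-p+1}$, and sum the resulting geometric series. Your tableau bookkeeping is in fact slightly more explicit than the paper's, but the argument is the same.
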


\begin{proof}
By Theorem \ref{teo KRG} we know that for any $n\geq 1$, if $\lambda=(p,1^n-p)$, we have $m_{\lambda}(E)=1$.  In light of Theorem \ref{Berel1}, we have to compute $HS_{\lambda}(t_1,y_1)$ to determine $H(E;t_1,y_1)$.
Note that the only $(1,1)$-semistandard tableaux of shape $\lambda$ are
\ytableausetup
{mathmode, boxsize=1em}
\[
\begin{ytableau}
t_1 & & & & & t_1\\
y_1  \\
\ \\
\ \\
\ \\
y_1
\end{ytableau} \hspace{1.5cm} \begin{ytableau}
t_1 & & & &t_1 & y_1\\
y_1  \\
\ \\
\ \\
\ \\
y_1
\end{ytableau}
\]
corresponding to the monomials $t_1^py_1^{n-p}$ and  $t_1^{p-1}y_1^{n-p+1}$, respectively. Hence
\[
H(E;t_1,y_1)= 1+ \sum\limits_{n=1}^{\infty} \sum\limits_{p=1}^{n} (t_1^py_1^{n-p}+ t_1^{p-1}y_1^{n-p+1}).
\]
Note that
\begin{eqnarray*}
1+ \sum\limits_{n=1}^{\infty} \sum\limits_{p=1}^{n} (t_1^py_1^{n-p}+ t_1^{p-1}y_1^{n-p+1}) & = & 1 + (t_1+y_1) \sum\limits_{n=1}^{\infty} \sum\limits_{p=1}^{n} t_1^{p-1}y_1^{n-p}  \\
& = & 1 +(t_1+y_1)\sum\limits_{k=0}^{\infty} \sum\limits_{n+p=k} t_1^p y_1^n\\
& =  & 1 + (t_1+y_1)\sum\limits_{p=0}^{\infty} t_1^p\sum\limits_{n=0}^{\infty} y_1^n \\
& = & 1+\dfrac{t_1+y_1}{(1-t_1)(1-y_1)}\\
& = & \dfrac{1+ t_1y_1}{(1-t_1)(1-y_1)}.
\end{eqnarray*}
It follows that
\[
H(E;t_1,y_1)=\dfrac{1+ t_1y_1}{(1-t_1)(1-y_1)}.
\]
\end{proof}

In a similar way, we can calculate $H(E;\mathrm{T}_k,\mathrm{Y}_l)$ for any $k,l\in\mathbb{N}$ using Definition \ref{hsfunctions} with Theorems \ref{Berel1} and \ref{teo KRG}. The results is the following.

\begin{proposition}\label{HSE}
Let $k,l\in\mathbb N$. Then
\[
H(E;\mathrm{T}_k,\mathrm{Y}_l)=\frac{1}{2}\left(1+\prod_{i=1}^k\prod_{j=1}^l\frac{(1+t_i)(1+y_j)}{(1-t_i)(1-y_j)}\right).
\]
\end{proposition}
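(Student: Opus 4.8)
The plan is to reduce the computation to a sum of hook Schur functions over all hook-shaped partitions and then to evaluate that sum by a telescoping argument based on the Pieri rule for hook Schur functions.

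First I would combine Theorem \ref{Berel1} with the Olsson--Regev description of the cocharacter of $E$ (Theorem \ref{teo KRG}). Since $m_\lambda(E)=1$ exactly when $\lambda$ is a hook $(p,1^{n-p})$ and $m_\lambda(E)=0$ otherwise (with $m_\emptyset(E)=1$), Theorem \ref{Berel1} gives
\[
H(E;\mathrm{T}_k,\mathrm{Y}_l)=1+\sum_{n=1}^{\infty}\sum_{p=1}^{n} HS_{(p,1^{n-p})}(\mathrm{T}_k,\mathrm{Y}_l),
\]
that is, the double Hilbert series of $E$ is precisely the sum of all hook Schur functions indexed by hooks (including the empty one). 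Here I may extend the summation to all hook partitions without changing anything, because $HS_{\lambda}(\mathrm{T}_k,\mathrm{Y}_l)=0$ whenever $\lambda\notin H(k,l)$.

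Next, set $h^*_m:=HS_{(m)}(\mathrm{T}_k,\mathrm{Y}_l)$ and $e^*_m:=HS_{(1^m)}(\mathrm{T}_k,\mathrm{Y}_l)$. By directly enumerating the $(k,l)$-semistandard fillings of a single row and of a single column, exactly as in the preceding proposition for $k=l=1$, I would obtain the generating functions
\[
\sum_{m\ge0}h^*_m z^m=\prod_{i=1}^{k}\frac{1}{1-zt_i}\prod_{j=1}^{l}(1+zy_j),\qquad \sum_{m\ge0}e^*_m z^m=\prod_{i=1}^{k}(1+zt_i)\prod_{j=1}^{l}\frac{1}{1-zy_j}.
\]
Using that hook Schur functions multiply by the Littlewood--Richardson rule (Berele--Regev \cite{BR2}, already invoked in the proof of Proposition \ref{HS2}), the same Pieri computation valid for ordinary Schur functions yields, for $p\ge1,\,q\ge1$,
\[
h^*_p e^*_q=HS_{(p+1,1^{q-1})}(\mathrm{T}_k,\mathrm{Y}_l)+HS_{(p,1^q)}(\mathrm{T}_k,\mathrm{Y}_l),
\]
together with $h^*_p e^*_0=HS_{(p)}$ for $p\ge1$. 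Summing over $p\ge1,\,q\ge0$ and reindexing the shifted family, the two kinds of hook terms telescope: writing $H^*:=\sum_{m\ge0}h^*_m$, $E^*:=\sum_{m\ge0}e^*_m$, and letting $T$ be the sum over all nonempty hooks, one finds $(H^*-1)E^*=2T-(E^*-1)$, hence $T=\tfrac12(H^*E^*-1)$ and
\[
H(E;\mathrm{T}_k,\mathrm{Y}_l)=1+T=\frac12\bigl(1+H^*E^*\bigr)=\frac12\left(1+\prod_{i=1}^{k}\frac{1+t_i}{1-t_i}\prod_{j=1}^{l}\frac{1+y_j}{1-y_j}\right),
\]
the last equality obtained by evaluating the two generating functions at $z=1$ and multiplying. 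Specializing to $l=0$ recovers Proposition \ref{seriedeE}, which is the natural consistency check.

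I expect the only genuinely non-formal step to be the Pieri relation for hook Schur functions; the tableau enumerations for the single row and single column are routine (and already modeled by the $(1,1)$ case), and the rest is bookkeeping of the telescoping sum. The most economical way to secure the Pieri step is to observe that the identity $\sum_{\text{hooks}}s_\lambda=\tfrac12\bigl(1+(\sum_m h_m)(\sum_m e_m)\bigr)$ holds universally in the completed ring of symmetric functions, and that the supersymmetric specialization sending $s_\lambda\mapsto HS_\lambda$, $h_m\mapsto h^*_m$, $e_m\mapsto e^*_m$ is a ring homomorphism (Berele--Regev \cite{BR2}); applying this homomorphism to the universal identity delivers the claimed formula immediately and avoids carrying out the telescoping by hand.
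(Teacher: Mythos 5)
Your proof is correct, but it takes a genuinely different route from the paper's. Both arguments start the same way, using Theorem \ref{Berel1} and Theorem \ref{teo KRG} to reduce the claim to evaluating $\sum_{\text{hooks}} HS_{\lambda}(\mathrm{T}_k,\mathrm{Y}_l)$. From there the paper works directly with the $(k,l)$-semistandard tableaux of hook shape: it splits them according to whether they contain a symbol from $\mathrm{T}_k$, peels off the $T$-part as an ordinary hook tableau (so that its generating sum is $H(E;\mathrm{T}_k)-1$, reusing Proposition \ref{seriedeE}), and accounts for the $Y$-entries of the arm and leg by $\prod_j(1+y_j)$ and $\prod_j(1-y_j)^{-1}$ respectively; the tableaux with no $T$-entries contribute $H(E;\mathrm{Y}_l)$ after transposition. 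You instead avoid all tableau surgery on the two-alphabet objects: you only enumerate single rows and single columns, then invoke the Pieri identity $h^*_pe^*_q=HS_{(p+1,1^{q-1})}+HS_{(p,1^q)}$ (valid because hook Schur functions multiply by the Littlewood--Richardson rule) and telescope, or equivalently push the universal identity $\sum_{\text{hooks}}s_\lambda=\tfrac12\bigl(1+(\sum_m h_m)(\sum_m e_m)\bigr)$ through the supersymmetric specialization $s_\lambda\mapsto HS_\lambda$. I checked the telescoping: $(H^*-1)E^*=2T-(E^*-1)$ is right, giving $T=\tfrac12(H^*E^*-1)$ as claimed. Your approach buys uniformity and a built-in consistency check (setting $l=0$ recovers Proposition \ref{seriedeE} rather than assuming it), and it isolates the only combinatorial input into a standard Pieri computation; the paper's approach buys a self-contained bijective picture at the level of $(k,l)$-tableaux that parallels its $(1,1)$ warm-up and does not require knowing that $\lambda\mapsto HS_\lambda$ respects the full multiplicative structure, only the statement of Theorem \ref{Berel1}. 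One point worth making explicit if you write this up: the ring homomorphism argument is applied to an infinite sum, so you should note that the specialization is continuous for the degree filtration (each graded piece of the identity is a finite statement), which is immediate but should be said.
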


\begin{proof}
By the definition of $(k,l)$-semistandard tableau, we have only two types of tableaux for $\lambda=(p, 1^{n-p})$:
\ytableausetup
{mathmode, boxsize=1em}
\[
\begin{ytableau}
T & &T &Y & & Y\\
T  \\
\ \\
T \\
Y \\
\ \\
Y
\end{ytableau} \hspace{1.5cm} \begin{ytableau}
Y & &Y & Y& & Y\\
Y  \\
\ \\
Y\ \\
\ \\
\ \\
Y
\end{ytableau}
\]
where with the symbol $T$, we mean  ``elements lying in $\mathrm{T}_k$'' and with the symbol $Y$ ``elements lying in $\mathrm{Y}_l$''. Remember that
\begin{itemize}
\item The elements in $\mathrm{T}_k$ are nondecreasing in rows and  strictly increasing in columns.
\item The elements in $\mathrm{Y}_l$ are nondecreasing in column and  strictly increasing in rows.
\end{itemize}
Hence the tableaux of the first type have $n\geq 1$ boxes and contain at least one symbol $T$. The tableaux of the second type  do not contain symbols $T$ and have $n\geq 0$ boxes.

Consider a tableau $T_{\lambda}$ of the first type. The $T$-parts of $T_{\lambda}$ forms a semistandard $T_{\mu}$ filled with elements from $\mathrm{T}_k$ where $\mu=(q,q^{m-q})$ for some $m\leq n$ and $q\leq p$.
Hence the $T$-parts of such tableaux are in one-to-one correspondence with the semistandard $\mu$-tableaux filled with elements from $T_k$ where $\mu=(q,q^{m-q})$, $m\leq n$ and $q\leq p$.
The sum on all $\mu$ of the products of the entries of $T_{\mu}$ is equal to the sum of the Schur functions $S_{\mu}(\mathrm{T}_k)$.
If the $Y$-part of the arm of the tableau $T_{\lambda}$ consists of $y_{j_1},\ldots, y_{j_r}$, then  $1\leq j_1\cdots <j_r\leq l$.
Similarly, if the $Y$-part of the leg of the tableau of $T_{\lambda}$ consists of $y_{m_1},\ldots, y_{m_s}$ then $1\leq m_1\cdots\leq m_{s}\leq l$.
Hence the sum of all monomials $w^{T_{\lambda}}$, when $T_{\lambda}$ runs on all $(k-l)$-semistandard tableaux of type $1$, is
\begin{eqnarray*}
\sum\limits_{m\geq 1}\sum\limits_{q=1}^m S_{(q, 1^{m-q})}(\mathrm{T}_k)\sum\limits_{c_i\geq 0} y_1^{c_1}\cdots y_l^{c_l}\sum\limits_{j_1<\cdots<j_s} y_{j_1}\cdots y_{j_s}
& = & \sum\limits_{m\geq 1}\sum\limits_{q=1}^m S_{(q, 1^{m-q})}(\mathrm{T}_k)\prod_{j=1}^l \dfrac{1}{1-y_j}\sum\limits_{s=0}^l e_s(\mathrm{Y}_l)\\
& = & \sum\limits_{m\geq 1}\sum\limits_{q=1}^m S_{(q, 1^{m-q})}(\mathrm{T}_k)\prod_{j=1}^l \dfrac{1+y_j}{1-y_j}
\end{eqnarray*}
where
\[
e_s(\mathrm{Y}_l)= \sum\limits_{j_1<\cdots<j_s} y_{j_1}\cdots y_{j_s}
\]
is the $s$-th elementary symmetric function. By Theorem \ref{teo KRG}
\[
\sum\limits_{m\geq 1}\sum\limits_{q=1}^m S_{(q, 1^{m-q})}=H(E;\mathrm{T}_k)-1.
\]
The explicit form of $H(E;\mathrm{T}_k)$ is well known (see \cite{D2}). In particular, we have
\[
H(E;\mathrm{T}_k)=\dfrac{1}{2}\left(1+\prod_{i=1}^k \dfrac{1+t_i}{1-t_i}\right).
\]
In this way, the sum of all monomials $w^{T_{\lambda}}$, when $T_{\lambda}$ runs all $(k,l)$-semistandard tableaux of type $1$, has the form
\[
\dfrac{1}{2}\left(-1+\prod_{i=1}^k \dfrac{1+t_i}{1-t_i}\right)\prod_{j=1}^l \dfrac{1+y_j}{1-y_j}.
\]\\

Now, we consider the $(k,l)$-semistandard tableaux $T_{(p,1^{n-p})}$ of the second type. Clearly, the transposed tableau $T'_{(p,1^{n-p})}$ is the tableau of shape $(n-p+1, 1^{p-1})$.
So the entries of $T'_{(p,1^{n-p})}$  do not decrease in the first row and strictly increase in the columns. Hence $T'_{(p,1^{n-p})}$ is a semistandard tableau in the ordinary sense.
Applying the same arguments as for the $T$-part of the sum  for the tableaux of the first type, we have that the sum of the monomials $w^{T_{\lambda}}$ on all $T^{(p, 1^{n-p})}$ is equal to
\[
1+\sum_{n\geq 1}\sum_{p=1}^n S_{(n-p+1, 1^{p-1})}(Y_{l})=H(E;\mathrm{Y}_l)= \dfrac{1}{2}\left(1+ \prod_{j=1}^l \dfrac{1+y_j}{1-y_j}\right).
\]
Hence
\begin{eqnarray*}
H(E; \mathrm{T}_k,\mathrm{Y}_l) & = & \dfrac{1}{2}\left(-1+\prod_{i=1}^k \dfrac{1+t_i}{1-t_i}\right)\prod_{j=1}^l \dfrac{1+y_j}{1-y_j}+ \dfrac{1}{2}\left(1+ \prod_{j=1}^l \dfrac{1+y_j}{1-y_j}\right)\\
& = & \dfrac{1}{2}\left(-\prod_{j=1}^l\dfrac{1+y_j}{1-y_j} +\prod_{i=1}^k\prod_{j=1}^l \dfrac{(1+t_i)}{(1-t_i)} \dfrac{(1+y_j)}{(1-y_j)} + 1+ \prod_{j=1}^l \dfrac{1+y_j}{1-y_j}\right)\\
& = & \dfrac{1}{2}\left(1+ \prod_{i=1}^k\prod_{j=1}^l \dfrac{(1+t_i)}{(1-t_i)} \dfrac{(1+y_j)}{(1-y_j)}\right)
\end{eqnarray*}
and the proof follows.
\end{proof}

\begin{corollary}\label{HSUE}
Let $E$ be the infinite dimensional Grassmann algebra. Consider the algebra $UT_n(E)$ of $n\times n$ upper triangular matrices with entries in  $E$. Then for $k,l\in\mathbb{N}$
\[
H(UT_n(E);\mathrm{T}_k, \mathrm{Y}_l)= \sum\limits_{j=1}^n \binom{n}{j}\left( \dfrac{1}{2}\left[1+ \prod_{i=1}^k\prod_{s=1}^l \dfrac{(1+t_i)}{(1-t_i)} \dfrac{(1+y_s)}{(1-y_s)}\right]\right)^j\left(\sum\limits_{i=1}^{k}t_i+\sum\limits_{s=1}^{l}y_s-1          \right)^{j-1}.
\]
\end{corollary}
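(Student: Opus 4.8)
The plan is to obtain the formula as a direct consequence of three facts already established: Abakarov's description of the $T$-ideal of $UT_n(E)$, the product formula for double Hilbert series in Corollary \ref{prodhs}, and the explicit shape of the double Hilbert series of the Grassmann algebra in Proposition \ref{HSE}.

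First I would record that, by Theorem \ref{tidealutn}, the $T$-ideal $T(UT_n(E))$ is generated by $[x_1,x_2,x_3]\cdots[x_{3n-2},x_{3n-1},x_{3n}]$. Since $T(E)$ is generated by the single triple commutator $[x_1,x_2,x_3]$, the product of $n$ disjoint triple commutators generates exactly $T(E)^n$; hence $T(UT_n(E))=T(E)^n$. This puts us in the situation of Corollary \ref{prodhs} with $A=E$, $C=UT_n(E)$ and $m=n$, giving
\[
H(UT_n(E);\mathrm{T}_k,\mathrm{Y}_l)=\sum_{j=1}^{n}\binom{n}{j}H(E;\mathrm{T}_k,\mathrm{Y}_l)^{j}\left(S_{(1)}(\mathrm{T}_k;\mathrm{Y}_l)-1\right)^{j-1}.
\]

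Next I would substitute the two known quantities into this expression. By Proposition \ref{HSE},
\[
H(E;\mathrm{T}_k,\mathrm{Y}_l)=\frac{1}{2}\left(1+\prod_{i=1}^k\prod_{s=1}^l\frac{(1+t_i)(1+y_s)}{(1-t_i)(1-y_s)}\right),
\]
which is precisely the base appearing in the $j$-th power of the asserted identity. It then remains to identify $S_{(1)}(\mathrm{T}_k;\mathrm{Y}_l)$. The hook Schur function indexed by the one-box partition $(1)$ is the sum of $w^{T_{(1)}}$ over all $(k,l)$-semistandard fillings of a single box, and such a box may carry any one of $t_1,\ldots,t_k,y_1,\ldots,y_l$; hence $S_{(1)}(\mathrm{T}_k;\mathrm{Y}_l)=\sum_{i=1}^k t_i+\sum_{s=1}^l y_s$, so that $S_{(1)}(\mathrm{T}_k;\mathrm{Y}_l)-1$ matches the remaining factor. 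Plugging both in yields the claimed formula.

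The computation is entirely formal once the identification $T(UT_n(E))=T(E)^n$ is in place, so there is no serious obstacle; the only points requiring a line of justification are the elementary evaluation of the hook Schur function $S_{(1)}$ as the full sum of the $k+l$ variables, and the observation that the base of the $j$-th power in Corollary \ref{prodhs} coincides verbatim with the Grassmann double Hilbert series of Proposition \ref{HSE}.
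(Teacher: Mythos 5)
Your proposal is correct and follows essentially the same route as the paper: apply Corollary \ref{prodhs} with $T(UT_n(E))=T(E)^n$, substitute the formula for $H(E;\mathrm{T}_k,\mathrm{Y}_l)$ from Proposition \ref{HSE}, and evaluate $S_{(1)}(\mathrm{T}_k;\mathrm{Y}_l)$ as the sum of all $k+l$ variables. The only caveat is that your one-line justification of $T(UT_n(E))=T(E)^n$ (that a product of disjoint triple commutators automatically generates the product of the $T$-ideals) is not a proof of that equality — it is a nontrivial theorem, due to Latyshev, Popov and Abakarov, which the paper takes as known — but since the fact itself is correct and available, this does not affect the validity of the argument.
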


\begin{proof}
By  Proposition \ref{HSE} and Corollary \ref{prodhs}, we have
\[
H(UT_n(E);\mathrm{T}_k, \mathrm{Y}_l)= \sum_{j=1}^{n}\binom{n}{j}\left( \dfrac{1}{2}\left[1+ \prod_{i=1}^k\prod_{s=1}^l \dfrac{(1+t_i)}{(1-t_i)} \dfrac{(1+y_s)}{(1-y_s)}\right]\right)^j{(S_{(1)}(\mathrm{T}_k;\mathrm{Y}_l)-1)}^{j-1}.
\]
Note that by the definition of hook Schur functions, it follows that
\[
S_{(1)}(\mathrm{T}_k;\mathrm{Y}_l)= \sum\limits_{i=1}^{k}t_i+\sum\limits_{s=1}^{l}y_s.
\]

\noindent
Hence
\[
H(UT_n(E);\mathrm{T}_k, \mathrm{Y}_l)= \sum\limits_{j=1}^n \binom{n}{j}\left( \dfrac{1}{2}\left[1+ \prod_{i=1}^k\prod_{s=1}^l \dfrac{(1+t_i)}{(1-t_i)} \dfrac{(1+y_s)}{(1-y_s)}\right]\right)^j\left(\sum\limits_{i=1}^{k}t_i+\sum\limits_{s=1}^{l}y_s-1          \right)^{j-1}.
\]
\end{proof}

By Theorem \ref{existegancho}, there are natural numbers $k$ and $l$ such that $\chi(UT_n(E))\subseteq H(k,l)$. By Theorem \ref{mRk} it follows that $k=n$ and $l=2n-1$.
So, we have the following result.

\begin{proposition}\label{ganchoUE}
Let $n\geq 1$ and consider the algebra $UT_n(E)$. Then the partitions $\lambda$ with nonzero multiplicities $m_{\lambda}(UT_n(E))$ in the cocharacter sequence of $UT_n(E)$ are in the hook $H(n,2n-1)$.
\end{proposition}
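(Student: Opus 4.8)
The plan is to combine the general hook theorem of Amitsur and Regev (Theorem~\ref{existegancho}) with the explicit height bound of Theorem~\ref{mRk}: the first supplies the existence of \emph{some} bounding hook, while the second identifies its exact dimensions. Since the defining condition $\lambda\in H(k,l)$ is simply $\lambda_{k+1}\leq l$, and Theorem~\ref{mRk} already asserts $\lambda_{n+1}\leq 2n-1$ for every $\lambda$ with $m_\lambda(UT_n(E))\neq 0$, the proposition is essentially a restatement of Theorem~\ref{mRk} in the language of hooks, and the proof should be short.

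First I would invoke Theorem~\ref{existegancho}: because $UT_n(E)$ is a PI-algebra over a field of characteristic zero, there exist natural numbers $k$ and $l$ with $\chi(UT_n(E))\subseteq H(k,l)$, that is, every $\lambda$ with $m_\lambda(UT_n(E))\neq 0$ satisfies $\lambda_{k+1}\leq l$. This step merely certifies that a finite bounding hook exists and carries no real content on its own.

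Next I would pin down $k$ and $l$ by means of Theorem~\ref{mRk}. Fix any partition $\lambda$ with $m_\lambda(UT_n(E))\neq 0$ and let $p$ denote its number of nonzero parts. Choosing $d\geq\max\{p,n+1\}$, the Berele--Drensky correspondence identifies the cocharacter multiplicity $m_\lambda(UT_n(E))$ with the multiplicity of $S_\lambda(\mathrm{T}_d)$ in the decomposition of the $d$-variable Hilbert series $H(UT_n(E);\mathrm{T}_d)$; hence Theorem~\ref{mRk} applies and yields $\lambda_{n+1}\leq 2n-1$. Since $\lambda$ was arbitrary, this is precisely the defining condition $\lambda\in H(n,2n-1)$, so one may take $k=n$ and $l=2n-1$, and the cocharacter sequence of $UT_n(E)$ lies in the hook $H(n,2n-1)$.

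The single point that needs care — the main obstacle, modest as it is — is the passage from the $d$-variable statement of Theorem~\ref{mRk} to the unrestricted cocharacter sequence. Theorem~\ref{mRk} is phrased for partitions in at most $d$ parts, so I would emphasize that each $\lambda$ occurring in the cocharacter has only finitely many parts and that enlarging $d$ beyond that number recovers the genuine multiplicity via Berele--Drensky; this makes the bound $\lambda_{n+1}\leq 2n-1$ applicable to every such $\lambda$ simultaneously, completing the argument.
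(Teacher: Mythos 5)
Your proposal is correct and follows essentially the same route as the paper, which likewise derives the proposition by combining the Amitsur--Regev hook theorem (Theorem~\ref{existegancho}) with the bound $\lambda_{n+1}\leq 2n-1$ of Theorem~\ref{mRk}. Your extra care in passing from the $d$-variable statement of Theorem~\ref{mRk} to arbitrary partitions by enlarging $d$ and invoking Berele--Drensky is a point the paper leaves implicit, but it is the same argument.
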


Using the double Hilbert series of $UT_n(E)$, we are able to give a better description of the partitions $\lambda$  with $m_{\lambda}(UT_n(E))\neq 0$ than the description given in Theorem \ref{mRk}.

\begin{theorem}\label{ganchoE}
The hook Schur functions $HS_\pi(\mathrm{T}_k,\mathrm{Y}_l)$ participating in the product $H(E;\mathrm{T}_k,\mathrm{Y}_l)^j$ are indexed by partitions $\pi$ lying in $H(j,j)$.
\end{theorem}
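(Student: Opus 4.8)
The plan is to reduce the statement to a purely combinatorial fact about Littlewood--Richardson coefficients and the sizes of Durfee squares, and then to feed in one standard LR estimate.

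First I would record the hook--Schur expansion of the base factor. By Theorem \ref{Berel1} together with the Olsson--Regev cocharacter of $E$ (Theorem \ref{teo KRG}), the multiplicities $m_\lambda(E)$ equal $1$ exactly on the hooks $\lambda=(p,1^q)$ and vanish otherwise; these are precisely the partitions in $H(1,1)$. Hence $H(E;\mathrm{T}_k,\mathrm{Y}_l)=\sum_{\lambda\in H(1,1)}HS_\lambda(\mathrm{T}_k,\mathrm{Y}_l)$, the sum running over all hooks (including the empty one, which contributes the constant $1$). Raising to the $j$-th power and multiplying out, $H(E;\mathrm{T}_k,\mathrm{Y}_l)^j$ becomes a sum of products $HS_{\lambda^{(1)}}\cdots HS_{\lambda^{(j)}}$ in which every $\lambda^{(i)}$ is a hook. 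Next I would invoke that hook Schur functions multiply by the Littlewood--Richardson rule (the rule already used in Proposition \ref{HS2}, cf.\ \cite{BR2}): $HS_\lambda HS_\mu=\sum_\pi c^\pi_{\lambda\mu}HS_\pi$. Since the hook Schur functions indexed by $H(k,l)$ are linearly independent and all $c^\pi_{\lambda\mu}\geq 0$, there is no cancellation, so $HS_\pi$ participates in $H(E;\mathrm{T}_k,\mathrm{Y}_l)^j$ only if $\pi$ occurs in some product of $j$ hooks.

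It therefore suffices to prove the combinatorial claim: if $\pi$ occurs in a product of $j$ hook Schur functions, then $\pi\in H(j,j)$. Here I would reformulate membership in $H(j,j)$ through the Durfee square. Writing $d(\pi)=\max\{i:\pi_i\geq i\}$ for its side, one has $\pi\in H(j,j)\iff\pi_{j+1}\leq j\iff (j+1,j+1)\notin D_\pi\iff d(\pi)\leq j$. The heart of the argument is then a subadditivity statement for Durfee squares under LR multiplication: if $c^\pi_{\lambda\mu}\neq0$ then $d(\pi)\leq d(\lambda)+d(\mu)$. Granting this, an immediate induction on $j$ finishes the proof, since every hook $\lambda^{(i)}$ has $d(\lambda^{(i)})=1$, so a product of $j$ hooks yields $d(\pi)\leq j$, that is $\pi\in H(j,j)$. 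To establish subadditivity I would use the standard LR estimate $\pi_{a+b-1}\leq\lambda_a+\mu_b$, valid whenever $c^\pi_{\lambda\mu}\neq0$ (see \cite{M}), evaluated at the corner of the two Durfee squares: taking $a=d(\lambda)+1$ and $b=d(\mu)+1$, and using $\lambda_{d(\lambda)+1}\leq d(\lambda)$ and $\mu_{d(\mu)+1}\leq d(\mu)$, gives $\pi_{d(\lambda)+d(\mu)+1}\leq d(\lambda)+d(\mu)$, which is exactly $d(\pi)\leq d(\lambda)+d(\mu)$.

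I expect the genuine difficulty to be concentrated in the inequality $\pi_{a+b-1}\leq\lambda_a+\mu_b$, whose content is precisely the Littlewood--Richardson/lattice condition and not mere semistandardness: the simplest semistandard filling of two cells of a single row has content $(1,1)$ yet violates the bound at $a=b=1$, and is ruled out only by the lattice word condition. Translated to our situation, the main obstacle is to control the \emph{leg} of each hook---the single boxes carrying the entries $2,3,\dots$---whose placement is forced strictly downward by the lattice condition and therefore cannot enlarge the Durfee square; the \emph{arm} (the horizontal strip of $1$'s) is harmless, since adjoining a horizontal strip raises $d$ by at most one, by interlacing. If a self-contained treatment is preferred over citing \cite{M}, I would prove the inequality by analysing the chain $\lambda=\nu^{(0)}\subseteq\cdots\subseteq\nu^{(n)}=\pi$ cut out by the entries $\leq m$, in which each $\nu^{(m)}/\nu^{(m-1)}$ is a horizontal strip, and then invoking the lattice condition to bound the rows of $\pi$ lying below the Durfee corner.
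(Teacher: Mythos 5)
Your argument is correct, and it reaches the conclusion by a genuinely different route than the paper at the decisive combinatorial step. The paper also expands $H(E;\mathrm{T}_k,\mathrm{Y}_l)$ over hooks via Theorems \ref{teo KRG} and \ref{Berel1}, multiplies hook Schur functions by the Littlewood--Richardson rule, and inducts on $j$; but its inductive step is tailored to the specific factors: it takes $\lambda\in H(j-1,j-1)$ and a hook $(q,1^{m-q})$, supposes a constituent $\nu\notin H(j,j)$, observes that the $2\times 2$ square with corners $(j,j)$ and $(j+1,j+1)$ must then lie in $\nu/\lambda$, and checks by hand that no semistandard filling of that square with hook content yields a lattice word --- in effect, the fact that $c^{\nu}_{\lambda\mu}\neq 0$ with $\mu$ a hook forces $\nu/\lambda$ to contain no $2\times 2$ square. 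You instead isolate the general subadditivity $d(\pi)\le d(\lambda)+d(\mu)$ of Durfee squares under LR multiplication, deduced from the Weyl-type inequality $\pi_{a+b-1}\le\lambda_a+\mu_b$ evaluated at $a=d(\lambda)+1$, $b=d(\mu)+1$, together with the correct observation that $\pi\in H(j,j)$ is equivalent to $d(\pi)\le j$ and that each (possibly empty) hook has Durfee side at most $1$. Your key lemma is cleaner and strictly more general (it controls products of arbitrary partitions, not only hooks), and it cleanly separates the supersymmetric packaging from the ordinary LR combinatorics; the price is that the whole weight of the proof rests on the inequality $\pi_{a+b-1}\le\lambda_a+\mu_b$, which, as you rightly stress, uses the lattice condition and not mere semistandardness. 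That inequality is classical and true (it is the LR counterpart of Weyl's eigenvalue inequalities), but since it is the one nontrivial external input you should either give its short proof or a precise citation rather than the sketch in your last paragraph. With that supplied, the argument is complete.
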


\begin{proof}
We shall prove the assertion by induction on $j$. For $j=1$ the result follows from Theorems \ref{Berel1} and \ref{teo KRG}.

Assuming that the result is true for $j-1\geq 1$, we shall prove it for $j$. Note that
\[
H(E;\mathrm{T}_k,\mathrm{Y}_l)^j= H(E;\mathrm{T}_k,\mathrm{Y}_l)^{j-1}H(E;\mathrm{T}_k,\mathrm{Y}_l).
\]
By the induction hypotheses, we have
\[
H(E;\mathrm{T}_k,\mathrm{Y}_l)^{j-1}=\sum\alpha_{\lambda}HS_{\lambda}(\mathrm{T}_k,\mathrm{Y}_l),
\]
where $\lambda\in H(j-1,j-1)$ and $\alpha_{\lambda}\in\mathbb{N}_0=\mathbb{N}\cup\{0\}$.

Let $\lambda\in H(j-1,j-1)$ and consider the product
\[
HS_{\lambda}(\mathrm{T}_k,\mathrm{Y}_l)HS_{(q, 1^{m-q})} (\mathrm{T}_k,\mathrm{Y}_l)=\sum\limits \theta_{\nu} HS_{\nu}(\mathrm{T}_k,\mathrm{Y}_l).
\]
Suppose that there exists a partition $\nu$ such that $HS_{\nu}(\mathrm{T}_k,\mathrm{Y}_l)$ participates in the decomposition of the above product and $\nu\notin H(j,j)$.

Note that $\nu\notin H(j,j)$ implies $\nu_{j+1}\geq j+1$.
Let $Q\subseteq\nu/\lambda $ be the square
formed by the boxes $(j,j), (j,j+1), (j+1,j)$ and $(j+1,j+1)$ as in the picture below:
\ytableausetup
{mathmode, boxsize=1.5em}
\[
Q=\begin{ytableau}
\ & \ \\
\ & \  \\
\end{ytableau}.
\]
Of course, if we consider a semistandard tableau of shape ${\nu/\lambda}$ and content $(q, 1^{m-q})$, then $Q$ must be filled in as below
\begin{enumerate}
\item[(i)]
\ytableausetup
{mathmode, boxsize=1.5em}
\[
\begin{ytableau}
1 & 1 \\
a & b \\
\end{ytableau},
\]
where $a,b\neq 1$ and $a<b$.
\item[(ii)]
\ytableausetup
{mathmode, boxsize=1.5em}
\[
\begin{ytableau}
1 & a \\
b & c \\
\end{ytableau},
\]
where $a,b,c\neq 1$, $a<c$ and $b<c$.
\item[(iii)]
\ytableausetup
{mathmode, boxsize=1.5em}
\[
\begin{ytableau}
a & b \\
c & d \\
\end{ytableau},
\]
where $a,b,c,d\neq 1$, $a<b<d$ and $a<c<d$. Observe that those conditions are because we are working with content $(q,1^{m-q})$.
\end{enumerate}
Note that $a,b,c,d$ must be  pairwise different. Since  we consider a semistandard tableau of shape ${\nu/\lambda}$ and content $(q, 1^{m-q})$,
then none of the cases above yields lattice permutations when we read their entries from the right to the left and downwards.
Hence by the Littlewood-Richardson rule, $HS_{\nu}(\mathrm{T}_k,\mathrm{Y}_l)$ cannot participate in the decomposition of
\[
HS_{\lambda}(\mathrm{T}_k,\mathrm{Y}_l)HS_{(q, 1^{m-q})} (\mathrm{T}_k,\mathrm{Y}_l),
\]
that is an absurd and we are done.
\end{proof}

In what follows we would like to find the partitions with nonzero multiplicities participating in the decomposition of $H(UT_n(E);\mathrm{T}_k,\mathrm{Y}_l)$.

Given $n\geq 1$, let $Q_2=(n-1)^{n-1}$ be a square Young diagram with $n-1$ rows. We denote by $H(n,n)*Q_2$ the skew hook obtained
by identifying the box $(1,1)$ of $Q_2$  with the $(n+1,n+1)$-box of $H(n,n)$, the box $(1,2)$ of $Q_2$  with the $(n+1,n+2)$-box of $H(n,n)$ and so on.
Finally we identify the box $(n,n)$ of $Q_2$ with the $(2n,2n)$-box of $H(n,n)*Q_2$. In other words $\lambda\in H(n,n)*Q_2$ if $\lambda_{n+1}\leq 2n$, $\lambda_{2n+1}\leq n$
and $Q_2$ becomes the $n\times n$ square with $(n+1,n+1)$ as an upper left box.

\begin{proposition}\label{propQ2}
If $m_{\lambda}(UT_{n}(E))\neq 0$, then $\lambda\in H(n,n)*Q_2$ and  $|\lambda\cap Q_2|\leq n-1$.
\end{proposition}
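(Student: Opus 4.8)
The plan is to read the support of the multiplicities directly off the explicit double Hilbert series of Corollary \ref{HSUE} (equivalently, the factorization of Corollary \ref{prodhs}) and to feed it into the hook-width bound of Theorem \ref{ganchoE}. First I would expand $(S_{(1)}(\mathrm{T}_k;\mathrm{Y}_l)-1)^{j-1}$ by the binomial theorem and use $S_{(1)}(\mathrm{T}_k;\mathrm{Y}_l)=HS_{(1)}(\mathrm{T}_k,\mathrm{Y}_l)$ to write
\[
H(UT_n(E);\mathrm{T}_k,\mathrm{Y}_l)=\sum_{j=1}^{n}\sum_{q=0}^{j-1}(-1)^{j-1-q}\binom{n}{j}\binom{j-1}{q}H(E;\mathrm{T}_k,\mathrm{Y}_l)^{j}\,HS_{(1)}(\mathrm{T}_k,\mathrm{Y}_l)^{q}.
\]
By Theorem \ref{ganchoE} every hook Schur function occurring in $H(E;\mathrm{T}_k,\mathrm{Y}_l)^{j}$ is indexed by a partition $\pi\in H(j,j)$, and since $j\leq n$ we have $H(j,j)\subseteq H(n,n)$; in particular such a $\pi$ has no box in the corner region $C=\{(i,j):i>n,\ j>n\}$. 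Multiplication by $HS_{(1)}$ is, by the Littlewood--Richardson (Pieri) rule for hook Schur functions already invoked in Proposition \ref{HS2}, the operation of adjoining one box in all admissible ways, so $HS_{(1)}^{q}HS_{\pi}$ is a nonnegative combination of $HS_{\lambda}$ with $\lambda\supseteq\pi$ and $|\lambda\setminus\pi|=q\leq j-1\leq n-1$.

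Next I would invoke Theorem \ref{Berel1} to identify the coefficient of $HS_{\lambda}$ with $m_{\lambda}(UT_n(E))$. The conclusion of this step is the one structural fact the whole statement rests on: if $m_{\lambda}(UT_n(E))\neq 0$, then $\lambda$ occurs in at least one summand, i.e.\ $\lambda$ is obtained from some $\pi\in H(n,n)$ by adjoining at most $n-1$ boxes. The alternating signs cause no trouble, since a nonzero multiplicity forces $\lambda$ into the (unsigned) support of at least one product $H(E;\mathrm{T}_k,\mathrm{Y}_l)^{j}HS_{(1)}^{q}$.

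From here the argument becomes purely combinatorial. Because $\pi$ meets $C$ in no box, $\lambda\cap C\subseteq\lambda\setminus\pi$, so $|\lambda\cap C|\leq n-1$. Translating $\lambda\cap C$ by $(-n,-n)$ yields the Young diagram of the partition $\bar\lambda=(\max(\lambda_{n+1}-n,0),\max(\lambda_{n+2}-n,0),\ldots)$, which carries at most $n-1$ cells. A Young diagram with at most $n-1$ cells has at most $n-1$ rows and at most $n-1$ columns, hence sits inside the square $Q_2=(n-1)^{n-1}$. Translating back gives $\lambda\cap C\subseteq Q_2$, whence $\lambda\subseteq H(n,n)\cup Q_2=H(n,n)*Q_2$ and $\lambda\cap Q_2=\lambda\cap C$, so that $|\lambda\cap Q_2|\leq n-1$.

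The main obstacle, and the only place demanding care, is the second paragraph: I must be certain that multiplication by $HS_{(1)}$ really implements adding a single box (taking $k,l$ large enough that the relevant $HS_{\nu}$ do not vanish), and that across the different $j$ and $q$ the cancellations do not prevent a nonzero $m_{\lambda}$ from landing in some unsigned support. Once the indexing partitions $\pi$ are pinned to $H(n,n)$ by Theorem \ref{ganchoE} and the number of added boxes is capped by $n-1$, the geometric claim about $Q_2$ is the elementary observation just described. As a final sanity check I would confirm that the bounds $\lambda_{n+1}\leq 2n-1$ and $\lambda_{2n}\leq n$ produced by the argument are exactly the membership conditions for $H(n,n)*Q_2$ with $Q_2=(n-1)^{n-1}$.
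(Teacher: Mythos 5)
Your proposal is correct and follows essentially the same route as the paper: expand $H(UT_n(E);\mathrm{T}_k,\mathrm{Y}_l)$ into the terms $H(E;\mathrm{T}_k,\mathrm{Y}_l)^{j}HS_{(1)}^{q}$ with $j\leq n$, $q\leq n-1$, pin the partitions occurring in $H(E;\mathrm{T}_k,\mathrm{Y}_l)^{j}$ to $H(n,n)$ via Theorem \ref{ganchoE}, and track the at most $n-1$ boxes added by the Pieri-type multiplications by $HS_{(1)}$. Your closing combinatorial step (translating $\lambda\cap C$ to a Young diagram with at most $n-1$ cells, hence contained in $(n-1)^{n-1}$) is a slightly cleaner packaging of the paper's case analysis on $\lambda_{n+1}$ and $\lambda_{2n}$, but it is the same argument in substance.
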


\begin{proof}
By  Corollary \ref{HSUE}, the nonzero multiplicities $m_{\lambda}(UT_n(E))$ in the cocharacter sequence of $UT_n(E)$ come from the decomposition as an infinite sum of hook Schur functions of
\[
H(E; \mathrm{T}_k, \mathrm{Y}_l)^j\left(\sum_{i=1}^kt_i+\sum_{s=1}^ly_s\right)^{q}=H(E; \mathrm{T}_k,\mathrm{Y}_l)^j HS_{(1)}(T_k,Y_l)^{q},
\]
where $j\leq n$ and $q\leq n-1$, for some $k,l$. By Theorem \ref{ganchoE}, the hook Schur functions $HS_{\pi}(\mathrm{T}_k,\mathrm{Y}_l)$
participating in the product $H(E; \mathrm{T}_k, \mathrm{Y}_l)^j$ are indexed by partitions $\pi$ lying in $H(n,n)$, and hence $\pi_{n+1}\leq n$.
Note that by the Branching rule, the product $HS_{\pi}(\mathrm{T}_k,\mathrm{Y}_l)\cdot HS_{(1)}(T_k,Y_l)$ gives a sum of $HS_{\rho}(T,U)$
where the diagrams of $\rho$ are obtained from the diagram of $\pi$ by adding a box.
It follows that the diagram of $\rho$ has no more than one box in $Q_2$.
Multiplying $q$ times by $HS_{(1)}(\mathrm{T}_k,\mathrm{Y}_l)$ we add to the diagram of $\pi$ not more than $q\leq n-1$ boxes in $Q_2$.
This means that if $m_{\lambda}(E)\neq 0$, then $|\lambda\cap Q_2|\leq n-1$.

If there is $\lambda\notin H(n,n)*Q_2$ such that $m_{\lambda}(E)\neq 0$, then $\lambda_{n+1}\geq 2n$ or $\lambda_{2n}\geq n+1$.

Suppose $\lambda_{n+1}\geq 2n$. We know that there is $\pi\in H(n,n)$ such that the diagram $D_{\lambda}$ is obtained from the diagram $D_{\pi}$
by adding $q\leq n-1$ boxes. So $\lambda_{n+1}\leq 2n-1$, which is an absurd.

If $\lambda_{2n}\leq n+1$, we know that there is $\pi\in H(n,n)$ such that the diagram $D_{\lambda}$ is obtained from the diagram $D_{\pi}$ by adding $q\leq n-1$ boxes.
Note that the limit of cases is $\pi_i=n$ for $i\geq n+1$. Hence $\lambda_{2n}\leq n$, which is a contradiction and we are done.
\end{proof}

\begin{remark}
Note that the previous proposition gives a better description of the partitions $\lambda=(\lambda_1,\ldots,\lambda_p)$ such that $m_{\lambda}(UT_n(E))\neq 0$ than Theorem \ref{mRk}.
In fact, by Proposition \ref{propQ2}, we have that $\lambda\in H(n,2n-1)$, $\lambda_i\leq 2n-1$ for $n+1\leq i\leq 2n-1$ and $\lambda_i\leq n$ for $i\geq 2n$.
On the other hand Theorem \ref{mRk} gives only that $\lambda\in H(n,2n-1)$, that means $\lambda_i\leq 2n-1$ for $i\geq n+1$.
\end{remark}

\section{The $(k,l)$-multiplicity series of $UT_n(E)$}
\subsection{The $(k,l)$-multiplicity series}
Recall that if $A$ is a finite dimensional PI-algebra, it suffices to work with a sufficiently big set of variables $\mathrm{T}$
in order to capture all the multiplicities $m_\lambda(A)$ of its cocharacter sequence from its Hilbert series.
But if $A$ is an infinite dimensional algebra, its multiplicity series may be not enough to find all multiplicities $m_{\lambda}(A)$.
Due to this fact, we want to generalize the concept of the multiplicity series defining the $(k,l)$-multiplicity series of $A$.
This series contains the information about the multiplicities $m_{\lambda}(A)$ for $\lambda$ in the hook $H(k,l)$.


As in \cite{BR3}, identifying a partition with its Young diagram, we can break each $\lambda\in H(k,l)$ into three parts $\lambda\rightarrow(\lambda_0,\mu,\nu)$
where $\lambda_0$ is the piece of the partition in the $k\times l$ rectangle $(l^k)$; $\mu$ is a partition with at most $k$ parts and it is the part of $\lambda$ to the right of $\lambda_0$;
$\nu$ is a partition with at most $l$ parts such that its conjugate is the part of $\lambda$ below $\lambda_0$, see Figure \ref{defuv}.
\begin{figure}[!htb]
\centering
\begin{tikzpicture}
\draw (0,0.5)--(7,0.5);
\draw[dashed] (7,0.5)--(8.5,0.5);
\draw (0, 0.5)--(0,-5);
\draw [dashed] (0,-5)--(0,-6.5);
\draw (1,-5.8) node[above] {$l$};
\draw (5,0.5)--(5,-0.2);
\draw (4.0,-0.2)--(5,-0.2);
\draw (4,-0.2)--(4,-0.6);
\draw (2,-0.6)--(4,-0.6);
\draw (2,-0.6)--(2,-1);
\draw (1.5,-1)--(2,-1);
\draw (1.5,-1)--(1.5,-2.5);
\draw (1.1,-2.5)--(1.5,-2.5);
\draw (1.1, -2.5)--(1.1,-3);
\draw (0.5, -3)--(1.1,-3);
\draw (0.5,-3)--(0.5,-4);
\draw (0,-4)--(0.5,-4);

\draw (0.3,-0.5) node[right] {{$\lambda_{0}$}};
\draw [<->] (7,0.5)--(7,-1.5);
\draw (2.5,-0.3) node[right] {{$\mu$}};
\draw (7,-0.3) node[right] {{$k$}};
\draw [dashed](7,-1.5)--(8.5,-1.5);
\draw (2.0,-1.5)--(7,-1.5);
\draw [dashed][red](2,0.5)--(2,-1.5);
\draw [dashed][red](0,-1.5)--(2,-1.5);
\draw [<->](0,-5)--(2.0,-5);
\draw (2,-1.5)--(2.0,-5);
\draw[dashed](2.0,-5)--(2.0,-6.5);
\draw (0.5,-2.5) node[right] {{$\nu'$}};


\end{tikzpicture}
\caption{Definition of $\lambda_{0}, \mu$ and $\nu$}
\label{defuv}
\end{figure}
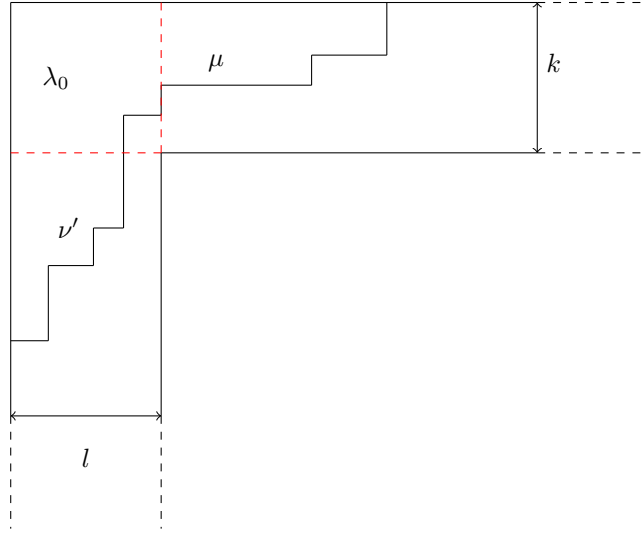
\FloatBarrier
We fix two nonnegative integers $k$ and $l$ such that $k+l\geq 1$. Let $\lambda_{0}$ be a partition such that $\lambda_{0}\subseteq (l^k)$ and let
\[
H_{\lambda_{0}}(k,l):=\{\lambda\in H(k,l)\mid \lambda\cap (l^k)=\lambda_{0}\}.
\]
Notice that $H(k,l)=\displaystyle\bigcup\limits_{\lambda_{0}\subseteq (l^k)} H_{\lambda_{0}}(k,l)$.
Let $\mathrm{T}_k=\{t_1,\ldots, t_k\}$, $Y=\{y_1,\ldots,y_l\}$ and $V=\{v_1,\ldots, v_k \}$ be three sets of commutative variables
and consider the algebra
\[
\mathbb{C}[[\mathrm{T}_k,\mathrm{Y}_l]]=\mathbb{C}[[t_1,\ldots,t_k, y_1,\ldots, y_l]]
\]
of formal power series in $(k+l)$ commutative variables. Let
\[
\Lambda^{(k,l)}:=\{\sum\limits_{\lambda} m_{\lambda}HS_{\lambda}(\mathrm{T}_k, \mathrm{Y}_{l})\mid \lambda\in H(k,l),\quad m_{\lambda}\in\mathbb{C}\}.
\]
\[
\Lambda^{(k,l,n)}:=\{\sum\limits_{\lambda} m_{\lambda}HS_{\lambda}(\mathrm{T}_k, \mathrm{Y}_{l})\mid \lambda\in H(k,l;n),\quad m_{\lambda}\in\mathbb{C}\}.
\]
Note that $\Lambda^{(k,l)}$ is a subalgebra of $\mathbb{C}[[\mathrm{T}_k,\mathrm{Y}_{l}]]$ because the hook Schur functions are multiplied with the Littlewood-Richardson rule.
The set $\{HS_{\lambda}(\mathrm{T}_k, \mathrm{Y}_{l})\mid \lambda\in H(k,l;n)\}$ is a basis of  $\Lambda^{(k,l,n)}$ as a vector space (see \cite{BR2}).

Given $g(\mathrm{T}_k,\mathrm{Y}_l)=\sum\limits_{\lambda\in H(k,l)}m_{\lambda}HS_{\lambda}(\mathrm{T}_k,\mathrm{Y}_l)\in\Lambda^{(k,l)}$, we have
\[
g(\mathrm{T}_k,\mathrm{Y}_l)=\sum\limits_{\lambda_{0}\subseteq(l^k)}\sum\limits_{\lambda\in H_{\lambda_0}(k,l)} m_{\lambda}HS_{\lambda}(\mathrm{T}_k,\mathrm{Y}_l).
\]

\begin{definition}
For
\[
g(\mathrm{T}_k,\mathrm{Y}_l)=\sum\limits_{\lambda_{0}\subseteq(l^k)}\sum\limits_{\lambda\in H_{\lambda_0}(k,l)} m_{\lambda}HS_{\lambda}(\mathrm{T}_k,\mathrm{Y}_l)\in \Lambda^{(k,l)}
\]
we define the $(k,l)$-\textit{multiplicity series} of $g$ by
\[
\widehat{M}(g;\mathrm{V}_k,\mathrm{T}_k,\mathrm{Y}_l):=\sum\limits_{\lambda_{0}\subseteq(l^k)}\sum\limits_{\lambda\in H_{\lambda_0}(k,l)}m_{\lambda}\mathrm{V}_k^{\lambda_0}\mathrm{T}_k^{\mu}\mathrm{Y}_l^{\nu},
\]
where $\mathrm{V}_k^{\lambda_0}=v_1^{{\lambda_0}_1}\cdots v_k^{{\lambda_0}_k}$, $\mathrm{T}_k^{\mu}= t_1^{\mu_1}\cdots t_k^{\mu_k}$ and $\mathrm{Y}_l^{\nu}= y_1^{\nu_1}\cdots y_l^{\nu_l}$.
\end{definition}

It is clear that $\widehat{M}(g;\mathrm{V}_k,\mathrm{T}_k,\mathrm{Y}_l)$ is an element of $\mathbb{C}[[\mathrm{V}_k,\mathrm{T}_k,\mathrm{Y}_l]]$,
the algebra of formal power series in $(2k+l)$ variables. Observe that $\widehat{M}$ defines an injective linear map from $\Lambda^{(k,l)}$ to $\mathbb{C}[[\mathrm{V}_k, \mathrm{T}_k, \mathrm{Y}_l]]$
and set $\Lambda_{(2k,l)}:=\widehat{M}(\Lambda^{(k,l)})$.

\begin{example}
Consider the hook $H(2,1)$ and the partition $\lambda=(2, 1^2)$. Then $\lambda_0=(1,1),\mu=(1)$ and $\nu=(1)$. It follows that
\[
\widehat{M}(HS_{\lambda}(\mathrm{T}_2, \mathrm{Y}_1))=v_1v_2t_1y_1.
\]
Now, if we consider the hook $H(3,1)$ and the same partition, we obtain $\lambda_0=(1^3)$, $\mu=(1)$ and $\nu=(0)$. Hence
\[
\widehat{M}(HS_{\lambda}(\mathrm{T}_3, \mathrm{Y}_1))=v_1v_2v_3t_1.
\]
\end{example}

\begin{definition}
Let $A$ be a PI-algebra.
The formal series
\[
\widehat{M}(A;\mathrm{V}_k,\mathrm{T}_k,\mathrm{Y}_l)=\sum\limits_{\lambda_0\subseteq (l^k)}\sum\limits_{\lambda\in H_{(\lambda_0)}(k,l)}m_\lambda(A) \mathrm{V}_k^{\lambda_0}\mathrm{T}_k^{\mu}\mathrm{Y}_{l}^{\nu}
\]
where $\lambda\in H(k,l)$ and $m_{\lambda}(A)$ is the multiplicity corresponding to $\chi_{\lambda}$ in the cocharacter sequences of $A$, is called the \textit{ $(k,l)$-multiplicity series of $A$}.
\end{definition}

When the sets of variables are well known, we may also write $\widehat{M}(A)$ instead of $\widehat{M}(A;\mathrm{V}_k,\mathrm{T}_k,\mathrm{Y}_l)$.\\

Our next step is to find an expression for the $(k,l)$-multiplicity series of $UT_n(E)$. At light of Proposition \ref{HSE}, we define the linear operator
\[
G:\Lambda_{(2k,l)}\rightarrow  \Lambda_{(2k,l)}
\]
such that
\[
G(\widehat{M}(g))=\widehat{M}\left(g\cdot\frac{1}{2}\left(1+\prod_{i=1}^k\prod_{s=1}^l\frac{(1+t_i)(1+y_s)}{(1-t_i)(1-y_s)}\right)\right),
\]
where $g\in\Lambda^{(k,l)}$.

\begin{remark}\label{HSn}
Notice that  $HS_{(q, 1^{m-q})}(\mathrm{T}_k,\mathrm{Y}_l)$ participates in the decomposition of
$HS_{(q)}(\mathrm{T}_k,\mathrm{Y}_l)HS_{(1)}(\mathrm{T}_k,\mathrm{Y}_l)$ and
$HS_{(q-1)}(\mathrm{T}_k,\mathrm{Y}_l)HS_{1^{(m-q+1)}}(\mathrm{T}_k,\mathrm{Y}_l)$ as a sum of hook Schur functions. Hence $HS_{(q, 1^{m-q})}$ appears with multiplicity $2$ in the product
\[
\sum\limits_{n\geq 0} HS_{(n)}(\mathrm{T}_k,\mathrm{Y}_l)\sum\limits_{m\geq 0} HS_{(1^{m})}(\mathrm{T}_k,\mathrm{Y}_l).
\]
It follows that
\[
H(E;\mathrm{T}_k,\mathrm{Y}_l)=1+\sum\limits_{m\geq 1}\sum_{q=1}^m HS_{(q,1^{m-q})}(\mathrm{T}_k,\mathrm{Y}_l)=\dfrac{1}{2}\left( 1+\sum\limits_{n\geq 0} HS_{(n)}(\mathrm{T}_k,\mathrm{Y}_l)\sum\limits_{m\geq 0} HS_{(1^{m})}(\mathrm{T}_k,\mathrm{Y}_l)\right).
\]
It is well known that
\[
\sum\limits_{n\geq 0} HS_{(n)}(\mathrm{T}_k,\mathrm{Y}_l)= \prod\limits_{i=1}^k \prod\limits_{s=1}^l\dfrac{1+y_s}{1-t_i},
\]
\[
\sum\limits_{m\geq 0} HS_{(1^m)}(\mathrm{T}_k,\mathrm{Y}_l)=\prod\limits_{i=1}^k \prod\limits_{s=1}^l\dfrac{1+t_i}{1-y_s}.
\]
\end{remark}

Due to Remark \ref{HSn}, we define the following two operators $G_1:\Lambda_{(2k,l)}\rightarrow  \Lambda_{(2k,l)}$ and $G_2:\Lambda_{(2k,l)}\rightarrow  \Lambda_{(2k,l)}$ given by
\[
G_1(\widehat{M}(g))=\widehat{M}\left(g\cdot \prod_{i=1}^k\prod_{s=1}^l\frac{1+y_s}{1-t_i}\right)
\]
and
\[
G_2(\widehat{M}(g))=\widehat{M}\left(g\cdot \prod\limits_{i=1}^k \prod\limits_{s=1}^l\dfrac{1+t_i}{1-y_s} \right),
\]
where $g\in\Lambda^{(k,l)}$.

\noindent
Note that $G_1\circ G_2=G_2\circ G_1$ and $G=\dfrac{1}{2}\left(\mathbb{1}+G_2\circ G_1\right)$ where $\mathbb{1}$ is the identity map.

\subsection{The action of $G_1$ and $G_2$}
Now, we are going to describe the action of $G_1$ and $G_2$ on $\Lambda_{2k,l}$. Let us start with the operator $G_1$.
Using the notation of Section $3$, we define the following linear operator.
\begin{definition}
Given a positive integer $d$. Let $f(\mathrm{T}_d)\in \mathbb{C}[[\mathrm{T}_d]]^{S_d}$, define the conjugate Young operator $\overline{Y}$ on  $\mathbb{C}[[\mathrm{V}_d]]$ as 
$$\overline{Y}(M(f(\mathrm{T}_d)):=M\left( f(\mathrm{T}_d)\cdot \sum\limits_ {s=0}^d S_{(1^s)}(\mathrm{T}_d)\right).$$
\end{definition}
\begin{lemma}\label{iyoung1}
Consider the hook $H(k,l)$ and let $\lambda\in H(k,l)$ be a partition such that $\lambda_0=(l^k)$. Then
{\small
\begin{eqnarray}\label{igualdade young}
\widehat{M}\left(HS_{\lambda}(\mathrm{T}_k,\mathrm{Y}_l)HS_{(n)}(\mathrm{T}_k,\mathrm{Y}_l)\right)=\mathrm{V}_k^{\lambda_0}\sum\limits_{m=0}^n M(S_{\mu}(\mathrm{T}_k)S_{(m)}(\mathrm{T}_k)) M(S_{\nu}(\mathrm{Y}_l)S_{(1^{n-m})}(\mathrm{Y}_l))
\end{eqnarray}}
\end{lemma}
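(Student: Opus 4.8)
The plan is to reduce the computation to the Pieri rule for hook Schur functions and then to disentangle the resulting horizontal strip into an ``arm part'' and a ``leg part''. Since hook Schur functions multiply by the Littlewood--Richardson rule and $HS_{(n)}$ is indexed by the single row $(n)$, the relevant Littlewood--Richardson coefficient $c^{\rho}_{\lambda,(n)}$ equals $1$ when $\rho/\lambda$ is a horizontal strip and $0$ otherwise, so the product expands as
\[
HS_\lambda(\mathrm{T}_k,\mathrm{Y}_l)\,HS_{(n)}(\mathrm{T}_k,\mathrm{Y}_l)=\sum_{\rho}HS_\rho(\mathrm{T}_k,\mathrm{Y}_l),
\]
where $\rho$ runs over all partitions such that $\rho/\lambda$ is a horizontal $n$-strip (at most one new box in each column), each with multiplicity one, and where we retain only those $\rho\in H(k,l)$ since $HS_\rho(\mathrm{T}_k,\mathrm{Y}_l)=0$ otherwise. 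First I would record that, because $\lambda_0=(l^k)$ is the full $k\times l$ rectangle, every surviving $\rho$ also satisfies $\rho_0=\rho\cap(l^k)=(l^k)$; hence $\mathrm{V}_k^{\rho_0}=\mathrm{V}_k^{\lambda_0}$ for all $\rho$ appearing, which produces the common factor $\mathrm{V}_k^{\lambda_0}$ on the right-hand side.

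Next I would analyze where the $n$ new boxes can sit. No box can lie inside the rectangle (it is already full), and for a surviving $\rho\in H(k,l)$ no box can lie in a row $>k$ and a column $>l$. Thus each new box lies either in the \emph{arm} (rows $1,\dots,k$, columns $>l$) or in the \emph{leg} (rows $>k$, columns $\le l$), and these two regions occupy disjoint sets of columns. Consequently the horizontal-strip condition decouples: the arm boxes form a horizontal strip, the leg boxes form a horizontal strip, and there is no interaction between them. Moreover the only partition inequality linking the two regions, $\rho_k\ge\rho_{k+1}$, holds automatically because $\rho_k\ge l\ge\rho_{k+1}$. Writing $\mu,\nu$ for the arm and leg data of $\lambda$ as in the $(k,l)$-decomposition, if $m$ boxes are added in the arm then $\mu^{(\rho)}_i=\rho_i-l$ defines a partition with $\mu^{(\rho)}/\mu$ a horizontal $m$-strip, while the remaining $n-m$ boxes form a horizontal strip added to the leg $\nu'=(\lambda_{k+1},\lambda_{k+2},\dots)$.

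Since conjugation sends horizontal strips to vertical strips, the leg contribution says precisely that $\nu^{(\rho)}/\nu$ is a vertical $(n-m)$-strip. Summing over $m=0,\dots,n$ and applying $\widehat{M}$, which sends $HS_\rho$ to $\mathrm{V}_k^{\rho_0}\mathrm{T}_k^{\mu^{(\rho)}}\mathrm{Y}_l^{\nu^{(\rho)}}$, I would obtain
\[
\widehat{M}\!\left(HS_\lambda(\mathrm{T}_k,\mathrm{Y}_l)HS_{(n)}(\mathrm{T}_k,\mathrm{Y}_l)\right)=\mathrm{V}_k^{\lambda_0}\sum_{m=0}^{n}\Bigl(\sum_{\mu^{(\rho)}}\mathrm{T}_k^{\mu^{(\rho)}}\Bigr)\Bigl(\sum_{\nu^{(\rho)}}\mathrm{Y}_l^{\nu^{(\rho)}}\Bigr),
\]
where the inner sums range over horizontal $m$-strips $\mu^{(\rho)}/\mu$ and vertical $(n-m)$-strips $\nu^{(\rho)}/\nu$. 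By the Young rule (Case 1 and Case 2) together with the definition of the multiplicity series, these two inner sums are exactly $M(S_\mu(\mathrm{T}_k)S_{(m)}(\mathrm{T}_k))$ and $M(S_\nu(\mathrm{Y}_l)S_{(1^{n-m})}(\mathrm{Y}_l))$, which is the desired identity.

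The step I expect to be the main obstacle is the rigorous justification of the decoupling in the second paragraph: one must verify that the arm and leg strips may be chosen independently, so that every pair consisting of a horizontal $m$-strip on $\mu$ and a vertical $(n-m)$-strip on $\nu$ arises from a unique admissible $\rho\in H(k,l)$ and conversely, and that no extra constraint is forced at the corner where row $k$ meets row $k+1$. This is precisely where the hypothesis $\lambda_0=(l^k)$ is essential, since it makes the boundary inequality $\rho_k\ge\rho_{k+1}$ vacuous and guarantees that the arm and the leg never share a column.
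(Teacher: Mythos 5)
Your proposal is correct and follows essentially the same route as the paper's proof: both reduce to the Pieri/Young rule for $HS_{\lambda}\cdot HS_{(n)}$, observe that $\lambda_0=(l^k)$ forces every surviving $\rho$ to satisfy $\rho\cap(l^k)=(l^k)$, and then decouple the added horizontal strip into an arm part (horizontal strip on $\mu$) and a leg part (vertical strip on $\nu$ after conjugation). The paper merely asserts this decoupling in one line, whereas you justify it explicitly via the disjointness of the arm and leg columns and the vacuous corner inequality; this is a welcome elaboration, not a different argument.
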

\begin{proof}
Recall that $HS_{\beta}(\mathrm{T}_k,\mathrm{Y}_l)=0$ if, and only if, $\beta\notin H(k,l)$.
Since $\lambda_0=(l^k)$, if $HS_{\beta}(\mathrm{T}_k,\mathrm{Y}_l)$ participates in the decomposition of $$HS_{\lambda}(\mathrm{T}_k,\mathrm{Y}_l)HS_{(n)}(\mathrm{T}_k,\mathrm{Y}_l),$$
then $\beta_0=\beta\cap (l^k)=(l^k)=\lambda_0$. Hence, applying the Young rule to the partition $\lambda$ is equivalent to applying the Young rule to $\mu$ and $\nu$. So, we have
\begin{eqnarray*}
\widehat{M}\left(HS_{\lambda}(\mathrm{T}_k,\mathrm{Y}_l)HS_{(n)}(\mathrm{T}_k,\mathrm{Y}_l)\right) & = & v_1^l\cdots v_k^l\sum\limits_{m=0}^n M(S_{\mu}(\mathrm{T}_k)S_{(m)}(\mathrm{T}_k)) M(S_{\nu}(\mathrm{Y}_l)S_{(1^{n-m})}(\mathrm{Y}_l))\\
& = & \mathrm{V}_k^{\lambda_0}\sum\limits_{m=0}^n M(S_{\mu}(\mathrm{T}_k)S_{(m)}(\mathrm{T}_k)) M(S_{\nu}(\mathrm{Y}_l)S_{(1^{n-m})}(\mathrm{Y}_l))
\end{eqnarray*}
\end{proof}
\begin{example}
\textnormal{Consider the hook $H(2,3)$ and the partition $\lambda=(3,3,2,1)\vdash 9$. Note that $\lambda\in H(2,3)$, $\lambda_0=(3,3), \mu=(0)$ and $\nu=(2,1)$. We want to calculate
\[
\widehat{M}\left(HS_{\lambda}(\mathrm{T}_2,\mathrm{Y}_3)\cdot HS_{(3)}(\mathrm{T}_2,\mathrm{Y}_3)\right).
\]}\\
\textnormal{First, we shall calculate the $(2,3)$-multiplicity series using the Young rule for the tensor product of the partitions $\lambda$ and $(3)$. We get}
\begin{equation}\label{exem}
\begin{split}
HS_{\lambda}(\mathrm{T}_2,\mathrm{Y}_3)\cdot HS_{(3)}(\mathrm{T}_2,\mathrm{Y}_3)= & HS_{(6,3,2,1)}(\mathrm{T}_2,\mathrm{Y}_3) + HS_{(5,3,3,1)}(\mathrm{T}_2,\mathrm{Y}_3) \\
& +HS_{(5,3,2,2)}(\mathrm{T}_2,\mathrm{Y}_3) + HS_{(5,3,2,1,1)}(\mathrm{T}_2,\mathrm{Y}_3)+ HS_{(4,3,3,2)}(\mathrm{T}_2,\mathrm{Y}_3)\\
& +HS_{(4,3,3,3,1)}(\mathrm{T}_2,\mathrm{Y}_3) + HS_{(4,3,2,2,1)}(\mathrm{T}_2,\mathrm{Y}_3)+
 HS_{(3,3,3,2,1)}(\mathrm{T}_2,\mathrm{Y}_3).
\end{split}
\end{equation}
Then
\begin{eqnarray*}
\widehat{M}\left(HS_{\lambda}(\mathrm{T}_2,\mathrm{Y}_3)\cdot HS_{(3)}(\mathrm{T}_2,\mathrm{Y}_3)\right)& = & v_1^3v_2^3(y_1^3y_2 ^2y_3+  t_1y_1^3y_2^2+t_1y_1^2y_2^2y_3\\
 & & +t_1y_1^3y_1y_3
+t_1^ 2y_1^3y_2 + t_1^2y_1^2y_2^2+ t_1^2y_1^2y_2y_3+ t_1^3y_1^2y_2).
\end{eqnarray*}
\textnormal{Now, we are going to calculate $\widehat{M}\left(HS_{\lambda}(\mathrm{T}_2,\mathrm{Y}_3)\cdot HS_{(3)}(\mathrm{T}_2,\mathrm{Y}_3)\right)$
using the algorithm presented at the beginning of the section and the equality (\ref{igualdade young}). Notice that}
\begin{itemize}
\item  $S_{\mu}(\mathrm{T}_2)S_{(0)}(\mathrm{T}_2)=1$, $S_{\nu}(\mathrm{Y}_3)S_{(1^3)}(\mathrm{Y}_3)=S_{(3,2,1)}(\mathrm{Y}_3)$;
\item  $S_{\mu}(\mathrm{T}_2)S_{(1)}(\mathrm{T}_2)=S_{(1)}(\mathrm{T}_2)$, $S_{\nu}(\mathrm{Y}_3)S_{(1^2)}(\mathrm{Y}_3)=S_{(3,2)}(\mathrm{Y}_3)+ S_{(2,2,1)}(\mathrm{Y}_3)+ S_{(3,1,1)}(\mathrm{Y}_3)$;
\item $S_{\mu}(\mathrm{T}_2)S_{(2)}(\mathrm{T}_2)=S_{(2)}(\mathrm{T}_2)$, $S_{\nu}(\mathrm{Y}_3)S_{(1)}(\mathrm{Y}_3)=S_{(3,1)}(\mathrm{Y}_3)+ S_{(2,2)}(\mathrm{Y}_3)+ S_{(2,1,1)}(\mathrm{Y}_3)$;
\item $S_{\mu}(\mathrm{T}_2)S_{(3)}(\mathrm{T}_2)=S_{(3)}(\mathrm{T_2})$, $S_{\nu}(\mathrm{Y}_3)S_{(0)}(\mathrm{Y_3})=S_{\nu}(\mathrm{Y}_3)$.
\end{itemize}
\textnormal{Hence}
\begin{eqnarray*}
\widehat{M}\left(HS_{\lambda}(\mathrm{T}_2,\mathrm{Y}_3)\cdot HS_{(3)}(\mathrm{T}_2,\mathrm{Y}_3)\right)&  
 = & v_1^3v_2^3(y_1^3y_2 ^2y_3+  t_1y_1^3y_2^2+t_1y_1^2y_2^2y_3\\
 & & +t_1y_1^3y_1y_3
+t_1^ 2y_1^3y_2 + t_1^2y_1^2y_2^2+ t_1^2y_1^2y_2y_3+ t_1^3y_1^2y_2),\\
\end{eqnarray*}
\end{example}
\begin{lemma}
Let $\lambda$ be a partition in the hook $H(k,l)$ such that $\lambda_0\neq (l^k)$, then
{\footnotesize
\begin{eqnarray}
\widehat{M}\left(HS_{\lambda}(\mathrm{T}_k,\mathrm{Y}_l)HS_{(n)}(\mathrm{T}_k,\mathrm{Y}_l)\right)  =\sum\limits_{\beta_{0}\in\Pi} V_k^{\beta_{0}}\sum\limits_{p=0}^{n- |D_{\beta_0}\backslash D_{\lambda_0}|} M\left(S_{\mu} S_{(p)};\mathrm{T_{r_{\beta_{0}}}}\right)M\left(S_{\nu}S_{(1^{n-|D_{\beta_0}\backslash D_{\lambda_0}|-p})};\mathrm{Y}_c \right),
\end{eqnarray}}
where $\Pi=\{\beta_0\subseteq (l^k)| \beta \ \textrm{partipates in the decomposition of} \  HS_{\lambda}HS_{(n)}\}$, $r_{\beta_0}$ is the number of the rows of $\beta_0$ of size $l$ and $c$ is the number of the columns of $\lambda_0$ of size $k$.
\end{lemma}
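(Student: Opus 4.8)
The plan is to imitate the proof of Lemma \ref{iyoung1}, but to drop the hypothesis $\lambda_0=(l^k)$ and to keep careful track of the boxes that may now fall inside the $k\times l$ rectangle. By the Young rule (Case 1) for hook Schur functions, which multiply by the Littlewood--Richardson rule, the product $HS_\lambda(\mathrm{T}_k,\mathrm{Y}_l)HS_{(n)}(\mathrm{T}_k,\mathrm{Y}_l)$ is the sum of all $HS_\pi(\mathrm{T}_k,\mathrm{Y}_l)$ with $\pi\in H(k,l)$ obtained from $\lambda$ by adjoining a horizontal $n$-strip, that is, at most one new box per column. First I would group this sum according to the rectangular part $\beta_0=\pi\cap(l^k)$; the admissible $\beta_0$ are exactly the elements of $\Pi$, and for each of them precisely $|D_{\beta_0}\setminus D_{\lambda_0}|$ of the $n$ boxes lie in the rectangle, leaving $n'=n-|D_{\beta_0}\setminus D_{\lambda_0}|$ boxes to be distributed between the arm and the leg.

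The heart of the argument is a decoupling claim: once $\beta_0$ is fixed, a horizontal $n$-strip $\pi/\lambda$ with $\pi\cap(l^k)=\beta_0$ amounts to an independent choice of a horizontal $p$-strip enlarging the arm $\mu$ to $\mu'$ and a horizontal $(n'-p)$-strip enlarging the leg of $\lambda$, for some $0\le p\le n'$. I would establish this by reading the interlacing inequalities $\lambda_i\le\pi_i\le\lambda_{i-1}$ column by column and noting that three pairwise disjoint column ranges are involved: the columns $>l$ (the arm), the non-full columns $c+1,\dots,l$ of $\lambda_0$ (where the rectangle grows to $\beta_0$), and the full columns $1,\dots,c$ of $\lambda_0$ (where the leg grows). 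The only inequality linking the rectangle to the leg, $\pi_{k+1}\le\lambda_k$, becomes automatic once one knows that the leg occupies only full columns of $\lambda_0$ (so $\pi_{k+1}\le c\le\lambda_k$), hence it imposes no extra coupling.

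The delicate point, and where I expect the main obstacle, is to justify the two variable bounds and the Case 1 versus Case 2 dichotomy; both express the asymmetry of a horizontal strip. On the arm side a row may be completed to length $l$ inside the rectangle and then continued past column $l$ in the same step, since a horizontal strip permits several boxes in one row; thus the arm may sit in any full row of the enlarged $\beta_0$. Restricted to rows $\le k$, the interlacing is exactly the horizontal-strip condition $\mu'_i\le\mu_{i-1}$ relating $\mu'$ to $\mu$, so the arm contributions assemble into $S_\mu S_{(p)}$, truncated to $r_{\beta_0}$ variables because arm parts can only occupy full rows of $\beta_0$; here I would record the elementary fact that every full row of $\beta_0$ has a full predecessor in $\lambda_0$, so the strip is unobstructed. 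On the leg side a column cannot be both completed inside the rectangle (a box in a row $\le k$) and carry a leg box (a box in a row $>k$), as that would place two new boxes in one column; hence the leg occupies only the columns already full in $\lambda_0$, giving at most $c$ columns. Restricted to rows $>k$ the interlacing is the horizontal-strip condition on the leg diagram, which conjugates to the vertical-strip (Case 2) condition on $\nu$, assembling into $S_\nu S_{(1^{\,n'-p})}$ in the variables $\mathrm{Y}_c$.

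Finally, applying $\widehat{M}$, which sends the summand indexed by $(\beta_0,\mu',\nu')$ to the monomial $\mathrm{V}_k^{\beta_0}\mathrm{T}_k^{\mu'}\mathrm{Y}_l^{\nu'}$, the contribution of a fixed $\beta_0$ becomes
\[
\mathrm{V}_k^{\beta_0}\sum_{p=0}^{\,n'} M\!\left(S_\mu S_{(p)};\mathrm{T}_{r_{\beta_0}}\right)M\!\left(S_\nu S_{(1^{\,n'-p})};\mathrm{Y}_c\right),
\]
and summing over $\beta_0\in\Pi$ yields the asserted formula. As in Lemma \ref{iyoung1}, this last step is pure bookkeeping once the decoupling and the bounds $r_{\beta_0}$ and $c$ have been secured.
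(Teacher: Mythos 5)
Your argument is correct and follows the same route as the paper's proof: group the horizontal $n$-strips by the resulting rectangular part $\beta_0\in\Pi$, observe that the remaining $n-|D_{\beta_0}\backslash D_{\lambda_0}|$ boxes split independently between the arm (a horizontal strip on $\mu$, confined to the $r_{\beta_0}$ full rows of $\beta_0$) and the leg (a vertical strip on $\nu$, confined to the $c$ full columns of $\lambda_0$), and then apply $\widehat{M}$. Your justification of the asymmetric bounds $r_{\beta_0}$ versus $c$ via the one-box-per-column constraint, and of the automatic interlacing $\pi_{k+1}\le\lambda_k$, is in fact more explicit than what the paper records, but it is the same proof.
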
\label{iyoung2}
\begin{proof}
Note that the difference between this case and Lemma  \ref{iyoung1} is that if $\beta\in H(k,l)$ participates in the decomposition of 
 $$HS_{\lambda}(\mathrm{T}_k,\mathrm{Y}_l)HS_{(n)}(\mathrm{T}_k,\mathrm{Y}_l)$$
as sum of hook Schur functions, then $\beta_0$ is not necessarily $\lambda_0$.
Consider the set $\Pi$ and notice that if $\lambda_1\geq l$ or $\lambda_1'\geq k$, then $\lambda_0\in \Pi$. The possible $\beta_0$ are those whose diagrams are obtained from the diagram of $\lambda_0$ when we apply the Young rule to the partitions $\lambda_0$ and $(m)$  for some $0\leq m\leq n$ such that $D_{\beta_0}\subseteq D_{(l^k)}$.\\
\indent
Now, we identify $\lambda$ with the triple $(\lambda_0,\mu, \nu)$. Suppose that $D_{\beta_0}$ is obtained from $D_{\lambda_0}$ by adding $m$ boxes. Note that $m=|D_{\beta_0}\backslash D_{\lambda_0}|$, that is, $m$ is the number of boxes in the skew-diagram $D_{\beta_0}\backslash D_{\lambda_0}$.
If we want to know what partitions $\beta$  satisfy $\beta\cap (l^k)=\beta_0$, we have to add  a total of $n-m$ boxes to the diagrams $D_{\mu}$ and $D_{\nu}$ using the Young rule. Consider the numbers $r_{\beta_0}$ and $c$, observe that we can add boxes to the diagram $D_{\mu}$ up to line $r_{\beta_0}$.  In the case of the diagram $D_{\nu}$, it is only  allowed to add boxes up to line $c$.\\
Hence, the partitions $\beta\in H(k,l)$ participating in the decomposition of  
$$\widehat{M}(HS_{\lambda}(\mathrm{T}_k,\mathrm{Y}_l)HS_{(n)}(\mathrm{T}_k,\mathrm{Y}_l))$$
such that $\beta\cap (l^k)=\beta_0$ are determined by the following expression
$$ V_k^{\beta_{0}}\sum\limits_{p=0}^{n-|D_{\beta_0}\backslash D_{\lambda_0}|} M\left(S_{\mu} S_{(p)};\mathrm{T_{r_{\beta_{0,m}}}}\right)M\left(S_{\nu}S_{(1^{n-|D_{\beta_0}\backslash D_{\lambda_0}|-p})};\mathrm{Y}_c\right).$$
It follows that
{\small
\begin{equation*}\label{G1geral}
\widehat{M}\left(HS_{\lambda}(\mathrm{T}_k,\mathrm{Y}_l)HS_{(n)}(\mathrm{T}_k,\mathrm{Y}_l)\right)  =\sum\limits_{\beta_{0}\in\Pi} V_k^{\beta_{0}}\sum\limits_{p=0}^{n- |D_{\beta_0}\backslash D_{\lambda_0}|} M\left(S_{\mu} S_{(p)};\mathrm{T_{r_{\beta_{0}}}}\right)M\left(S_{\nu}S_{(1^{n-|D_{\beta_0}\backslash D_{\lambda_0}|-p})};\mathrm{Y}_c \right).
\end{equation*}
}
\end{proof}
Note that if $r_{\beta_0}=0$ or $c=0$, then $\emptyset=Y_{0}=T_{0}$. Hence $1=Y(1,\mathrm{T}_0)=\overline{Y}(1,\mathrm{Y}_0)$.
\begin{example}
\textnormal{Consider $\lambda=(5,1,1)\in H(2,2)$. In this case $\lambda_0=(2,1)$, $\mu=(3)$ and $\nu=(1)$.
If we want to calculate $\widehat{M}(HS_{\lambda}(\mathrm{T}_2,\mathrm{Y}_2)HS_{(2)}(\mathrm{T}_2,\mathrm{Y}_2))$, first note that if $HS_{\beta}(\mathrm{T}_2,\mathrm{Y}_2)$ participates in the decomposition of
\[
HS_{\lambda}(\mathrm{T}_2,\mathrm{Y}_2)HS_{(2)}(\mathrm{T}_2,\mathrm{Y}_2)
\]
as a sum of hook Schur functions then $\beta\in H(2,2)$, $\beta\vdash 9$ and $\beta_0=\lambda_0$ or $\beta_{0,1}=(2,2)$.
Note that the second possibility for $\beta_0$ is obtained by applying the Young rule for the partitions $\lambda_0$ and $(1)$}.

\noindent
\textnormal{For $\beta_0=(2,1)$, we have}
\begin{itemize}
\item $S_{\mu}(\mathrm{T}_1)S_{(2)}(\mathrm{T}_1)=S_{(5)}(\mathrm{T}_1)$, $S_{\nu}(\mathrm{Y}_1)S_{(0)}(\mathrm{Y}_1)=S_{\nu}(\mathrm{Y}_1)$,
\item $S_{\mu}(\mathrm{T}_1)S_{(1)}(\mathrm{T}_1)= S_{(4)}(\mathrm{T}_1)$, $S_{\nu}(\mathrm{Y}_1)S_{(1)}(\mathrm{Y}_1)= S_{(2)}(\mathrm{Y}_1)$,
\item  $S_{\mu}(\mathrm{T}_1)S_{(0)}(\mathrm{T}_1)=S_{\mu}(\mathrm{T}_1)$, $S_{\nu}(\mathrm{Y}_1)S_{(1^2)}(\mathrm{Y}_1)=0$.
\end{itemize}
\textnormal{For $\beta_0=(2,2)$, we have}
\begin{itemize}
\item $S_{\mu}(\mathrm{T}_2)S_{(1)}(\mathrm{T}_2)=S_{(4)}(\mathrm{T}_2)+ S_{(3,1)}(\mathrm{T}_2)$, $S_{\nu}(\mathrm{Y}_1)S_{(0)}(\mathrm{Y}_1)=S_{\nu}(\mathrm{Y}_1)$,
\item $S_{\mu}(\mathrm{T}_2)S_{(0)}(\mathrm{T}_2)=S_{\mu}(\mathrm{T}_2)$,
$S_{\nu}(\mathrm{Y}_1)S_{(1)}(\mathrm{Y}_1)=S_{(2)}(\mathrm{Y}_1)$.
\end{itemize}
\textnormal{Then}
\begin{eqnarray*}
\widehat{M}\left(S_{\lambda}(\mathrm{T}_2,\mathrm{Y}_2)\right) & = & v_1^2v_2( t_1^5y_1+ t_1^4y_1^2)+ v_1^2v_2^2(t_1^4y_1+t_1^3t_2y_1+ t_1^3y_1^2).
\end{eqnarray*}
\end{example}
Using Lemmas \ref{iyoung1}, \ref{iyoung2} and the linearity of $\widehat{M}$ and $M$, we have the following result

\begin{theorem}\label{G1des}
Let $\lambda$ be a partition in $H(k,l)$. The action of $G_1$ on $\widehat{M}(HS_{\lambda}(\mathrm{T}_k,\mathrm{Y}_l))$ can be described as follows
\begin{enumerate}
\item[{\rm (i)}] If $\lambda_0=(l^k)$, then
\[
G_1(\widehat{M}(HS_{\lambda}(\mathrm{T}_k,\mathrm{Y}_l)))=
 \mathrm{V}_k^{\lambda_0}Y(M(S_{\nu};T_k))\overline{Y}(M(S_{\mu};\mathrm{Y}_l)).
 \]
\item[{\rm (ii)}] If $\lambda_0\neq (l^k)$, then
\[
G_1(\widehat{M}(HS_{\lambda}(\mathrm{T}_k;\mathrm{Y}_l)))=\sum\limits_{\beta_0\in\Omega}\mathrm{V}_k^{\beta_0}Y(M(S_{\mu};\mathrm{T}_{r_{\beta_0}}))\overline{Y}(M(S_{\nu});\mathrm{Y}_c)),
\]
where $\Omega=\{\beta_0\subseteq (l^k)\mid D_{\beta_0}  \ \textrm{is obtained from}\ D_{\lambda_0 }\
\textrm{by the Young rule (Case 1)} \}$, $r_{\beta_0}$ is the number of rows of the diagram of $D_{\beta_0}$ of size $l$ and $c$ is the number of columns of the diagram of $D_{\lambda_0}$ of size $k$.
\end{enumerate}
\end{theorem}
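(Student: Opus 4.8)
The plan is to reduce the statement to the two preceding lemmas and then perform a geometric-series reindexing. First, by Remark \ref{HSn} the rational factor defining $G_1$ is exactly the generating series of the one-row hook Schur functions, so that
\[
G_1(\widehat{M}(HS_\lambda(\mathrm{T}_k,\mathrm{Y}_l)))=\widehat{M}\left(HS_\lambda(\mathrm{T}_k,\mathrm{Y}_l)\sum_{n\geq 0}HS_{(n)}(\mathrm{T}_k,\mathrm{Y}_l)\right).
\]
Since $\widehat{M}$ is linear, this equals $\sum_{n\geq 0}\widehat{M}(HS_\lambda HS_{(n)})$, and it suffices to evaluate each summand by Lemma \ref{iyoung1} (when $\lambda_0=(l^k)$) or Lemma \ref{iyoung2} (when $\lambda_0\neq(l^k)$) and then to sum on $n$. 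Finally, linearity of $\widehat M$ and $M$ extends the resulting identity from a single $HS_\lambda$ to all of $\Lambda^{(k,l)}$.

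For part (i), inserting the formula of Lemma \ref{iyoung1} and exchanging the order of summation gives
\[
\sum_{n\geq 0}\widehat{M}(HS_\lambda HS_{(n)})=\mathrm{V}_k^{\lambda_0}\sum_{n\geq 0}\sum_{m=0}^{n}M(S_\mu S_{(m)};\mathrm{T}_k)\,M(S_\nu S_{(1^{n-m})};\mathrm{Y}_l).
\]
Writing $s=n-m$ and letting the pair $(m,s)$ range independently over $\mathbb{Z}_{\geq 0}^2$, the double sum factors as a product of two single sums. The key identities are $\sum_{m\geq 0}S_{(m)}(\mathrm{T}_k)=\prod_{i=1}^k(1-t_i)^{-1}$ and $\sum_{s\geq 0}S_{(1^s)}(\mathrm{Y}_l)=\prod_{s=1}^l(1+y_s)$, where the latter is in fact a finite sum because $S_{(1^s)}(\mathrm{Y}_l)=0$ for $s>l$. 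By the definitions of $Y$ and $\overline{Y}$ the two factors are then precisely the Young-derived of $M(S_\mu;\mathrm{T}_k)$ and the conjugate-Young-derived of $M(S_\nu;\mathrm{Y}_l)$, which is the asserted expression.

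For part (ii) the same mechanism applies, but one must account for the fact that, since $\lambda_0\neq(l^k)$, the piece $\beta_0=\beta\cap(l^k)$ of a contributing $\beta$ need no longer equal $\lambda_0$: the first $m=|D_{\beta_0}\setminus D_{\lambda_0}|$ of the $n$ added boxes may enlarge $\lambda_0$ inside the rectangle before the remaining $n-m$ boxes are distributed to the arm and the leg. I would reorganize $\sum_{n\geq 0}$ by first summing over all reachable $\beta_0\in\Omega$ (those obtained from $\lambda_0$ by Case 1 of the Young rule), observing that each such $\beta_0$ fixes the offset $m$, and then summing the residual boxes. For fixed $\beta_0$ the residual strips decouple: $p$ boxes form a horizontal strip on $\mu$ in the $r_{\beta_0}$ rows of length $l$, and $n-m-p$ boxes form a vertical strip on $\nu$ in the $c$ columns of height $k$. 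Letting $p$ and $q=n-m-p$ run independently over $\mathbb{Z}_{\geq 0}$ reproduces $Y(M(S_\mu;\mathrm{T}_{r_{\beta_0}}))\,\overline{Y}(M(S_\nu;\mathrm{Y}_c))$, exactly as in part (i).

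The routine part is the two geometric-series evaluations. The main obstacle is the bookkeeping in part (ii): one must verify that the union over $n$ of the index sets $\Pi$ from Lemma \ref{iyoung2} is exactly $\Omega$, that the map $\beta_0\mapsto m=|D_{\beta_0}\setminus D_{\lambda_0}|$ is well defined on $\Omega$, and that after fixing $\beta_0$ the horizontal and vertical strips are genuinely independent, so that the rearrangement of the sum is legitimate termwise in the ring of formal power series. Once this is checked, the decoupled sums collapse to the operators $Y$ and $\overline{Y}$ in the prescribed variable sets and the proof is complete.
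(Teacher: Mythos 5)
Your proof is correct and follows essentially the same route as the paper, which obtains the theorem by summing Lemmas \ref{iyoung1} and \ref{iyoung2} over $n$ (using that the factor defining $G_1$ is $\sum_{n\ge 0}HS_{(n)}(\mathrm{T}_k,\mathrm{Y}_l)$) and invoking linearity of $\widehat{M}$ and $M$; your write-up merely makes the interchange of summation and the geometric-series collapse explicit. One remark: the expression you derive in part (i), $\mathrm{V}_k^{\lambda_0}\,Y(M(S_{\mu};\mathrm{T}_k))\,\overline{Y}(M(S_{\nu};\mathrm{Y}_l))$, is the one consistent with Lemma \ref{iyoung1} and with part (ii); the printed statement of part (i) interchanges $\mu$ and $\nu$, which appears to be a typo in the paper rather than an error in your argument.
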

Now, let us study $G_2$. Our goal is to describe $G_2$ in terms of the operators $Y$ and $\overline{Y}$ defined above.\\
Note that $(n)$ and $(1^n)$ are conjugate partitions. So, from Theorem \ref{G1des} we get the following result.

\begin{corollary}\label{G2des}
Let $\lambda$ be a partition in $H(k,l)$. The action of $G_2$ on $\widehat{M}(HS_{\lambda}(\mathrm{T}_k,\mathrm{Y}_l))$ can be described as follows:
\begin{enumerate}
\item[{\rm (i)}]  If $\lambda_0=(l^k)$ then
\[
G_2(\widehat{M}(HS_{\lambda}(\mathrm{T}_k,\mathrm{Y}_l)))=
 \mathrm{V}_k^{\lambda_0}\overline{Y}(M(S_{\nu};T_k))Y(M(S_{\mu};\mathrm{Y}_l))
 \]
\item[{\rm (ii)}] If $\lambda_0\neq (l^k)$ then
\[
G_2(\widehat{M}(HS_{\lambda}(\mathrm{T}_k,\mathrm{Y}_l)))=\sum\limits_{\beta_0\in\Omega'}\mathrm{V}_k^{\beta_0}\overline{Y}(M(S_{\mu};\mathrm{T}_{r}))Y(M(S_{\nu});\mathrm{Y}_{c_{\beta_0}})),
\]
where $\Omega'=\{\beta_0\subseteq (l^k)\mid D_{\beta_0}  \ \textrm{is obtained from}\ D_{\lambda_0 }\ \textrm{by Young rule (Case 2)} \}$, $r$ is the number of rows of the diagram of $D_{\lambda_0}$ of size $l$ and $c_{\beta_0}$ is the number of columns of the diagram of $D_{\beta_0}$ of size $k$.
\end{enumerate}
\end{corollary}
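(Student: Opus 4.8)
The plan is to deduce Corollary \ref{G2des} from Theorem \ref{G1des} by exploiting the conjugation symmetry of hook Schur functions, exactly as the sentence ``Note that $(n)$ and $(1^n)$ are conjugate partitions'' suggests. By Remark \ref{HSn} the operator $G_2$ is, through the identification $\widehat{M}$, multiplication inside $\Lambda^{(k,l)}$ by $\sum_{m\geq 0}HS_{(1^m)}(\mathrm{T}_k,\mathrm{Y}_l)$, in perfect analogy with $G_1$ being multiplication by $\sum_{n\geq 0}HS_{(n)}(\mathrm{T}_k,\mathrm{Y}_l)$. Hence it suffices to compute the products $HS_{\lambda}(\mathrm{T}_k,\mathrm{Y}_l)HS_{(1^m)}(\mathrm{T}_k,\mathrm{Y}_l)$ and to sum over $m\geq 0$; these are the ``column'' analogues of the ``row'' products $HS_{\lambda}HS_{(n)}$ already handled in Lemmas \ref{iyoung1} and \ref{iyoung2}.

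First I would invoke the transpose duality $HS_{\lambda}(\mathrm{T}_k,\mathrm{Y}_l)=HS_{\lambda'}(\mathrm{Y}_l,\mathrm{T}_k)$ for hook Schur functions, obtained by transposing the diagram and interchanging the two alphabets (see \cite{BR2}); in particular $HS_{(1^m)}(\mathrm{T}_k,\mathrm{Y}_l)=HS_{(m)}(\mathrm{Y}_l,\mathrm{T}_k)$ since $(1^m)'=(m)$. Applying this to both factors turns $HS_{\lambda}(\mathrm{T}_k,\mathrm{Y}_l)HS_{(1^m)}(\mathrm{T}_k,\mathrm{Y}_l)$ into $HS_{\lambda'}(\mathrm{Y}_l,\mathrm{T}_k)HS_{(m)}(\mathrm{Y}_l,\mathrm{T}_k)$, which is precisely the situation controlled by Theorem \ref{G1des}, but now in the hook $H(l,k)$, with the two alphabets interchanged and $\lambda$ replaced by its conjugate $\lambda'$. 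Since the Littlewood--Richardson rule for hook Schur functions is conjugation invariant, the partitions $\beta$ occurring in the original product are exactly the conjugates of those occurring in the transposed product, with the same multiplicities. I would then track how the triple decomposition $\lambda\to(\lambda_0,\mu,\nu)$ behaves under transposition: the rectangle $(l^k)$ becomes $(k^l)$, the rectangular part $\lambda_0$ becomes $(\lambda_0)'$, and the arm and leg pieces swap, so the $\mu$-part of $\lambda$ plays the role of the $\nu$-part of $\lambda'$ and conversely. Consequently Case~1 of the Young rule (no two new boxes in a single column), which drives $G_1$, is replaced by Case~2 (no two new boxes in a single row), which drives $G_2$; the operators $Y$ and $\overline{Y}$ interchange their roles; the set $\Omega$ of Theorem \ref{G1des}(ii) turns into the set $\Omega'$ of the corollary; and the count of rows of $D_{\beta_0}$ of size $l$ is exchanged with the count $c_{\beta_0}$ of columns of $D_{\beta_0}$ of size $k$, while the count $c$ of columns of $D_{\lambda_0}$ of size $k$ becomes the count $r$ of rows of $D_{\lambda_0}$ of size $l$. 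Substituting all of this into the conclusion of Theorem \ref{G1des} and reading the outcome back through $\widehat{M}$ in the original $(k,l)$-coordinates gives formulas (i) and (ii) of the corollary, the former being the special case $\lambda_0=(l^k)$ in which $\Omega'=\{\lambda_0\}$.

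The main obstacle, and essentially the only place demanding care, is the bookkeeping in this last step: one must verify that transposition interacts correctly with the $\widehat{M}$-encoding, so that the monomial $\mathrm{V}_k^{\beta_0}$ recording the rectangular part is indeed read off in the original coordinates rather than the transposed ones, and that the variable ranges $\mathrm{T}_r$ and $\mathrm{Y}_{c_{\beta_0}}$ feeding the operators $\overline{Y}$ and $Y$ are exactly the rows and columns of $D_{\lambda_0}$ and $D_{\beta_0}$ that survive after the conjugated Young rule is applied. Once the transpose symmetry of both the hook Schur functions and the Littlewood--Richardson rule is granted, this verification is routine, being entirely parallel to the arguments of Lemmas \ref{iyoung1} and \ref{iyoung2}, and the result follows by linearity of $\widehat{M}$ and $M$.
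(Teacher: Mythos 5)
Your proposal is correct and follows exactly the route the paper takes: the paper derives Corollary \ref{G2des} from Theorem \ref{G1des} with the single remark that $(n)$ and $(1^n)$ are conjugate partitions, and your argument is simply a careful expansion of that conjugation/transposition symmetry (duality of hook Schur functions, swap of the two alphabets, of $\mu$ and $\nu$, of the two cases of the Young rule, and of the operators $Y$ and $\overline{Y}$). The bookkeeping you flag as the delicate point is indeed the only content beyond the one-line observation, and you handle it as the paper implicitly intends.
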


The following theorem is an analog of Corollary 13 of \cite{BD}. We obtain an expression for the $(k,l)$-multiplicity series of $UT_n(E)$.

\begin{theorem}\label{hookmultUTE}
Let $E$ be the infinite dimensional Grassmann algebra. Then
\[
\widehat{M}(UT_n(E);\mathrm{V}_k,\mathrm{T}_k,\mathrm{Y}_l)=\sum_{j=1}^n\sum_{q=0}^{j-1}\sum_{\lambda\vdash q}(-1)^{j-q-1}{n\choose j}{j-1\choose q}d_{\lambda}G^j(\mathrm{V}_k^{\lambda_0}\mathrm{T}_k^\mu \mathrm{Y}_l^\nu),
\]
where $d_{\lambda}$ is the degree of the $S_{\lambda}$-character $\chi_{\lambda}$ and
\[
T_k^\mu Y_l^\nu=t_1^{\mu_1}\cdots t_k^{\mu_k}y_1^{\nu_1}\cdots y_l^{\nu_l}
\]
for $\mu=(\mu_1,\ldots,\mu_k)$, $\nu=(\nu_1,\ldots,\nu_l)$ partitions outside the rectangle $(l^k)$.
\end{theorem}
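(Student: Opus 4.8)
The plan is to follow the proof of Corollary~\ref{MRK} essentially verbatim, replacing the Hilbert series by the double Hilbert series, the Schur functions by hook Schur functions, and the operator $Z=Y\circ\widehat{Y}$ by $G$. First I would invoke Corollary~\ref{HSUE} together with Proposition~\ref{HSE} to write
\[
H(UT_n(E);\mathrm{T}_k,\mathrm{Y}_l)=\sum_{j=1}^n\binom{n}{j}H(E;\mathrm{T}_k,\mathrm{Y}_l)^j\,(S_{(1)}(\mathrm{T}_k,\mathrm{Y}_l)-1)^{j-1},
\]
using that $S_{(1)}(\mathrm{T}_k,\mathrm{Y}_l)=HS_{(1)}(\mathrm{T}_k,\mathrm{Y}_l)=\sum_{i=1}^k t_i+\sum_{s=1}^l y_s$.

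Next I would expand $(S_{(1)}-1)^{j-1}=\sum_{q=0}^{j-1}(-1)^{j-1-q}\binom{j-1}{q}S_{(1)}^q$ and apply the hook analogue of the classical identity $S_{(1)}^q=\sum_{\lambda\vdash q}d_\lambda S_\lambda$, namely
\[
S_{(1)}(\mathrm{T}_k,\mathrm{Y}_l)^q=\sum_{\lambda\vdash q}d_\lambda HS_\lambda(\mathrm{T}_k,\mathrm{Y}_l).
\]
This identity holds because the hook Schur functions multiply by the Littlewood--Richardson rule exactly as ordinary Schur functions do (see \cite{BR2}), so the $q$-fold product $HS_{(1)}^q$ mirrors the decomposition of the regular $S_q$-character $\chi_{(1)}\widehat{\otimes}\cdots\widehat{\otimes}\chi_{(1)}=\sum_{\lambda\vdash q}d_\lambda\chi_\lambda$. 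Here one must check that the hook is wide enough: since $q\leq j-1\leq n-1<n\leq k$, every $\lambda\vdash q$ has at most $q<k+1$ parts and therefore lies in $H(k,l)$, so no summand $HS_\lambda$ vanishes and the full decomposition is available. Substituting yields
\[
H(UT_n(E);\mathrm{T}_k,\mathrm{Y}_l)=\sum_{j=1}^n\sum_{q=0}^{j-1}\sum_{\lambda\vdash q}(-1)^{j-1-q}\binom{n}{j}\binom{j-1}{q}d_\lambda\,H(E;\mathrm{T}_k,\mathrm{Y}_l)^j HS_\lambda(\mathrm{T}_k,\mathrm{Y}_l).
\]

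Finally I would apply $\widehat{M}$ term by term, using its linearity. By the definition of the $(k,l)$-multiplicity series, $\widehat{M}(HS_\lambda(\mathrm{T}_k,\mathrm{Y}_l))=\mathrm{V}_k^{\lambda_0}\mathrm{T}_k^\mu\mathrm{Y}_l^\nu$, and since $G(\widehat{M}(g))=\widehat{M}\big(g\cdot H(E;\mathrm{T}_k,\mathrm{Y}_l)\big)$ by the definition of $G$ and Proposition~\ref{HSE}, a straightforward induction on $j$ gives $\widehat{M}\big(H(E;\mathrm{T}_k,\mathrm{Y}_l)^j HS_\lambda\big)=G^j(\mathrm{V}_k^{\lambda_0}\mathrm{T}_k^\mu\mathrm{Y}_l^\nu)$. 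Combining this with the displayed expansion produces exactly the asserted formula.

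The calculations here are routine; the only step demanding genuine care is the decomposition $S_{(1)}^q=\sum_{\lambda\vdash q}d_\lambda HS_\lambda$, that is, confirming that the multiplicative structure of the hook Schur functions faithfully reflects that of the ordinary Schur functions, together with the observation that the constraint $k\geq n$ keeps all relevant $\lambda$ inside $H(k,l)$. Once this is granted, the theorem is a formal consequence of Corollary~\ref{HSUE} and the multiplicativity of $G$ under $\widehat{M}$.
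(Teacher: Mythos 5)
Your proposal is correct and follows essentially the same route as the paper: expand $(S_{(1)}-1)^{j-1}$ binomially in Corollary \ref{prodhs}/\ref{HSUE}, decompose $HS_{(1)}^q=\sum_{\lambda\vdash q}d_\lambda HS_\lambda$ via the Littlewood--Richardson multiplication of hook Schur functions, and then apply $\widehat{M}$ termwise using $G^j(\mathrm{V}_k^{\lambda_0}\mathrm{T}_k^\mu\mathrm{Y}_l^\nu)=\widehat{M}\bigl(H(E;\mathrm{T}_k,\mathrm{Y}_l)^j HS_\lambda\bigr)$. The extra remark about $k\geq n$ keeping every $\lambda\vdash q$ inside $H(k,l)$ is a sensible clarification the paper leaves implicit, but otherwise the argument matches the paper's proof step for step.
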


\begin{proof}
Expanding the expression of $H(UT_n(E);\mathrm{T}_k,\mathrm{Y}_l)$ from Corollary \ref{HSUE} we obtain:
{\small
\begin{eqnarray*}
H(UT_n(E);\mathrm{T}_k,\mathrm{Y}_l) & = & \sum_{j=1}^{n} \binom{n}{j}{\left( \dfrac{1}{2}\left[1+ \prod_{i=1}^k\prod_{j=1}^l \dfrac{(1+t_i)}{(1-t_i)} \dfrac{(1+y_j)}{(1-y_j)}\right]\right)}^{j}\sum\limits_{q=0}^{j-1}{(-1)}^{j-1-q}\binom{j-1}{q}{\left(\sum\limits_{i=1} ^l t_i+ \sum\limits_{s=1} ^l y_s\right)}^{q}.
\end{eqnarray*}}
Since
\[
{\left(\sum\limits_{i=1} ^l t_i+ \sum\limits_{s=1} ^l y_s\right)}^{q}=HS_{(1)}(\mathrm{T}_k,\mathrm{Y}_l)^q=\sum\limits_{\lambda\vdash q}d_{\lambda}HS_{\lambda}(\mathrm{T}_k,\mathrm{Y}_l),
\]
where $d_{\lambda}$ is the degree of the $S_{|\lambda|}$-character $\chi_{\lambda}$, it follows that
{\small
\[
H(UT_n(E);\mathrm{T}k,\mathrm{Y}_l)=\sum_{j=1}^n\sum_{q=0}^{j-1}\sum_{\lambda\vdash q}(-1)^{j-q-1}{n\choose j}{j-1\choose q}d_{\lambda}
{\left( \dfrac{1}{2}\left[1+ \prod_{i=1}^k\prod_{j=1}^l \dfrac{(1+t_i)}{(1-t_i)} \dfrac{(1+y_j)}{(1-y_j)}\right]\right)}^{j} HS_{\lambda}(\mathrm{T}_k,\mathrm{Y}_l).
\]}
Recall that we can identify $\lambda\in H(k,l)$ with the partitions $\lambda_0,\mu,\nu$ (see Figure \ref{defuv}), so
\[
G^j(\mathrm{V}_k^{\lambda_0}\mathrm{T}_k^{\mu}\mathrm{Y}_l^{\nu})= G^j (M(HS_{\lambda}(\mathrm{T}_k,\mathrm{Y}_l)))=M\left({\left( \dfrac{1}{2}\left[1+ \prod_{i=1}^k\prod_{j=1}^l \dfrac{(1+t_i)}{(1-t_i)} \dfrac{(1+y_j)}{(1-y_j)}\right]\right)}^{j}\cdot HS_{\lambda}(\mathrm{T}_k,\mathrm{Y}_l)\right).
\]
Hence
\[
\widehat{M}(UT_n(E);V_k,T_k,Y_l)=\sum_{j=1}^n\sum_{q=0}^{j-1}\sum_{\lambda\vdash q}(-1)^{j-q-1}{n\choose j}{j-1\choose q}d_{\lambda}G^j(V_k^{\lambda_0}T_k^\mu U_l^\nu),
\]
and the proof follows.
\end{proof}

\section{Applications of the $(k,l)$-multiplicity series of $UT_n(E)$}
Theorems \ref{G1des}, \ref{G2des} and \ref{hookmultUTE} give us an algorithm to calculate the multiplicities in the cocharacter sequence of $UT_n(E)$.
In order to show how the algorithm works, we work on some particular cases.

\begin{theorem}
The $(1,1)$-multiplicity series of $E$ is
\[
\widehat{M}(E;v,t,y)= 1+\frac{v}{(1-t)(1-y)}.
\]
\end{theorem}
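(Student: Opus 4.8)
The plan is to read the nonzero multiplicities straight off the cocharacter sequence of $E$ and then translate each contributing partition into the monomial dictated by the definition of the $(k,l)$-multiplicity series in the case $k=l=1$. By Theorem~\ref{teo KRG} the only partitions with $m_\lambda(E)\neq 0$ are the hooks $\lambda=(p,1^{n-p})$ for $n\geq 1$ and $1\leq p\leq n$, each occurring with multiplicity $1$, together with the empty partition (multiplicity $1$). All of these lie in $H(1,1)$, so by Theorem~\ref{Berel1} the entire double Hilbert series of $E$ is recorded by $\widehat{M}(E;v,t,y)$, and it suffices to list the monomials.

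First I would dispose of the empty partition: here $\lambda_0=\mu=\nu=\emptyset$, contributing the constant term $1$. The core step is to split each nonempty hook $\lambda=(p,1^{n-p})$ into its triple $(\lambda_0,\mu,\nu)$ relative to the $1\times 1$ rectangle $(1^1)$. Since $(1,1)\in D_\lambda$, the intersection with the rectangle is $\lambda_0=(1)$; the arm of $\lambda$ lying to the right of $\lambda_0$ in the first row consists of $p-1$ boxes, so $\mu=(p-1)$; the leg lying below $\lambda_0$ in the first column consists of $(n-p+1)-1=n-p$ boxes, giving $\nu'=(1^{n-p})$, that is, $\nu=(n-p)$. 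By the definition of $\widehat{M}$ the associated monomial $\mathrm{V}_1^{\lambda_0}\mathrm{T}_1^{\mu}\mathrm{Y}_1^{\nu}$ equals $v\,t^{p-1}y^{n-p}$.

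Finally I would assemble and sum. Using that every nonzero multiplicity equals $1$,
\[
\widehat{M}(E;v,t,y)=1+\sum_{n\geq 1}\sum_{p=1}^{n}v\,t^{p-1}y^{n-p},
\]
and the substitution $a=p-1$, $b=n-p$ gives a bijection between the pairs $(n,p)$ with $n\geq 1$, $1\leq p\leq n$ and the pairs $(a,b)$ of nonnegative integers, whence
\[
\widehat{M}(E;v,t,y)=1+v\sum_{a\geq 0}t^{a}\sum_{b\geq 0}y^{b}=1+\frac{v}{(1-t)(1-y)}.
\]

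I do not expect a genuine obstacle; the only point requiring care is the bookkeeping in the decomposition $\lambda\mapsto(\lambda_0,\mu,\nu)$ and the consistency at the edge cases $p=1$ (empty arm, $\mu=\emptyset$) and $p=n$ (empty leg, $\nu=\emptyset$), which must reproduce the pure powers $v\,y^{n-1}$ and $v\,t^{n-1}$ respectively. As an independent cross-check one can specialize Theorem~\ref{hookmultUTE} at $n=1$, using $UT_1(E)=E$, to obtain $\widehat{M}(E)=G(1)$; applying the operator $G$ to the constant $1$ amounts, by Proposition~\ref{HSE}, to taking $\widehat{M}$ of $\frac{1}{2}\bigl(1+\frac{(1+t)(1+y)}{(1-t)(1-y)}\bigr)=H(E;t,y)$, so both routes must yield the same series.
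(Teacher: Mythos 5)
Your proposal is correct, and the bookkeeping checks out: for $\lambda=(p,1^{n-p})$ the decomposition relative to the $1\times 1$ rectangle is indeed $\lambda_0=(1)$, $\mu=(p-1)$, $\nu=(n-p)$, the map $(n,p)\mapsto(p-1,n-p)$ is a bijection onto $\mathbb{Z}_{\geq 0}^2$, and the edge cases $p=1$ and $p=n$ come out right. However, your route is genuinely different from the paper's. You read the answer directly off Theorem \ref{teo KRG} (Olsson--Regev) and the definition of $\widehat{M}$, which is more elementary and self-contained; but it is only available because the cocharacter sequence of $E$ is already known in closed form. The paper instead treats this theorem as the first test of its algorithm: it invokes Theorem \ref{hookmultUTE} to write $\widehat{M}(E;v,t,y)=G(1)$ and then evaluates $G(1)=\tfrac{1}{2}\bigl(1+G_2(G_1(1))\bigr)$ by explicitly applying the operator descriptions of $G_1$ and $G_2$ from Theorem \ref{G1des} and Corollary \ref{G2des} (computing $G_1(1)=1+\tfrac{v}{1-t}$, then $G_2$ of each summand). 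What the paper's approach buys is a demonstration of the machinery that is actually needed for $UT_n(E)$ with $n\geq 2$, where no closed form of the cocharacters is available in advance; what yours buys is brevity and independence from the operator formalism. The cross-check you mention at the end is, in fact, precisely the paper's proof.
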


\begin{proof}
Using Theorem \ref{hookmultUTE}, we have $\widehat{M}(E;v,t,y)=G(1)$. 
By theorem \ref{G1geral} we get
\[
G_1(1)=G_1(\widehat{M}(HS_{\lambda_0});v,t,y)= Y(1;\mathrm{T}_0)\overline{Y}(1;\mathrm{Y}_0)+ vY(1;\mathrm{T}_1)\overline{Y}(1;\mathrm{Y}_0)= 1+ \dfrac{v}{1-t}.
\]
Now, we are going to compute $G_2(1)$ and $G_2\left(\dfrac{v}{1-t}\right)$. Using 
Corollary \ref{G2des}, we get
\[
G_2(1)= G_2(\widehat{M}(HS_{(0)});v,t,y)= \overline{Y}(1,\mathrm{T}_0)Y(1,\mathrm{Y}_0)+ v\overline{Y}(1,\mathrm{T}_0)Y(1,\mathrm{Y}_1)= 1+\dfrac{v}{1-y}
\]
\[
G_2\left(\dfrac{v}{1-t}\right)= v\overline{Y}\left(\dfrac{1}{1-t};\mathrm{T}_1\right)Y(1;\mathrm{Y}_1)= \dfrac{v}{(1-t)(1-y)}+ \dfrac{vt}{(1-t)(1-y)}.
\]
Hence
\[
G(1)  =  \dfrac{1}{2}\left(\mathbb{1}+ G_2\circ G_1(1)\right)
 =  \dfrac{1}{2}\left(1+ 1+\dfrac{2v}{(1-t)(1-y)}\right)
 =  1 + \dfrac{v}{(1-t)(1-y)},
\]
as desired. 
\end{proof}
Notice that at light of Theorem \ref{ganchoE}, the $(1,1)$-multiplicity series of $E$ contains all the information about the multiplicities in the sequence of cocharacters  of $E$. In fact:
\[
\dfrac{v}{(1-t)(1-y)}=\sum\limits_{m,n\geq 0}vt^ny^m,
\]
then, developing its homogeneous component of degree $m$, we obtain
\[
\sum_{q=1}^{m}vt^{q-1}y^{m-q}
\]
which gives the exact multiplicities of the $m$-th cocharacter of $E$ as in Theorem  \ref{teo KRG}.

By Theorem \ref{ganchoUE}, we have  $\chi(UT_2(E))\subseteq H(2,3)$. Hence we would like to compute the $(2,3)$-multiplicity series of $UT_2(E)$. We need the following  technical lemma.

\begin{lemma}\label{lemaG}
Consider the hook $H(2,3)$ and the set of variables $\{v_1, v_2,t_1,t_2,y_1,y_2,y_3\}$. Then:

\begin{eqnarray*}
 \text{\rm (i) }   G(1,\mathrm{V}_2,\mathrm{T}_2,\mathrm{Y}_3)= 1+ v_1+v_1^2+\dfrac{v_1v_2}{1-y_1}+ \dfrac{v_1^2v_2}{1-y_1} + \dfrac{v_1^3}{1-t_1}+ \dfrac{v_1^3v_2}{(1-t_1)(1-y_1)}; \qquad \qquad \qquad
\end{eqnarray*}
\begin{eqnarray*}
\text{\rm (ii) } 
G^2(1) & = & 1+ 2v_1+ 3v_1^2+ v_1v_2\left(\dfrac{2}{1-y_1} + \dfrac{1}{(1-y_1)^2}\right)\\
& &+ v_1^2v_2\left(\dfrac{4}{1-y_1}+ \dfrac{1+y_1}{(1-y_1)^2} +\dfrac{1}{(1-y_1)^2} \right) \\
& & + v_1^3\left( \dfrac{3}{1-t_1}+ \dfrac{1}{(1-t_1)^2}\right) +  v_1^3v_2\left(\dfrac{5}{(1-t_1)(1-y_1)}\right.\\
& & \left. + \dfrac{2(1+y_1)}{(1-t_1)(1-y_1)^2}+ \dfrac{1+t_1}{(1-t_1)^2(1-y_1)}+
\dfrac{1+t_1y_1}{(1-t_1)^2(1-y_1)^2}\right) \\
& & +v_1^2v_2^2\left(\dfrac{2}{1-y_1}+ \dfrac{2(1+y_1)}{(1-y_1)^2} + \dfrac{4y_1y_2}{(1-y_1)^2(1-y_1y_2)}\right)\\
& & + v_1^3v_2^2\left(\dfrac{3}{(1-t_1)(1-y_1)}+\dfrac{3(1+y_1)}{(1-t_1)(1-y_1)^2}\right. \\
& & + \dfrac{6y_1y_2}{(1-t_1)(1-y_1)^2(1-y_1y_2)}+ \dfrac{1+t_1}{(1-t_1)^2(1-y_1)}  \\
&  & +\left.   \dfrac{(1+t_1)(1+y_1)}{(1-t_1)^2(1-y_1)^2} +\dfrac{2(1+t_1)y_1y_2}{(1-t_1)^2(1-y_1)^2(1-y_1y_2)}\right)\\
& & + v_1^3v_2^3\left(\dfrac{1}{(1-t_1)(1-y_1)}+\dfrac{1+y_1}{(1-y_1)^2(1+t_1)} \right. \\
& & + \dfrac{2y_1y_2}{(1-t_1)(1-y_1)^2(1-y_1y_2)}+ \dfrac{1+t_1+t_1t_2-t_1^2t_2}{(1-t_1)^2(1-t_1t_2)(1-y_1)} \\
& & +\left.\dfrac{(1+t_1+t_1t_2-t_1^2t_2)(1+y_1)}{(1-t_1)^2(1-t_1t_2)(1-y_1)^2} + \dfrac{2(1+t_1+t_1t_2-t_1^2t_2)y_1y_2}{(1-t_1)^2(1-t_1t_2)(1-y_1)^2(1-y_1y_2)}\right);
\end{eqnarray*}
\begin{eqnarray*}
\text{\rm (iii) }
 G^2(v_1)& = & v_1+2v_1^2 +v_1v_2\left(\dfrac{1}{1-y_1}+ \dfrac{1}{(1-y_1)^2}\right)\\
& &+v_1^2v_2\left(\dfrac{3}{1-y_1}+\dfrac{1+y_1}{(1-y_1)^2}+\dfrac{2}{(1-y_1)^2}\right) \\
& & + v_1^3v_2\left(\dfrac{4}{(1-t_1)(1-y_1)}+ \dfrac{3(1+y_1)}{(1-t_1)(1-y_1)^2}+ \dfrac{2(1+t_1y_1)}{(1-t_1)^2(1-y_1)^2} \right.\\
& &+\left. \dfrac{1+t_1}{(1-t_1)^2(1-y_1)}\right) + v_1^3\left(\dfrac{2}{1-t_1}+\dfrac{1}{(1-t_1^2)}\right)\\
& & + v_1^2v_2^2\left(\dfrac{2}{1-y_1}+\dfrac{3(1+y_1)}{(1-y_1)^2}+\dfrac{6y_1y_2}{(1-y_1)^2(1-y_1y_2)}+ \dfrac{(1+y_1y_2)}{(1-y_1)^2(1-y_1y_2)}\right)\\
& &  + v_1^3v_2^2\left( \dfrac{3}{(1-t_1)(1-y_1)}+\dfrac{5(1+y_1)}{(1-t_1)(1-y_1)^2}+ \dfrac{10y_1y_2}{(1-t_1)(1-y_1)^2(1-y_1y_2)}\right.\\
& &+ \dfrac{2(1+t_1)(1+y_1)}{(1-t_1)^2(1-y_1)^2}+\dfrac{4(1+t_1)y_1y_2}{(1-t_1)^2(1-y_1)^2(1-y_1y_2)}\\
& & \left. +\dfrac{1+y_1y_2+2t_1y_1y_2+t_1y_1-t_1y_1^2y_2}{(1-t_1)^2(1-y_1)^2(1-y_1y_2)} +\dfrac{1+3y_1y_2-y_1^2y_2+y_1}{(1-t_1)(1-y_1)^2(1-y_1y_2)}\right)\\\\
& & +v_1^3v_2^3\left( \dfrac{1}{(1-t_1)(1-y_1)}+ \dfrac{2(1+y_1)}{(1-t_1)(1-y_1)^2}\right.\\
& &+\dfrac{4y_1y_2}{(1-t_1)(1-y_1)^2(1-y_1y_2)}+\dfrac{(1+t_1+t_1t_2-t_1^2t_2)}{(1-t_1)^2(1-t_1t_2)(1-y_1)}\\
& &+\dfrac{2(1+t_1+t_1t_2-t_1^2t_2)(1+y_1)}{(1-t_1)^2(1-t_1t_2)(1-y_1)^2}+ \dfrac{4(1+t_1+t_1t_2-t_1^2t_2)y_1y_2}{(1-t_1)^2(1-t_1t_2)(1-y_1)^2(1-y_1y_2)} \\
& &+\dfrac{(1+t_1+t_1t_2-t_1^2t_2)(1-y_1^2y_2+3y_1y_2+y_1+2y_1y_2y_3)}{(1-t_1)^2(1-t_1t_2)(1-y_1)^2(1-y_1y_2)} 
\end{eqnarray*}
\begin{eqnarray*}
 +\left. \dfrac{(1-y_1^2y_2+3y_1y_2+y_1+2y_1y_2y_3)}{(1-t_1)(1-y_1)^2(1-y_1y_2)} \right). \qquad\qquad\qquad\qquad\qquad
\end{eqnarray*}

\end{lemma}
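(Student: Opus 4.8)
The plan is to compute all three quantities by repeatedly applying the algorithm furnished by Theorems \ref{G1des} and \ref{G2des}, using the factorization $G=\tfrac12(\mathbb 1+G_2\circ G_1)$. First I would record that, in the hook $H(2,3)$, the inputs are themselves $(k,l)$-multiplicity series of single hook Schur functions: $1=\widehat M(HS_{(0)})$ (so $\lambda_0=\mu=\nu=\emptyset$) and $v_1=\widehat M(HS_{(1)})$ (so $\lambda_0=(1)$, $\mu=\nu=\emptyset$). Since in every relevant case $\lambda_0\neq(l^k)=(3,3)$, each application of $G_1$ falls under case (ii) of Theorem \ref{G1des}: I sum over the admissible $\beta_0\subseteq(3,3)$ obtained from $\lambda_0$ by the Young rule (Case 1), and for each such $\beta_0$ evaluate $Y\bigl(M(S_\mu;\mathrm T_{r_{\beta_0}})\bigr)$ and $\overline Y\bigl(M(S_\nu;\mathrm Y_{c})\bigr)$, where $r_{\beta_0}$ and $c$ are the numbers of full rows and full columns prescribed there. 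Dually, each application of $G_2$ uses Corollary \ref{G2des}.

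Concretely, to obtain (i) I would first compute $G_1(1)$: starting from $\lambda_0=\emptyset$, the Young rule produces $\beta_0\in\{\emptyset,(1),(2),(3)\}$ inside the rectangle, each contributing a term of the form $\mathrm V_2^{\beta_0}\,Y(M(\cdot);\mathrm T_{r_{\beta_0}})\,\overline Y(M(\cdot);\mathrm Y_{c})$; the operators $Y$ and $\overline Y$ are then made explicit through Proposition \ref{Youngoperatormult} and the analogous conjugate rule for $\overline Y$, with Corollary \ref{Md2} handling the two-variable reductions. Applying $G_2$ to the resulting series via Corollary \ref{G2des} and averaging with the identity yields $G(1)$, which is part (i). For (ii) and (iii) I would simply iterate, feeding $G(1)$ and $G(v_1)$ back through the same $G_1$--$G_2$ machinery; here $\lambda_0$ can migrate to higher cells of the $2\times3$ rectangle (more rows of length $3$, more columns of length $2$), so the number of active $t$- and $y$-variables on which $Y$ and $\overline Y$ act changes from term to term and must be read off correctly at each stage.

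The main obstacle is organizational rather than conceptual: because $G$ roughly doubles the number of distinct monomials $\mathrm V_2^{\lambda_0}$ at each step, computing $G^2$ forces one to treat every position of $\lambda_0$ in the rectangle separately and, for each, to apply the right specialization of the $Y$- and $\overline Y$-formulas in the correct number of variables. Keeping this branching consistent---in particular correctly extracting $r_{\beta_0}$ and $c$ (respectively $r$ and $c_{\beta_0}$ for $G_2$) at each node and collecting the rational-function coefficients of each $\mathrm V_2^{\lambda_0}$---is where essentially all of the labor, and the risk of computational error, lies. The final closed forms are then recovered by grouping the contributions according to their leading monomial in $v_1,v_2$ and simplifying each coefficient into the displayed rational expressions.
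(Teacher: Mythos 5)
Your proposal is correct and follows essentially the same route as the paper: part (i) directly from the definition of $G$, and parts (ii), (iii) by linearity, feeding each summand of $G(1)$ and $G(v_1)$ back through $G=\tfrac12(\mathbb{1}+G_2\circ G_1)$ via Theorem \ref{G1des} and Corollary \ref{G2des}, tracking the possible $\beta_0\subseteq(3,3)$ and the associated numbers of active variables at each step. Like the paper, you leave the bulk of the rational-function bookkeeping implicit, which is the only place the two accounts could diverge.
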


\begin{proof}
(i) The result follows directly from the definition of $G$.

(ii) By the previous item and the linearity of $G$, we have
\begin{equation}\label{G21}
\begin{split}
G^2(1)= & G(1)+ G(v_1)+G(v_1^2)+G\left(\dfrac{v_1v_2}{1-y_1}\right)+ G\left(\dfrac{v_1^2v_2}{1-y_1}\right) \\
&+ G\left(\dfrac{v_1^3}{1-t_1}\right)+ G\left(\dfrac{v_1^3v_2}{(1-t_1)(1-y_1)}\right).
\end{split}
\end{equation}
We are going to compute each part of the right hand side of the equality above separately.\\
\indent
Let us calculate $G(v_1)$. We start computing $G_1(v_1)$. From Theorem \ref{G1des}, we get
\begin{equation*}
\begin{split}
G_1(v_1)= & v_1 +v_1^2+v_1v_2+v_1^2v_2+ \dfrac{v_1^3v_2}{1-t_1}+ \dfrac{v_1^3}{1-t_1}.
\end{split}
\end{equation*}
It follows that
\begin{equation*}
\begin{split}
G_2(G_1(v_1))= & G_2(v_1) +G_2(v_1^2)+G(v_1v_2)+G_2(v_1^2v_2)+ G_2\left(\dfrac{v_1^3v_2}{1-t_1}\right)+ G_2\left(\dfrac{v_1^3}{1-t_1}\right).
\end{split}
\end{equation*}
By Corollary \ref{G2des}, we have
\begin{equation*}
\begin{split}
G_2(G_1(v_1))= & v_1 + 2v_1^2 + \dfrac{2v_1v_2}{1-y_1}+ \dfrac{2v_1^3}{1-t_1}+ \dfrac{4v_1^2v_2}{1-y_1}+ \dfrac{4v_1^3v_2}{(1-t_1)(1-y_1)}\\
&  +\dfrac{2v_1^2v_2^2}{1-y_1}+
  \dfrac{2v_1^3v_2^2}{(1-t_1)(1-y_1)}.
\end{split}
\end{equation*}
Hence
\begin{equation}\label{Gv1}
\begin{split}
G(v_1)= & v_1 + v_1^2 + \dfrac{v_1v_2}{1-y_1}+ \dfrac{2v_1^2v_2}{1-y_1}+\dfrac{v_1^3}{1-t_1}\\
& +\dfrac{2v_1^3v_2}{(1-t_1)(1-y_1)}+ \dfrac{v_1^2v_2^2}{1-y_1}+
  \dfrac{v_1^3v_2^2}{(1-t_1)(1-y_1)}.
\end{split}
\end{equation}\\

To calculate the remaining parts, we shall strongly use Theorem \ref{G1des} and Corollary \ref{G2des}.
As the computations are too many, we shall only write the final results, as we can see below.
\begin{equation}
\begin{split}
G(v_1^2)=& v_1^2 + \dfrac{v_1^2v_2}{1-y_1}+\dfrac{v_1^3}{1-t_1}+
\dfrac{2v_1^3v_2}{(1-t_1)(1-y_1)}+ \dfrac{v_1^2v_2^2}{1-y_1}\\
  & +  \dfrac{2v_1^3v_2^2}{(1-t_1)(1-y_1)}+\dfrac{v_1^3v_2^3}{(1-t_1)(1-y_1)};
\end{split}
\end{equation}
\vspace{0.2cm}
\begin{equation*}
\begin{split}
G\left(\dfrac{v_1v_2}{1-y_1}\right)= &  \dfrac{v_1v_2}{(1-y_1)^2}+ v_1^2v_2\left(\dfrac{1+y_1}{(1-y_1)^2}\right)+ v_1^2v_2^2\left(\dfrac{1+y_1}{(1-y_1)^2}+ \dfrac{2y_1y_2}{(1-y_1)^2(1-y_1y_2)}\right)\\
 & +v_1^3v_2\left(\dfrac{1+y_1}{(1-t_1)(1-y_1)^2}\right)+
v_1^3v_2^2 \left(\dfrac{1+y_1}{(1-t_1)(1-y_1)^2}+\dfrac{2y_1y_2}{(1-y_1)^2(1-y_1y_2)}\right);
\end{split}
\end{equation*}
\vspace{0.2cm}
\begin{equation*}
\begin{split}
G\left(\dfrac{v_1^3}{1-t_1}\right)= &  \dfrac{v_1^3}{(1-t_1)^2}+
  v_1^3v_2\left(\dfrac{1+t_1}{(1-t_1)^2(1-y_1)}\right)+
v_1^3v_2^2 \left(\dfrac{1+t_1}{(1-t_1)^2(1-y_1)}\right)\\
& +v_1^3v_2^3\left(\dfrac{1+t_1+t_1t_2-t_1^2t_2}{(1-t_1)^2(1-t_1t_2)(1-y_1)}\right);
\end{split}
\end{equation*}
\vspace{0.1 cm}
\begin{equation*}
\begin{split}
G\left(\dfrac{v_1^2v_2}{1-y_1}\right)= &  \dfrac{v_1^2v_2}{(1-y_1)^2}+
  v_1^3v_2\left(\dfrac{1+y_1}{(1-t_1)(1-y_1)^2}\right)\\
 & +v_1^2v_2^2\left(\dfrac{1+y_1}{(1-y_1)^2} +\dfrac{2y_1y_2}{(1-y_1)^2(1-y_1y_2)}\right)\\
 &+ v_1^3v_2^2 \left(\dfrac{2(1+y_1)}{(1-t_1)(1-y_1)^2}  + \dfrac{4y_1y_2}{(1-t_1)(1-y_1)^2(1-y_1y_2)}\right)\\
 &+ v_1^3v_2^3 \left(\dfrac{(1+y_1)}{(1-t_1)(1-y_1)^2}  +\dfrac{2y_1y_2}{(1-t_1)(1-y_1)^2(1-y_1y_2)}\right);
\end{split}
\end{equation*}
\begin{equation*}
\begin{split}
G\left(\dfrac{v_1^3v_2}{(1-t_1)(1-y_1)}\right)= &
  v_1^3v_2\left(\dfrac{1+t_1y_1}{(1-t_1)^2(1-y_1)^2}\right)\\
&+ v_1^3v_2^2 \left(\dfrac{(1+t_1)(1+y_1)}{(1-t_1)^2(1-y_1)^2}
+ \dfrac{2(1+t_1)y_1y_2}{(1-t_1)^2(1-y_1)^2(1-y_1y_2)}\right)\\
& +v_1^3v_2^3 \left(\dfrac{(1+t_1+t_1t_2-t_1^2t_2)(1+y_1)}{(1-t_1)^2(1-t_1t_2)(1-y_1)^2}
+\dfrac{2(1+t_1+t_1t_2-t_1^2t_2)y_1y_2}{(1-t_1)^2(1-t_1t_2)(1-y_1)^2(1-y_1y_2)}\right).
\end{split}
\end{equation*}
By the equality (\ref{G21}) and the previous computations, we obtain the desired the result.

(iii) From the equality (\ref{Gv1}), we have
\begin{equation}\label{G2v1}
\begin{split}
G^2(v_1)= & G(v_1) + G(v_1^2) + G\left(\dfrac{v_1v_2}{1-y_1}\right)+ 2G\left(\dfrac{v_1^2v_2}{1-y_1}\right)+G\left(\dfrac{v_1^3}{1-t_1}\right)\\
& +2G\left(\dfrac{v_1^3v_2}{(1-t_1)(1-y_1)}\right)+ G\left(\dfrac{v_1^2v_2^2}{1-y_1}\right)+ G\left(  \dfrac{v_1^3v_2^2}{(1-t_1)(1-y_1)}\right).
\end{split}
\end{equation}
It only remains to calculate $G\left(\dfrac{v_1^2v_2^2}{1-y_1}\right)$ and  $G\left(\dfrac{v_1^3v_2^2}{(1-t_1)(1-y_1)}\right)$, since the other summands have been calculated in item (ii).

The following equalities are obtained by Theorem \ref{G1des} and Corollary \ref{G2des}.
\begin{equation*}
\begin{split}
G\left(\dfrac{v_1^2v_2^2}{1-y_1}\right)= & v_1^2v_2^2\left(\dfrac{1+y_1y_2}{(1-y_1)^2(1-y_1y_2)}\right)+ v_1^3v_2^2\left(\dfrac{1+3y_1y_2-y_1^2y_2+y_1}{(1-y_1)^2(1-y_1y_2)(1-t_1)}\right)\\
& +v_1^3v_2^3\left(\dfrac{1+3y_1y_2-y_1^2y_2+y_1+2y_1y_2y_3}{(1-y_1)^2(1-y_1y_2)(1-t_1)}\right);
\end{split}
\end{equation*}
\begin{equation*}
\begin{split}
G\left(\dfrac{v_1^3v_2^2}{(1-t_1)(1-y_1)}\right)= & v_1^3v_2^2\left(\dfrac{1+y_1y_2+2t_1y_1y_2+t_1y_1-t_1y_1^2y_2}{(1-y_1)^2(1-y_1y_2)(1-t_1)^2}\right)\\
&+ v_1^3v_2^3\left(\dfrac{(1+t_1+t_1t_2-t_1^2t_2)(1+3y_1y_2-y_1^2y_2+y_1+2y_1y_2y_3)}{(1-t_1)^2(1-t_1t_2)(1-y_1)^2(1-y_1y_2)}\right).
\end{split}
\end{equation*}
Now the result follows by a combination of the equality (\ref{G2v1}) and the previous computations.
\end{proof}

\begin{theorem}\label{SMUT2}
The $(2,3)$-multiplicity series of $UT_2(E)$ is
\begin{equation*}
\begin{split}
\widehat{M}(UT_2(E);\mathrm{V}_2, \mathrm{T}_2,\mathrm{Y}_3) = &  1+v_1+v_1^2+\dfrac{v_1v_2}{1-y_1}+ \dfrac{v_1^3}{1-t_1}+\dfrac{ v_1^2v_2(2-y_1)}{(1-y_1)^2}\\
 & +v_1^3v_2 \left(\dfrac{1}{(1-t_1)(1-y_1)}+ \dfrac{1+y_1}{(1-t_1)(1-y_1)^2}+\dfrac{1+t_1y_1}{(1-t_1)^2(1-y_1)^2}\right)\\
 & +v_1^2v_2^2\left(\dfrac{2+y_1}{(1-y_1)^2}+ \dfrac{4y_1y_2}{(1-y_1)^2(1-y_1y_2)}\right)\\
 &+v_1^3v_2^2\left(  \dfrac{2(1+y_1)}{(1-t_1)(1-y_1)^2}+ \dfrac{4y_1y_2}{(1-t_1)(1-y_1)^2(1-y_1y_2)}\right.\\
 & + \dfrac{(1+t_1)(1+y_1)}{(1-t_1)^2(1-y_1)^2} +  \dfrac{2(1+t_1)y_1y_2}{(1-t_1)^2(1-y_1)^2(1-y_1y_2)}\\
 &+\left. \dfrac{1+y_1y_2+2t_1y_1y_2+t_1y_1-t_1y_1^2y_2}{(1-t_1)^2(1-y_1)^2(1-y_1y_2)} + \dfrac{1+3y_1y_2-y_1^2y_2+y_1}{(1-t_1)(1-y_1)^2(1-y_1y_2)}\right)\\
 &+v_1^3v_2^3\left(\dfrac{(1+y_1)}{(1-t_1)(1-y_1)^2} +  \dfrac{2y_1y_2}{(1-t_1)(1-y_1)^2(1-y_1y_2)}\right.\\
&+\dfrac{(1+t_1+t_1t_2-t_1^2t_2)(1+y_1)}{(1-t_1)^2(1-t_1t_2)(1-y_1)^2}+\dfrac{2(1+t_1+t_1t_2-t_1^2t_2)y_1y_2}{(1-t_1)^2(1-t_1t_2)(1-y_1)^2(1-y_1y_2)} \\
 & +\dfrac{(1+t_1+t_1t_2-t_1^2t_2)(1-y_1^2y_2+3y_1y_2+y_1+2y_1y_2y_3)}{(1-t_1)^2(1-t_1t_2)(1-y_1)^2(1-y_1y_2)} \\
& +\left. \dfrac{(1-y_1^2y_2+3y_1y_2+y_1+2y_1y_2y_3)}{(1-t_1)(1-y_1)^2(1-y_1y_2)} \right).
\end{split}
\end{equation*}
\end{theorem}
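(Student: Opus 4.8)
The plan is to specialize the general formula of Theorem \ref{hookmultUTE} to the case $n=2$, $k=2$, $l=3$, and then to substitute the three explicit series computed in Lemma \ref{lemaG}. Writing out the triple sum
\[
\widehat{M}(UT_2(E);\mathrm{V}_2,\mathrm{T}_2,\mathrm{Y}_3)=\sum_{j=1}^2\sum_{q=0}^{j-1}\sum_{\lambda\vdash q}(-1)^{j-q-1}\binom{2}{j}\binom{j-1}{q}d_{\lambda}G^j(\mathrm{V}_2^{\lambda_0}\mathrm{T}_2^{\mu}\mathrm{Y}_3^{\nu}),
\]
I first observe that for $j=1$ only the index $q=0$ occurs, with $\lambda=\emptyset$ and $d_{\emptyset}=1$, producing the summand $2\,G(1)$. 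For $j=2$ the inner sum has two terms: $q=0$ gives $-G^2(1)$, while $q=1$ forces $\lambda=(1)$, and since the single box lies inside the rectangle $(3^2)$ we get $\lambda_0=(1)$, $\mu=\nu=\emptyset$, hence $\mathrm{V}_2^{\lambda_0}\mathrm{T}_2^{\mu}\mathrm{Y}_3^{\nu}=v_1$ and the summand $G^2(v_1)$. Therefore
\[
\widehat{M}(UT_2(E);\mathrm{V}_2,\mathrm{T}_2,\mathrm{Y}_3)=2\,G(1)-G^2(1)+G^2(v_1),
\]
which is the exact hook analog of the identity $M(UT_2(E);\mathrm{T}_2)=2Z(1)-Z^2(1)+Z^2(t_1)$ used in the proof of Proposition \ref{M2EUTE}, with the operator $Z=Y\circ\widehat{Y}$ replaced by $G=\tfrac{1}{2}(\mathbb{1}+G_2\circ G_1)$.

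It then remains to insert the closed forms for $G(1)$, $G^2(1)$ and $G^2(v_1)$ furnished by parts (i), (ii) and (iii) of Lemma \ref{lemaG} and to collect terms. Since all three series are already displayed grouped by the monomial prefix $v_1^{a}v_2^{b}$ with $0\le a\le 3$ and $0\le b\le 3$, the linear combination $2\,G(1)-G^2(1)+G^2(v_1)$ may be formed coefficient-by-coefficient in these prefixes: for each fixed prefix the three contributions are rational functions in $t_1,t_2,y_1,y_2,y_3$, and one checks that their alternating sum equals the coefficient recorded in the statement. Working prefix by prefix keeps every cancellation local and is what makes the verification tractable.

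The only genuine obstacle is the sheer volume of rational-function algebra: $G^2(1)$ and $G^2(v_1)$ each spread over about ten prefixes with multi-term numerators, so the verification is long even though no single step is deep. As a representative sample, for the prefix $v_1^2v_2$ the three coefficients are $\tfrac{2}{1-y_1}$ (from $2G(1)$), $\tfrac{4}{1-y_1}+\tfrac{2+y_1}{(1-y_1)^2}$ (from $G^2(1)$) and $\tfrac{3}{1-y_1}+\tfrac{3+y_1}{(1-y_1)^2}$ (from $G^2(v_1)$), whose alternating combination collapses to $\tfrac{1}{1-y_1}+\tfrac{1}{(1-y_1)^2}=\tfrac{2-y_1}{(1-y_1)^2}$, exactly the coefficient of $v_1^2v_2$ in the stated series; a similar two-line cancellation handles the $v_1v_2$ prefix, where the combination reduces to $\tfrac{1}{1-y_1}$. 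Carrying out the analogous cancellation for each of the remaining prefixes $1,v_1,v_1^2,v_1^3,v_1^3v_2,v_1^2v_2^2,v_1^3v_2^2,v_1^3v_2^3$ yields the claimed expression for $\widehat{M}(UT_2(E);\mathrm{V}_2,\mathrm{T}_2,\mathrm{Y}_3)$.
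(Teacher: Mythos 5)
Your derivation of the identity $\widehat{M}(UT_2(E);\mathrm{V}_2,\mathrm{T}_2,\mathrm{Y}_3)=2G(1)-G^2(1)+G^2(v_1)$ from Theorem \ref{hookmultUTE}, followed by substitution of the three series from Lemma \ref{lemaG} and prefix-by-prefix collection of terms, is exactly the paper's argument (the paper also invokes Proposition \ref{ganchoUE} only to justify working in the hook $H(2,3)$). Your sample verification for the $v_1^2v_2$ prefix checks out, so the proposal is correct and essentially identical to the published proof.
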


\begin{proof}
Proposition \ref{ganchoUE} implies that $\chi(UT_2(E))\subseteq H(2,3)$. Hence we can work with the set of variables $\{v_1, v_2,t_1,t_2,y_1,y_2,y_3\}$.
By Theorem \ref{hookmultUTE} we obtain
\begin{equation}\label{MUT2}
\widehat{M}(UT_2(E);\mathrm{V}_2, \mathrm{T}_2,\mathrm{Y}_3)=2G(1)-G^2(1)+G^2(v_1).
\end{equation}
Now the result follows from the equality (\ref{MUT2}) and Lemma \ref{lemaG}.
\end{proof}

The next result was proved by Centrone in \cite{C}. Now, we are going to prove it using the $(2,3)$-multiplicity series of $UT_2(E)$.

\begin{corollary}
Let $\lambda$ be a partition. The multiplicity $m_{\lambda}$ in the cocharacter sequence of $UT_2(E)$ is given by the following expressions:

{\small
$m_{\lambda}= \left\{ \begin{array}{lcl}
             1 &   if  & \lambda=(n),  \\
             1 & if & \lambda=(1^m),\ m>1,\\
             m+1 & if & \lambda=(2,1^m),\ m\geq 1,\\
             3m+2 & if & \lambda=(2,2,1^m),\ m\geq 0,\\
             4(m+1) & if & \lambda=(2,2,2^s,1^m),\ m\geq 0,\ s>0,\\
             2nm-3m-n+3 & if & \lambda=(n,1^m),\ n\geq 3,\ m\geq 1,\\
             6m(n-3)+9m+3(n-3)+5 & if & \lambda=(n,2,1^m),\ n\geq3, \ m\geq 0,\\
             (8(n-3)+12)(m+1) & if & \lambda=(n,2,2^s,1^m),\ n\geq 3,\ s\geq 1,\ m\geq 0,\\
             4(n_1-n_2+1)(2m+1) & if & \lambda=(n_1,n_2,1^m),\ n_1\geq n_2\geq 3,\ m\geq 0,\\
             12(n_1-n_2+ 1)(m+1) & if & \lambda=(n_1,n_2,2^s,1^m),\ n_1\geq n_2\geq 3,\ s\geq 1,\ m\geq 0,\\
             4(n_1-n_2+1)(m+1) & if & \lambda=(n_1,n_2,3,2^s,1^m),\ n_1\geq n_2\geq 3,\ s\geq 0,\ m\geq 0,\\
             0 & \textrm{for}& \textrm{all other}  \ \lambda.
             \end{array}
   \right.$
}
\end{corollary}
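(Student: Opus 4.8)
The strategy is to read off every multiplicity directly from the explicit $(2,3)$-multiplicity series produced in Theorem \ref{SMUT2}. By Proposition \ref{ganchoUE} we have $\chi(UT_2(E))\subseteq H(2,3)$, so that series encodes \emph{all} the nonzero multiplicities of the cocharacter sequence. I would first recall the dictionary between partitions and monomials (Figure \ref{defuv}): a partition $\lambda\in H(2,3)$ splits into $(\lambda_0,\mu,\nu)$, where $\lambda_0=\lambda\cap(3^2)$, $\mu_i=\max(\lambda_i-3,0)$ for $i=1,2$, and $\nu$ is the conjugate of $(\lambda_3,\lambda_4,\ldots)$; then $m_\lambda$ is precisely the coefficient of $v_1^{(\lambda_0)_1}v_2^{(\lambda_0)_2}t_1^{\mu_1}t_2^{\mu_2}y_1^{\nu_1}y_2^{\nu_2}y_3^{\nu_3}$ in $\widehat{M}(UT_2(E);\mathrm{V}_2,\mathrm{T}_2,\mathrm{Y}_3)$.

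The proof then proceeds case by case through the families listed in the statement. For each family I would compute the triple $(\lambda_0,\mu,\nu)$, hence the shape of the target monomial, and note that the series of Theorem \ref{SMUT2} is already graded by the pair of $(v_1,v_2)$-exponents, i.e.\ by $\lambda_0$; thus only the block carrying the matching power $v_1^av_2^b$ can contribute. Within that block I would expand the remaining rational factors in $t_1,t_2,y_1,y_2,y_3$ by means of the standard identities $\tfrac{1}{(1-u)^r}=\sum_{j\ge0}\binom{j+r-1}{r-1}u^j$ and extract the coefficient. For instance, the column case $\lambda=(1^m)$ corresponds to $\lambda_0=(1,1)$, $\mu=(0)$, $\nu=(m-2)$, so $m_\lambda$ is the coefficient of $y_1^{m-2}$ in $\tfrac{1}{1-y_1}$, namely $1$; and $\lambda=(2,1^m)$ corresponds to $v_1^2v_2\,y_1^{m-1}$, whose coefficient in $\tfrac{v_1^2v_2(2-y_1)}{(1-y_1)^2}$ equals $(m-1)+2=m+1$, matching the claim.

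The distinctions between the finer families are governed by the $y_2$- and $y_3$-exponents of $\nu$: a value $\nu_2\ge1$ signals a string of length-$2$ rows below $\lambda_0$ (the exponent $s$), while $\nu_3=1$ signals a single length-$3$ row in the third position. Since $\lambda\in H(2,3)$ forces every row below $\lambda_0$ to have at most three boxes, only $y_3^0$ and $y_3^1$ occur, in agreement with Proposition \ref{propQ2}; this is why the enumeration ends with the family $(n_1,n_2,3,2^s,1^m)$ and no family with two length-$3$ rows below the rectangle appears. The final clause $m_\lambda=0$ for all other $\lambda$ then follows because every monomial actually occurring in the series of Theorem \ref{SMUT2} is of one of the enumerated shapes.

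The routine part is the geometric-series expansion; the genuine obstacle is the multivariate bookkeeping in the heavy $v_1^3v_2^2$ and $v_1^3v_2^3$ blocks, where several rational summands feed the same monomial at once. There one must correctly combine the binomial coefficients coming from the various factors $\tfrac{1}{(1-t_1)^r}$ and $\tfrac{1}{(1-y_1)^r}$ together with the ``diagonal'' denominators $\tfrac{1}{1-t_1t_2}$ and $\tfrac{1}{1-y_1y_2}$, which implement the shifts recorded by $\mu_2$ and $\nu_2$. Separating the contributions cleanly according to the $y_2$- and $y_3$-exponents, and then summing the resulting binomial expressions into the closed forms displayed in the statement, is where all the care is required, even though each individual step is elementary.
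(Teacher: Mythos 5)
Your proposal is correct and follows essentially the same route as the paper: both read the multiplicities off the explicit $(2,3)$-multiplicity series of Theorem \ref{SMUT2}, using Proposition \ref{ganchoUE} to confine $\lambda$ to $H(2,3)$, matching each family to its triple $(\lambda_0,\mu,\nu)$ and hence to a $v_1^av_2^b$-block, and expanding the rational summands as power series to extract coefficients. Your worked examples (e.g.\ the coefficient $m+1$ for $(2,1^m)$ from $\tfrac{v_1^2v_2(2-y_1)}{(1-y_1)^2}$) agree with the paper's computations, and like the paper you leave the heavier $v_1^3v_2^2$ and $v_1^3v_2^3$ blocks as routine but careful bookkeeping.
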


\begin{proof}
Given a partition $\lambda$, by Theorem \ref{ganchoUE} we know that $m_{\lambda}=0$ if $\lambda\notin H(2,3)$. Hence let $\lambda\in H(2,3)$. In order to compute
the multiplicity $m_{\lambda}$, it is necessary to write the hook multiplicity  series of $UT_2(E)$ as a power series.

Let $\lambda\in H(2,3)$ and consider the triple $(\lambda_0,\mu,\nu)$. Notice that $D_{\lambda_0}\subseteq D_{(3,3)}$. It follows that
\[
\lambda_0\in \{(1),(2),(3),(1,1),(2,1),(3,1),(2,2),(3,2),(3,3)\}.
\]
First, let $\lambda$ be a partition such that $\lambda_0\in\{(1),(2),(3)\}$. Using Theorem \ref{SMUT2}, we obtain $m_{\lambda}=1$.

Consider now $\lambda$ such that $\lambda_0=(1,1)$. By Theorem \ref{SMUT2}, we have that $\lambda_0$  corresponds to the summand
\[
\dfrac{v_1v_2}{1-y_1}=v_1v_2\sum\limits_{m\geq 0} y_1^m.
\]
It follows that the partitions of type $\lambda=(1,1,1^m)$ with $m\geq 0$ have multiplicity $1$ or, equivalently, if $\lambda=(1^m)$, with $m> 1$ then $m_{\lambda}=1$.

Now, let $\lambda$ be such that $\lambda_0=(2,1)$. Observe that
\begin{equation*}
\dfrac{ v_1^2v_2(2-y_1)}{(1-y_1)^2}=v_1^2v_2\left(2 + \sum\limits_{n\geq 1}(m+2)y_1^m\right).
\end{equation*}
Hence, if $\lambda=(2,1)$, then $m_{\lambda}=2$. Moreover, if $\lambda=(2,1,1^m)$, with $m\geq 1$, then $m_{\lambda}=m+2$ or, equivalently, if $\lambda=(2,1^m)$, with $m\geq 2$, then $m_{\lambda}=m+1$.

Let $\lambda$ be a partition such that $\lambda_0=(2,2)$. Then
\begin{equation*}
 v_1^2v_2^2\left(\dfrac{2+y_1}{(1-y_1)^2}+ \dfrac{4y_1y_2}{(1-y_1)^2(1-y_1y_2)}\right)=v_1^2v^2\left(2+ \sum\limits_{m\geq 1} (3m+2)y_1^m+ \sum\limits_{m\geq 1}\sum\limits_{s\geq 1}4my_1^{m+s-1}y_2^s\right).
\end{equation*}
Hence if $\lambda=(2,2)$, then $m_{\lambda}=2$. If $\lambda=(2,2,1^m)$, then $m_{\lambda}=3m+2$.

Observe that $y_1^{m+s-1}y_2^s$ is in one-to-one correspondence with the partition $\nu=(m+s-1,s)$, hence $\nu'=(2^s,1^{m-1})$.
So, if $\lambda=(2,2,2^s, 1^{m-1})$, with $m,s\geq 1$, then $m_{\lambda}=4m$ or, equivalently, if $\lambda=(2,2,2^s,1^m)$, with $s\geq 1$ and $m\geq 0$, then $m_{\lambda}=4(m+1)$.

The other cases are treated similarly.
\end{proof}
Now, we are going to calculate the multiplicities $m_{\lambda}$ in the cocharacter sequence of $UT_3(E)$ when $\lambda\in H(1,1)$.

\begin{theorem}
Let $\lambda$ be a partition such that
$\lambda\in H(1,1)$. The multiplicity $m_{\lambda}$ in the cocharacter sequence of $UT_3(E)$ is given by the following expressions:

{\small $m_{\lambda}= \left\{ \begin{array}{lcl}
             1 &   if  & \lambda=(n),\ n\geq 0,  \\
             1 & if & \lambda=(1^m),\ m>1,\\
             n & if & \lambda=(n,1), \ n\geq 2,\\
             m+1 & if & \lambda=(2,1^m),\ m\geq 2,\\
           \dfrac{1}{4}(76-90m+26m^2-54n+ 68mn-20m^2n & \\
           + 10n^2-12mn^2+4m^2n^2)&if &\lambda=(n,1^m),\ n\geq 3,m\geq 2.
             \end{array}
   \right.$
}
\end{theorem}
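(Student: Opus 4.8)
The plan is to read off the $(1,1)$-multiplicity series of $UT_3(E)$ from Theorem \ref{hookmultUTE} and extract the coefficient of $v\,t^{n-1}y^m$, which by construction equals $m_{(n,1^m)}(UT_3(E))$: a hook $\lambda=(n,1^m)\in H(1,1)$ has $\lambda_0=(1)$, $\mu=(n-1)$, $\nu=(m)$, so $\mathrm{V}_1^{\lambda_0}\mathrm{T}_1^\mu\mathrm{Y}_1^\nu=v\,t^{n-1}y^m$. Specializing Theorem \ref{hookmultUTE} to $n=3$, $k=l=1$ and evaluating the coefficients $\binom{3}{j}\binom{j-1}{q}d_\lambda$ gives
\[
\widehat M(UT_3(E);v,t,y)=3G(1)-3G^2(1)+3G^2(v)+G^3(1)-2G^3(v)+G^3(vt)+G^3(vy),
\]
which is the exact analogue, with $G$ in place of $Z$, of the expression for $M(UT_3(E);\mathrm{T}_2)$ used in the proof of Theorem \ref{MOUT3}.

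The decisive simplification is that in the hook $H(1,1)$ the rectangle $(1^1)$ is a single box, so every nonempty hook $\lambda$ has $\lambda_0=(1)=(l^k)$ and Theorem \ref{G1des}(i) and Corollary \ref{G2des}(i) apply directly. Using $S_{(a)}(t)=t^a$, the fact that $S_{(1^s)}(y)=0$ for $s\ge 2$ while $S_{(1^0)}(y)+S_{(1^1)}(y)=1+y$, and that $Y$ and $\overline Y$ reduce to multiplication by $\tfrac1{1-t}$ and by $1+y$ in one variable, I would show
\[
G_1(v\,t^ay^b)=\frac{v\,t^ay^b(1+y)}{1-t},\qquad G_2(v\,t^ay^b)=\frac{v\,t^ay^b(1+t)}{1-y}.
\]
Hence on the ``$v$-part'' $v\,\mathbb{C}[[t,y]]$ the composite $G_2\circ G_1$ is multiplication by $\frac{(1+t)(1+y)}{(1-t)(1-y)}$, so $G=\tfrac12(\mathbb{1}+G_2G_1)$ is multiplication by $Q:=\frac{1+ty}{(1-t)(1-y)}=H(E;t,y)$; on the empty partition one computes directly $G(1)=1+\frac{v}{(1-t)(1-y)}$ (recovering the $(1,1)$-series of $E$).

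With this, iteration is immediate:
\[
G^j(1)=1+\sum_{i=1}^{j}\frac{v(1+ty)^{i-1}}{(1-t)^i(1-y)^i},\qquad G^j(vw)=\frac{v\,w(1+ty)^j}{(1-t)^j(1-y)^j}\quad(w\in\{1,t,y\}).
\]
Substituting into the displayed combination and collecting terms yields the closed form
\[
\widehat M(UT_3(E);v,t,y)=1+\frac{v}{(1-t)(1-y)}-\frac{2v(1+ty)}{(1-t)^2(1-y)^2}+\frac{3v(1+ty)^2}{(1-t)^2(1-y)^2}+\frac{v(1+ty)^2}{(1-t)^3(1-y)^3}+\frac{(t+y-2)v(1+ty)^3}{(1-t)^3(1-y)^3}.
\]
It then remains to expand this rational function via $\frac1{(1-t)^p}=\sum_c\binom{c+p-1}{p-1}t^c$, its analogue in $y$, and $(1+ty)^r=\sum_i\binom{r}{i}t^iy^i$, and to read off the coefficient of $v\,t^{n-1}y^m$. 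Setting $y=0$ (resp. $t=0$) collapses the series to $1+\frac{v}{1-t}$ (resp. $1+\frac{v}{1-y}$), giving at once $m_{(n)}=1$ and $m_{(1^m)}=1$; the coefficient of $v\,t^{n-1}y$ simplifies to $1+(n-1)$, giving $m_{(n,1)}=n$.

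The main obstacle is the coefficient extraction in the remaining ranges, where the handling of boundary effects is delicate. Writing $a=n-1$, $b=m$, each summand contributes $\sum_i\binom{r}{i}\binom{a-i+p-1}{p-1}\binom{b-i+q-1}{q-1}$, and the $(t+y-2)D$ term additionally needs the shifted values at $(a-1,b)$ and $(a,b-1)$. These binomial-coefficient \emph{polynomials} agree with the true series coefficients only when every lower index stays $\ge -2$; for all the required shifts this holds precisely when $a\ge 2$ and $b\ge 2$, i.e. $n\ge 3$ and $m\ge 2$, which is exactly the regime of the quartic. There the sum is a genuine polynomial in $(n,m)$ of degree two in each variable, and a direct (if tedious) evaluation produces $\tfrac14(76-90m+26m^2-54n+68mn-20m^2n+10n^2-12mn^2+4m^2n^2)$; I would verify a base value such as $m_{(3,1,1)}=6$ to pin the normalization. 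For $n=2$ (so $a=1$) the shift to $(a-1,b)=(0,b)$ forces a truncated sum that alters the answer for $b\ge 3$ and yields $m_{(2,1^m)}=m+1$, while the cases $n=1$ and $m\le1$ are covered by the $t=0$ and $y=0$ specializations already noted. Assembling these ranges gives the stated piecewise formula.
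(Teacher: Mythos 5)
Your proposal is correct and follows essentially the same route as the paper: both start from Theorem \ref{hookmultUTE}, reduce to the combination $3G(1)-3G^2(1)+3G^2(v)+G^3(1)-2G^3(v)+G^3(vt)+G^3(vy)$, arrive at the identical rational function for $\widehat{M}(UT_3(E);v,t,y)$, and extract coefficients via the same binomial expansions of $(1-t)^{-p}(1-y)^{-q}$. Your observation that $G$ acts on $v\,\mathbb{C}[[t,y]]$ as multiplication by $H(E;t,y)=\tfrac{1+ty}{(1-t)(1-y)}$ is merely a tidier packaging of the paper's term-by-term evaluation of the $G^j$, and your boundary analysis (the polynomial formula valid exactly for $n\geq 3$, $m\geq 2$, with the truncated cases giving $m_{(2,1^m)}=m+1$ and $m_{(n,1)}=n$) matches the paper's case split and checks out numerically (e.g.\ $m_{(3,1,1)}=6$).
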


\begin{proof}
By Theorem \ref{hookmultUTE}, we have
\begin{equation}\label{h11ut3}
\widehat{M}(UT_3(E);t,y,v)= 3G(1)-3G^2(1)+3G^2(v)+ G^3(1)-2G^3(v)+G^3(vt)+ G^3(vy)
\end{equation}
By Theorem \ref{G1des} and Corollary \ref{G1des}, we obtain
\begin{itemize}
\item $G(1)=1+\dfrac{v}{(1-t)(1-y)}$;
\item $G^2(1)= 1+\dfrac{v}{(1-t)(1-y)}+ \dfrac{v(1+ty)}{(1-t)^2(1-y)^2}$;
\item $G^2(v)= \dfrac{v(1+2ty+t^2y^2)}{(1-t)^2(1-y)^2}$;
\item $G^3(1)=1+\dfrac{v}{(1-t)(1-y)}+ \dfrac{v(1+ty)}{(1-t)^2(1-y)^2}+  \dfrac{v(1+2ty+t^2y^2)}{(1-t)^3(1-y)^3}$;
\item $G^3(v)= \dfrac{v(1+3ty+3t^2y^2+ t^3y^3)}{(1-t)^3(1-y)^3}$;
\item $G^3(vt)= \dfrac{v(t+ 3t^2y+ 3t^3y^2+ t^4y^3)}{(1-t)^3(1-y)^3}$;
\item $G^3(vy)= \dfrac{v(y+3ty^2+ 3t^2y^2+ t^3y^4)}{(1-t)^3(1-y)^3}$.
\end{itemize}
By the equation (\ref{h11ut3}) we obtain that the $(1,1)$-multiplicity series of $UT_3(E)$ in the variables $v,t,y$ is
\begin{equation}\label{m11ut3}
\begin{split}
\widehat{M}(UT_3(E);v,t,y) = & 1 + \dfrac{v}{(1-t)(1-y)}+\dfrac{v(1+4ty+ 3t^2y^2)}{(1-t)^2(1-y)^2}\\
&+ \dfrac{v}{(1-t)^3(1-y)^3}\left(-1+t+y-4ty-5t^2y^2-2t^3y^3
  +3t^2y + 3t^3y^2+ t^4y^3+ 3ty^2+ 3t^2y^3+ t^3y^4\right).
\end{split}
\end{equation}
Note that to calculate the multiplicity $m_{\lambda}$ where $\lambda\in H(1,1)$, it is necessary to write (\ref{m11ut3}) as a power series. Recall that
\begin{equation*}
\dfrac{t^{a_1}y^{a_2}}{(1-t)(1-y)}= \sum\limits_{n\geq a_1}\sum\limits_{m\geq a_2}t^ny^m,
\end{equation*}
\begin{equation*}
\dfrac{t^{a_1}y^{a_2}}{(1-t)^2(1-y)^2}= \sum\limits_{n\geq a_1}\sum\limits_{m\geq a_2}(n-a_1+1)(m-a_2+1)t ^{n}y^{m},
\end{equation*}
\begin{equation*}
\dfrac{t^{a_1}y^{a_2}}{(1-t)^3(1-y)^3}=\sum\limits_{n_\geq {a_1}}\sum\limits_{m\geq a_2} {n-a_1+2\choose 2}{m-a_2+2\choose 2}t^{n}y^{m}.
\end{equation*}
Using the previous equations and making some algebraic manipulations, we obtain the following  expression
{\small
\begin{equation}\label{multut311}
\begin{split}
\widehat{M}(UT_3(E);v,t,y)= & 1+ v\left(\sum\limits_{n\geq 0}t^n+ \sum\limits_{m\geq 1} y^m+ \sum\limits_{n\geq 1}(n+1)t^ny + \sum\limits_{m\geq 2}(m+1)ty^m \right.\\
& +\left. \sum\limits_{n\geq 2}\sum\limits_{m\geq 2}\dfrac{(32-34(m+n)+10(n^2+m^2)-12(m^2n+n^2m)+ 44mn+ 4m^2n^2)}{4}t^ny^m\right).
\end{split}
\end{equation}
}
By the equation (\ref{multut311}), it follows that if $\lambda=(n)$ or $\lambda=(1^m)$ then $m_{\lambda}=1$.
Now, if $\lambda=(n+1,1)$ and $n\geq 1$ then $m_{\lambda}= n+1$, which means that if $\lambda=(n,1)$ with $n\geq 2$ then $m_{\lambda}=n$.
Observe that if $\lambda=(2,1^m)$ with $m\geq 2$, its multiplicity is $m+1$.\\
Finally if $\lambda=(n+1,1^m)$ with $n,m\geq 2$, then we have that
\[
m_{\lambda}=\dfrac{32-34(m+n)+10(n^2+m^2)-12(mn^2+n^2m)+ 44mn+ 4m^2n^2}{4},
\]
or equivalently if $\lambda=(n, 1^{m})$ with $n\geq 3$ and $m\geq 2$, we have that
\[
m_{\lambda}=\dfrac{76-90m+26m^2-54n+ 68mn-20m^2n+ 10n^2-12mn^2+4m^2n^2}{4}.
\]
\end{proof}

Recall that if we want to know all multiplicities of cocharacter sequences of $UT_3(E)$,
we have to work with the hook $H(3,5)$ because by Theorem \ref{ganchoUE} we know that $\chi (UT_3(E))\subseteq H(3,5)$.
Hence the $(3,5)$-multiplicity series $\widehat{M}(UT_3(E);\mathrm{V}_3,\mathrm{T}_3,\mathrm{Y}_5)$ has 11 variables and the computations are very technical.


\begin{thebibliography}{99}

\bibitem{Ab} A. Sh. Abakarov, {\it Identities of the algebra of triangular matrices} (Russian),
Zap. Nauchn. Sem. Leningrad. Otdel. Mat. Inst. Steklov (LOMI) {\bf 114} (1982), 7-27, 217.
Translation: J. Sov. Math. {\bf 27}(4) (1984), 2831-2848.

\bibitem{AR} S. A. Amitsur and A. Regev, {\it PI-algebras and their cocharacters}, J. Algebra {\bf 78}(1) (1982), 248-254.

\bibitem{Belov} A. Ya. Belov, {\it   Rationality of Hilbert series with respect to free algebras}, (Russian) Uspekhi Mat. Nauk {\bf 52} (1997),no. 2(314), 153–154; translation in Russian Math. Surveys 
 {\bf 52} (1997), no. 2, 394–395.

\bibitem{B1} A. Berele, {\it Homogeneous polynomial identities},
Israel J. Math. {\bf 42}(3) (1982), 258-272.

\bibitem{B2} A. Berele, {\it Magnum P.I.}, Israel J. Math. {\bf 51}(1-2) (1985), 13-19.

\bibitem{Berel2} A. Berele, {\it Applications of Belov's theorem to the cocharacter sequence of p.i. algebras}, J. Algebra {\bf 298}(1) (2006), 208-214.

\bibitem{B3} A. Berele, {\it Properties of hook Schur functions with applications to p.i. algebras}, Adv. Appl. Math. {\bf 41}(1) (2008), 52-75.

\bibitem{BR1} A. Berele and A. Regev, {\it Applications of hook Young diagrams to P.I. algebras}, J. Algebra {\bf 82}(2) (1983), 559-567.

\bibitem{BR2} A. Berele and A. Regev, {\it Hook Young diagrams with applications to combinatorics and to representations of Lie superalgebras}, Adv. Math. {\bf 64}(2) (1987), 118-175.

\bibitem{BR3}A. Berele and  A. Regev, {\it Codimensions of products and of intersections of verbally prime $T$-ideals}, Israel J. Math. {\bf 103} (1998), 17-28.

\bibitem{BR4} A. Berele and  A. Regev, {\it Exponential growth for codimensions of some p.i. algebras}, J. Algebra {\bf 241}(1) (2001), 118-145.

\bibitem{BD} S. Boumova and V. Drensky, {\it Cocharacters of polynomial identitities of upper triangular matrices}, J. Algebra Appl. {\bf 11}(1) (2012), 1250018, 24 pp.

\bibitem{CAD} L. Carini and O. M. Di Vincenzo, {\it On the multiplicities of the cocharacters of the tensor square of the Grassmann algebra},
Atti Accad. Peloritana Pericolanti Cl. Sci. Fis. Mat. Natur. {\bf 69} (1991), 237-246.

\bibitem{C} L. Centrone, {\it Ordynary and $\mathbb{Z}_2$-graded cocharacters of $UT_2(E)$}, Comm. Algebra {\bf 39}(7) (2011), 2554-2572.

\bibitem{cenviv1} L. Centrone, V. R. T. da Silva, {\it On $\mathbb{Z}_2$-graded identities of $UT_2(E)$ and their
growth}, Linear Algebra Appl. {\bf 471} (2015), 469-499.

\bibitem{DS} O. M. Di Vincenzo and  V. R. T. da Silva, {\it On $\mathbb{Z}_2$-graded polynomial identities of the Grassmann algebra}, Linear Algebra Appl. {\bf 431}(1-2) (2009), 56-72.

\bibitem{D1} V. Drensky, {\it Representations of the symmetric group and varieties of linear algebras} (Russian), Mat. Sb. {\bf 115(157)}(1) (1981), 98-115.
Translation: Math. USSR Sb. {\bf 43}(1) (1981), 85-101.

\bibitem{D4} V. Drensky, {\it  Codimension of $T$-ideals and Hilbert series of relatively free algebras},  J. Algebra, {\bf 91}(1) (1984), 1-17.

\bibitem{D3} V. Drensky, {\it Extremal varieties of algebras. I, II} (Russian), Serdica {\bf 13}(4) (1987), 320-332; {\bf 14}(1) (1988), 20-27.

\bibitem{D2} V. Drensky, {\it Free Algebras and PI-Algebras. Graduate Course in Algebra}, Springer-Verlag Singapore, 2000.

\bibitem{DG1} V. Drensky and G. K. Genov, {\it Multiplicities of Schur functions in invariants of two $3\times3$ matrices}, J. Algebra {\bf 264}(2) (2003), 496-519.

\bibitem{DK} V. Drensky and B. Kostadinov, {\it Cocharacters of polynomial identities of block triangular matrices}, Comm. Algebra {\bf 45}(5) (2017), 2127-2141.

\bibitem{for1} E. Formanek, {\it Invariants and the ring of generic matrices}, J. Algebra  {\bf 89}(1) (1984), 178-223.

\bibitem{for} E. Formanek, {\it Noncommutative invariant theory}, Contemp. Math. {\bf 43} (1985), 87-119.

\bibitem{gen1} G. K. Genov, {\it The Spechtness of certain varieties of associative algebras over a field
of zero characteristic} (Russian), C. R. Acad. Bulgare Sci. {\bf 29} (1976), 939-941.

\bibitem{gen2} G. K. Genov, {\it Some Specht varieties of associative algebras} (Russian), Pliska Stud. Math. Bulgar. {\bf 2} (1981), 30-40.

\bibitem{GK} A. Giambruno and P. Koshlukov, {\it P. On the identities of the Grassmann algebras in characteristic $p>0$}, Israel J. Math. {\bf 122} (2001), 305-316.

\bibitem{GZ2} A. Giambruno and M. V. Zaicev,{\it On codimension growth of finitely generated associative algbras}, Adv. Math. {\bf 140}(2) (1998), 145-155.

\bibitem{GZ1} A. Giambruno and M. V. Zaicev, {\it Exponential codimension growth of PI algebras: an exact estimate}, Adv. Math. {\bf 142}(2) (1999), 221-243.

\bibitem{GZ} A. Giambruno and M. Zaicev, {\it Polynomial Identities and Asymptotic Methods}, Math. Surveys Monogr. {\bf 122}. AMS, Providence, RI, 2005.

\bibitem{hal} P. Halpin, {\it Some Poincar\'e series related to identities of $2\times2$ matrices},
Pacific J. Math. {\bf 107}(1) (1983), 107-115.

\bibitem{Ke} A. R. Kemer, {\it Finite basis property of identities of associative algebras} (Russian), Algebra Logika {\bf 26}(5) (1987), 597-641.
Translation: Algebra Logic {\bf 26}(5) (197), 362-397.

\bibitem{KR} D. Krakowski and A. Regev, {\it The polynomial identities of the Grassmann algebra}, Trans. Amer. Math Soc. {\bf 181}  (1973), 429-438.

\bibitem{Ly3} V. N. Latyshev, {\it On algebras with identity relations} (Russian),
Dokl. Akad. Nauk SSSR {\bf 146}(5) (1962), 1003-1006.
Translation: Sov. Math., Dokl. {\bf 3} (1962), 1423-1427.

\bibitem{Ly4} V. N. Latyshev, {\it On the choice of basis in a $T$-ideal} (Russian), Sibirsk. Mat. Zh. {\bf 4}(5) (1963), 1122-1127.

\bibitem{Ly} V. N. Latyshev, {\it Partially ordered sets and nonmatrix identities of associative algebras} (Russian), Algebra Logika {\bf 15}(1) (1976), 53-70.
Translation: Algebra Logic {\bf 15}(1) (1976), 34-45.

\bibitem{Ly2} V. N. Latyshev, {\it Finite basis property of identities of certain rings} (Russian), Usp. Mat. Nauk
{\bf 32}(4)(196) (1977), 259-260.

\bibitem{L} J. Lewin, {\it A matrix representation for associative algebras. I}, Trans. Amer. Math. {\bf 188} (1974), 293-308.

\bibitem{M} I. G. Macdonald, {\it Symmetric Functions and Hall Polynomials}, Oxford University Press, Second edition, 1995.

\bibitem{Mi} S. P. Mishchenko, A. Revev and  M. V. Zaicev,
{\it A characterization of P.I. algebras with bounded multiplicities of the cocharacters}, J. Algebra {\bf 219}(1) (1999), 356-368.

\bibitem{OR} J. B. Olsson and A. Regev, {\it Colength sequence of some $T$-ideals}, J. Algebra {\bf 38}(1) (1976), 100-111.

\bibitem{pop} A. P. Popov, {\it On the Specht property of some varieties of associative algebras} (Russian), Pliska Stud. Math. Bulgar. {\bf 2} (1981), 41-53.

\bibitem{pop1} A. P. Popov, {\it Identities of the tensor square of a Grassmann algebra} (Russian), Algebra Logika {\bf 21}(4) (1982), 442-471.
Translation: Algebra Logic {\bf 21} (1982), 296-316.

\bibitem{R1} A. Regev, {\it Existence of identities in $A\otimes B$}, Israel J. Math. {\bf 11} (1972), 131-152.

\bibitem{R}
A. Regev, {\it The representations of $S_n$ and explicit identities of P.I. algebras},
J. Algebra {\bf 51}(1) (1978), 25-40.

\bibitem{VZ}
I.B. Volichenko, A.E. Zalesskii,
{\it Characterization of certain $T$-ideals from the view point of representation theory of the symmetric groups},
Serdica Math. J. {\bf 38} (2012), 211-236.

\end{thebibliography}
\end{document}